\documentclass[11pt,oneside,reqno]{amsart}
\usepackage[utf8]{inputenc}
\usepackage{amssymb}
\usepackage{amsmath}
\usepackage{amsthm}
\usepackage{mathtools}
\usepackage{enumitem}
\usepackage{caption}
\usepackage{hyperref}
\usepackage[a4paper]{geometry}

\newtheorem{theorem}{Theorem}[section]
\newtheorem*{theorem*}{Theorem}
\newtheorem{lemma}[theorem]{Lemma}
\newtheorem{prop}[theorem]{Proposition} 
\newtheorem{cor}[theorem]{Corollary}

\theoremstyle{definition}
\newtheorem{remark}[theorem]{Remark}

\newtheorem{defn}[theorem]{Definition}

\makeatletter
\renewcommand{\section}{\@startsection{section}{1}%
  \z@{2.5ex plus 1ex minus .2ex}{1.3ex plus .2ex}%
  {\normalfont\large\scshape\centering}}
 
\newcommand{\N}{\mathbb{N}}
\newcommand{\Z}{\mathbb{Z}}
\newcommand{\G}{\mathbb{G}}

\newcommand{\QQ}{\mathcal{Q}}
\newcommand{\R}{\mathbb{R}}
\renewcommand{\H}{\mathbb{H}}
\newcommand{\W}{\mathbb{W}}
\newcommand{\V}{\mathbb{V}}
\newcommand{\HH}{\mathcal{H}}
\renewcommand{\L}{\mathcal{L}}

\newcommand{\eps}{\varepsilon}

\DeclareMathOperator{\spann}{span}
\DeclareMathOperator{\diam}{diam}
\DeclareMathOperator{\graph}{graph}
\DeclareMathOperator{\Lie}{Lie}

\makeatletter
\def\l@subsection{\@tocline{2}{0pt}{2.5pc}{5pc}{}}
\makeatother

\title[Unrectifiable surfaces in Carnot groups]{Unrectifiable surfaces with intrinsic regularity in Carnot groups}

\author[V. Magnani]{Valentino Magnani}
\address[Valentino Magnani]{ 
Dipartimento di Matematica, 
Universit\`a di Pisa, 
Largo Bruno Pontecorvo 5, 56127 Pisa, Italy }
\email[Valentino Magnani]{valentino.magnani@unipi.it}

\author[J. Mirmina]{Jeremy Mirmina}
\address[Jeremy Mirmina]{ 
Dipartimento di Matematica, 
Universit\`a di Pisa, 
Largo Bruno Pontecorvo 5, 56127 Pisa, Italy }
\email[Jeremy Mirmina]{jeremy.mirmina@phd.unipi.it, jeremy.mirmina@unitn.it}

\date{}

\subjclass[2020]{Primary 28A75. Secondary 28A78, 22E25, 53C17}

\keywords{Carnot group, stratified group, Heisenberg group,
homogeneous distance, Hausdorff dimension, H\"older continuous function, unrectifiable set, intrinsic regular surface}

\thanks{J.M. and V.M. were supported by the MIUR Excellence Department Project awarded to the Department of Mathematics, University of Pisa, CUP I57G22000700001. V.M. acknowledges the support of PRIN 2022PJ9EFL {\em Geometric Measure Theory: Structure of Singular Measures, Regularity Theory and Applications in the Calculus of Variations}, funded by the European Union--NextGenerationEU, CUP E53D23005860006}

\begin{document}

\begin{abstract}
In any noncommutative Carnot group, for every $0\le \alpha<1$, we show the existence of a $C^{1,\alpha}_\mathrm H$ surface that is unrectifiable with respect to the Euclidean distance. Precisely, the surface has Euclidean Hausdorff dimension higher than its topological dimension. We show that in step-2 Carnot groups the construction gives hypersurfaces of maximal Euclidean Hausdorff dimension, equal to $q-\frac{1+\alpha}{2}$, where $q$ is the topological dimension of the group. Here the range of unrectifiability is sharp, since for $\alpha=1$ the surface is rectifiable in the classical sense. 
In step-2 Carnot groups, it is also possible to prove that the $(q-\frac12)$-dimensional Euclidean Hausdorff measure of every $C^1_\mathrm H$ hypersurface is zero. These results extend the seminal work by Kirchheim and Serra Cassano in the first Heisenberg group.

\end{abstract}

\maketitle

\setcounter{tocdepth}{1}
\tableofcontents

\newpage

\section{Introduction}

Intrinsic regular hypersurfaces appeared at the beginning of geometric measure theory in Carnot groups and play a crucial role in several developments. They represent the ``regular pieces'' that define the natural class of rectifiable sets in Carnot groups. These notions were introduced and applied by Franchi, Serapioni and Serra Cassano, who proved a De Giorgi's rectifiability theorem in Heisenberg groups, \cite{FSSC01}. Further developments in general Carnot groups can be found in \cite{FSSC5,AKLD2009TangSpGro,Marchi2014,Mag31,DonLDMV2022}.

It is not difficult to observe that a measurable subset of a Carnot group that has locally finite perimeter in the classical Euclidean sense is also a set of locally finite perimeter in the Carnot-group sense. This inclusion is also strict in general, as it can be seen in Heisenberg groups, using a countable family of Euclidean rectifiable sets, see \cite[Example~1]{FSSC01}.  The question of whether an intrinsic regular hypersurface may have an actual loss of Euclidean regularity was answered by Kirchheim and Serra Cassano
in the first Heisenberg group.
\begin{theorem}[Kirchheim and Serra Cassano, \cite{KirSer04}]\label{KSC reg}\label{T.KirSer04}
 In the Heisenberg group $\H^1$, there exists a $C^1_{\mathrm{H}}$ regular hypersurface whose Euclidean Hausdorff dimension is $5/2$.
\end{theorem}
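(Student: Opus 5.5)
The plan is to realize the hypersurface as an intrinsic graph over the vertical plane $\W=\{x=0\}$, with the horizontal direction $X$ as transversal. In exponential coordinates $(x,y,t)$ of $\H^1$, a function $\phi:\W\to\R$ defines the set
\[
S=\{(0,y,t)\cdot(\phi(y,t),0,0)\}=\{(\phi(y,t),\,y,\,t-\tfrac12 y\,\phi(y,t))\}.
\]
By the implicit function theorem of Franchi, Serapioni and Serra Cassano, $S$ is $C^1_{\mathrm H}$ provided $\phi$ is continuous and its intrinsic gradient $\nabla^\phi\phi=\partial_y\phi+\phi\,\partial_t\phi$ exists in the distributional/broad sense and is continuous. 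The alternative is to build $S$ directly as a level set $\{f=0\}$ with $f\in C^1_{\mathrm H}$ and $Xf\neq0$.

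Step one is the construction of $\phi$, and I would take $\phi(y,t)=g(t)$ independent of $y$. Then $\nabla^\phi\phi=g\,g'$, which means $\tfrac12(g^2)'$ must be continuous. So I want a continuous $g$ with $g^2\in C^1$ while $g$ itself is only $\tfrac12$-Hölder and very rough. A convenient choice is $g=\sqrt{h}$, where $h\ge0$ is $C^1$ and $\sqrt h$ has a graph of Euclidean dimension $3/2$. For example, take a Weierstrass/Takagi-type $C^{1}$ function $h(t)=\sum_k 2^{-2k}\,\psi(2^{k}t)$ arranged so that $\sqrt{h}$ behaves like a Weierstrass function $\sum_k 2^{-k/2}\cos(2^kt)$ plus a positive offset. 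Cleaner still: take $w$ to be a $\tfrac12$-Hölder function whose graph has dimension $3/2$ (a Weierstrass function $W$ with exponent $\tfrac12$, for which graph dimension $3/2$ is classical by Shen's theorem), set $g=c+W$ with $c$ large so $g>0$, and check whether $g^2\in C^1$. This fails in general, so I really need $h\in C^1$ directly. I would therefore define $g$ through its square, $g^2=h$ with $h'$ continuous, and use $h=1+\sum 2^{-k}\sin(2^k t)\cdot\epsilon_k$ adjusted. The cleanest route is Kirchheim–Serra Cassano's own: choose $h$ with $h'$ continuous but with oscillations near zeros of $h$, where the square root amplifies $\delta$-scale variation $\delta^2$ into $\delta$. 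This yields a $\tfrac12$-Hölder $g$ with oscillation $\approx r^{1/2}$ on every dyadic interval of a Cantor-like set of full dimension.

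Step two is the dimension estimate. The set $S$ contains, for each fixed $y$, a curve $t\mapsto(g(t),y,t-\tfrac12 yg(t))$. Via the bi-Lipschitz map $(x,y,s)\mapsto(x,y,s+\tfrac12yx)$ (smooth, hence locally bi-Lipschitz), $S$ is locally equivalent to $\R\times\graph(g)$. Hence $\dim_E S=1+\dim\graph g$. The upper bound $\le5/2$ follows from the $\tfrac12$-Hölder continuity of $g$ by covering. The lower bound $\ge 3/2$ for $\graph g$ comes from a mass distribution/energy argument: push Lebesgue measure onto the graph and show $\mu(B_r)\lesssim r^{3/2-\eps}$, using that $g$ oscillates by $\gtrsim r^{1/2}$ on many subintervals of length $r$. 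A product formula, $\dim(\R\times A)\ge1+\dim A$, then gives $5/2$.

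The main obstacle is step one combined with the lower bound: I need to simultaneously keep $\partial_t(g^2)$ continuous and force a genuine $r^{1/2}$-oscillation at all scales on a large set. The point is that $(g^2)'$ being continuous only controls $g$ where $g$ is bounded away from zero. My first attempt is a lacunary series $g^2(t)=\sum_k a_k\,\theta(2^{k}t)$ with $a_k 2^{k}\to0$ slowly (so $(g^2)'$ is continuous), and with $g$ vanishing on a dense set so that near each zero the square root produces oscillation $\sqrt{a_k}\approx 2^{-k/2}$ up to subpolynomial factors. If the lower bound proves delicate, I would settle for dimension $5/2$ via $\mu(B_r)\le C r^{3/2}\log$ terms, which still suffices for Hausdorff dimension.
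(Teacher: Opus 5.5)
There is a genuine gap, and it sinks the whole $y$-independent ansatz. No continuous $g$ with $g^2\in C^1$ can have a graph of Hausdorff dimension larger than $1$. On each connected component of the open set $U=\{g\neq0\}$, the continuous function $g$ has constant sign, so there $g=\pm\sqrt{g^2}$ with $g^2>0$, hence $g\in C^1(U)$. Thus $\graph(g|_U)$ is a countable union of $C^1$ arcs, while $\graph(g|_{\{g=0\}})$ lies on a line. By countable stability, $\dim_{\mathrm E}\graph g=1$. Your own bi-Lipschitz reduction to $\R\times\graph g$ then gives $\dim_{\mathrm E}S=2$; in fact $S$ is Euclidean $2$-rectifiable.

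Even granting that $S$ is $C^1_{\mathrm H}$, it is therefore not the desired example. (That direction, graph with continuous intrinsic gradient implies $C^1_{\mathrm H}$, comes from the Ambrosio--Serra Cassano--Vittone characterization, not from the implicit function theorem.) The $r^{1/2}$-oscillations you place near the zeros of $h$ can raise only the box-counting dimension, which is not countably stable. Correspondingly, the Frostman bound $\mu(B_r)\lesssim r^{3/2-\eps}$ for pushed-forward Lebesgue measure fails everywhere. At each point of $\{g\neq0\}$ one has $\mu(B_r)\gtrsim r$ for small $r$, and over $\{g=0\}$ the measure is carried by a line. So the obstacle you flag is structural, not technical: the intrinsic gradient $\phi\,\partial_t\phi$ lets $\phi$ be rough only where $\phi=0$, and there the graph is flat.

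What is missing is a genuinely two-dimensional mechanism, which is the key ingredient of the paper's construction (following Kirchheim--Serra Cassano). One needs a $C^1$ function $g(x^1)$ of the horizontal variables whose gradient equals the field $a_h$ of \eqref{def of a_h} on a closed set $F_g$ of positive area. Since $a_h$ has nonzero curl (Remark~\ref{rem: a_h not closed}), this requires the Alberti--Balogh Lusin-type theorem (Lemma~\ref{lemma:Balogh}). It makes the $C^1$ surface $\{x_h=g(x^1)\}$ characteristic on a set of positive $\HH^2$ measure.

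On $F_g\times T$, the identity $\QQ_h(x,-y)=\langle a_h(x^1),x^1-y^1\rangle$ yields
\[
|x_h-g(x^1)-(y_h-g(y^1))|\lesssim d(x,y)^2(1+|\log d(x,y)|)^K .
\]
Hence $f^*=\psi(x_h-g(x^1))$ satisfies Whitney's condition with zero horizontal gradient, where $\psi$ is a $\frac12$-H\"older function with slightly improved modulus and all level sets of dimension at least $\frac12$. Extending $f^*$ to $\tilde f\in C^1_{\mathrm H}$, the level set $\{\tilde f-x_1=0\}$ contains $\{x\in F_g\times T:\ x_h-g(x^1)\in\psi^{-1}(x_1)\}$. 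The Eilenberg inequality then gives $\dim_{\mathrm E}S\ge\frac52$, and the matching upper bound comes from Balogh--Rickly--Serra Cassano. A $y$-independent $\phi$ cannot reproduce this, because the rough set must have positive area in the horizontal variables, where $\nabla_{\mathrm H}\tilde f$ vanishes.
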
 
Heisenberg groups belong to the larger family of Carnot groups, that are simply connected, nilpotent Lie groups whose Lie algebra admits a stratification.
They were introduced by Folland under the name of ``stratified groups'', in the setting of linear subelliptic PDEs, \cite{Fol75}, see also \cite{BLU07Stratified}.
Subsequent developments of geometric measure theory in these groups are collected in \cite{SerraCassano2016}.
In Carnot groups, $C^1_\mathrm H$ regular hypersurfaces correspond to intrinsic regular hypersurfaces, or $\G$-regular hypersurfaces. In broad terms, these are the level sets of $C^1_\mathrm H$ functions with nonvanishing horizontal gradient. 
Notice that a $C^1_\mathrm H$ function need not be differentiable in Euclidean sense also on a set of positive measure, \cite[Section~4]{Mag6}, hence it can be highly irregular. 
We will be mainly interested in $C^{1,\alpha}_\mathrm H$ hypersurfaces, that are defined in analogous way,
where the defining function has an $\alpha$-H\"older continuous horizontal gradient (Definition~\ref{def: C^1,alpha function} and 
Definition~\ref{def: C^1,alpha hypersurface}).
 
We are interested in understanding Theorem~\ref{T.KirSer04} along three directions.
One concerns the role played by the group structure in the
construction of singularities. The other is the behaviour of Euclidean 
singularities as intrinsic regularity increases. 
The last one pertains to the sharpness of our estimates.
In sum, we wish to construct $C^{1,\alpha}_\mathrm H$ surfaces of a Carnot group, which have large Euclidean Hausdorff dimension (this should be read as the ``higher singularity'' allowed).

The first and second directions are contained in our first result.

\begin{theorem}\label{main theor codim k}
    Let $\G$ be a noncommutative Carnot group of topological dimension $q$, let $0\le \alpha<1$ and $1\leq k\leq m$. Then, there exists a $C^{1,\alpha}_{\mathrm{H}}$ co-horizontal surface $S$ of codimension $k$ that is not Euclidean $(q-k)$-rectifiable. More precisely, it satisfies 
    $\dim_\mathrm ES\geq q-k+\frac{1-\alpha}{2}$.
\end{theorem}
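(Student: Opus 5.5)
The plan is to reduce the construction to an intrinsic graph over a suitable subgroup and to plant a Weierstrass-type Hölder singularity in a vertical direction. Since $\G$ is noncommutative, we can pick horizontal vectors $X_1,X_2$ in the first layer $V_1$ with $[X_1,X_2]=T\neq 0$ in $V_2$. Using exponential coordinates adapted to a graded basis, $T$ is one of the basis vectors of $V_2$, say with coordinate $t$. Then choose $k$ linearly independent horizontal directions $X_1,\dots,X_k$, with $X_1$ the one above, or some $X_j$ whose bracket with a horizontal vector outside their span gives $T$; when $k=m$ the relevant horizontal partner lies inside the span, which is fine. The surface $S$ will be the zero set of a map $f=(f_1,\dots,f_k)\colon\G\to\R^k$ with $f_j(x)=x_j-\phi_j(x)$, where the $\phi_j$ are chosen so that each $X_i\phi_j$ is $\alpha$-Hölder with respect to the homogeneous distance. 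The horizontal Jacobian $(X_if_j)_{i,j\le k}$ is then close to the identity, so $f$ is a $C^{1,\alpha}_\mathrm H$ submersion and $S=f^{-1}(0)$ is a $C^{1,\alpha}_\mathrm H$ co-horizontal surface of codimension $k$ in the sense of Definition~\ref{def: C^1,alpha hypersurface}.

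For the singular profile take $\phi_1(x)=g(t)$ for a function $g$ of the single vertical coordinate $t$ in $V_2$, and $\phi_j=0$ for $j\ge2$. Choose $g$ of Weierstrass type,
\[
g(t)=\sum_{n\ge1} \lambda^{-n\frac{1+\alpha}{2}}\cos(\lambda^n t),
\]
so that $g\in C^{\frac{1+\alpha}{2}}$ is not better, with oscillation of order $r^{\frac{1+\alpha}{2}}$ on every interval of length $r$. The horizontal derivatives of $g(t)$ are $(X_i t)\,g'(t)$, which is problematic because $g'$ does not exist. So instead I would smooth this idea in the standard Kirchheim--Serra Cassano fashion: define $\phi_1$ as the intrinsic graph function solving a Burgers-type equation, or more simply use a lacunary sum of functions $\lambda^{-n(1+\alpha)}\psi(\delta_{\lambda^n}x)$ with $\psi$ smooth and depending genuinely on $t$. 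Then $X_i\phi_1=\sum_n\lambda^{-n\alpha}(X_i\psi)(\delta_{\lambda^n}x)$ is $\alpha$-Hölder by homogeneity of dilations, and the usual lacunary estimate applies. In the $t$-direction, each term contributes oscillation $\lambda^{-n(1+\alpha)}$ at Euclidean scale $\lambda^{-2n}$, i.e. a Euclidean Hölder exponent $\frac{1+\alpha}{2}$ in $t$, and this is sharp from below on every interval.

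Finally, $S$ is a Euclidean graph $x_1=\phi_1(x_2,\dots,x_q)$ (with the other $x_j=0$) of a function that is $\frac{1+\alpha}{2}$-Hölder and has oscillation $\gtrsim r^{\frac{1+\alpha}{2}}$ in the $t$ variable on every cube of side $r$. The standard box-counting plus mass distribution argument for graphs of such functions gives
\[
\dim_\mathrm E S\ge (q-k)+1-\tfrac{1+\alpha}{2}=q-k+\tfrac{1-\alpha}{2}.
\]
Concretely, push forward Lebesgue measure on the domain to the graph and estimate the measure of Euclidean balls of radius $r$ by $r^{q-k}\cdot r/r^{\frac{1+\alpha}{2}}$ via the lower oscillation bound, then apply Frostman's lemma. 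Since $\frac{1-\alpha}{2}>0$, $S$ cannot be $(q-k)$-rectifiable.

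The main obstacle is the lower oscillation estimate. One must make sure that, along the $t$-direction and uniformly in the remaining coordinates, the lacunary terms do not cancel, i.e. the $n$-th term dominates the tail and the earlier terms at scale $\lambda^{-2n}$. It also has to be arranged that the group-law polynomial corrections in the dilated $\psi(\delta_{\lambda^n}x)$ do not destroy this. I would first try taking $\psi$ depending only on $t$ on a fixed box, periodized, with $\lambda$ a large integer, so that the estimate reduces to the one-variable Weierstrass computation.
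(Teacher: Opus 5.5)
Your proposal has a genuine gap in each of its two main steps, and the decisive one is the dimension bound rather than the non-cancellation issue you single out. Pushing Lebesgue measure forward to the graph and bounding $\mu(B(p,r))\lesssim r^{q-k}\cdot r^{1-\frac{1+\alpha}{2}}$ needs a non-concentration (occupation-measure) estimate. In each domain ball of radius $r$, the set where the graph function stays within $r$ of a given value must have small measure, and a lower bound on the oscillation says nothing about this. Two-sided oscillation bounds of order $r^{\gamma}$ only give the \emph{box-counting} dimension $2-\gamma$ of a graph. The Hausdorff dimension can be strictly smaller; this already happens for self-affine graphs of McMullen type with non-uniform row counts. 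For the classical Weierstrass function, the equality between the Hausdorff dimension of its graph and $2-\gamma$ remained open for decades. The paper sidesteps this entirely. It uses the function of Theorem~\ref{thm: rajala holder+}, for which \emph{every} level set has dimension at least $1-\frac{1+\alpha}{2}$, proved via a Frostman measure built on the level set itself. Then $S$ contains, over a base $F_{g,a}\times K$ of positive $\mathcal L^{q-k}$ measure, fibres that are translates of level sets of $\psi$. The Eilenberg inequality (Theorem~\ref{lemma:Eilenberg}) then gives $\HH^{q-k+\frac{1-\alpha}{2}-\eps}(S)=\infty$. Without a fibrewise statement of this kind, your argument yields only a box-dimension bound, not the Hausdorff bound claimed in the theorem.

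The regularity step also fails for the choice you propose to try first. If $\psi$ depends only on $t=x_h$, then $X_i\psi=a_{ih}(x^1)\psi'(x_h)$ and $a_{ih}(\delta_{\lambda^n}x)=\lambda^n a_{ih}(x^1)$. Hence $X_i\phi_1=a_{ih}(x^1)\sum_n\lambda^{n(1-\alpha)}\psi'(\lambda^{2n}x_h)$, which diverges wherever $a_h(x^1)\neq0$. This is exactly the obstruction you noticed for $g(t)$, and neither lacunarity nor periodization in $t$ removes it. The obstruction is structural: the horizontal gradient of $x_h-g(x^1)$ is $a_h(x^1)-\nabla g(x^1)$. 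That cannot vanish on an open set because $a_h$ is not curl-free (Remark~\ref{rem: a_h not closed}), so a rough profile cannot be composed with it on an open set. The paper obtains horizontal flatness only on the compact set $F_g\times T$ where $\nabla g=a_h$ (Lemma~\ref{lemma:Balogh}). It then produces a global $C^{1,\alpha}_{\mathrm H}$ function by Whitney extension (Lemma~\ref{lem: extension}), and the factor $e^{-M|\log r|^\beta}$ in Lemma~\ref{lemma:Rajala-alpha} is there to absorb the logarithmic loss in Balogh's estimate. If you instead let $\psi$ depend on all coordinates with $X_i\psi$ globally bounded and Lipschitz (e.g.\ lattice-invariant, when $\G$ admits a lattice), the series does converge in $C^{1,\alpha}_{\mathrm H}$ for $\alpha>0$, though not for $\alpha=0$. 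But then $S$ is only an intrinsic graph of an implicitly defined function. The reduction to a one-variable Weierstrass computation is lost, and you face the Hausdorff-dimension problem above in an even harder form.
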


In the sequel, the symbol $q$ denotes the topological dimension of $\G$ and $\dim_\mathrm E$ denotes the
Hausdorff dimension with respect to the Euclidean distance. $C^{1,\alpha}_H$ co-horizontal surfaces, also known as co-abelian intrinsic submanifolds, are regular intersections of $C^{1,\alpha}_H$ hypersurfaces (Definition~\ref{def:c1alphak}).
To construct Euclidean singularities on $C^{1,\alpha}_H$ hypersurfaces, the approach by Kirchheim and Serra Cassano can still be used in Carnot groups,
making Theorem~\ref{main theor codim k} new also in Heisenberg groups when $0<\alpha<1$. 

Let us outline some ideas about the proof of Theorem~\ref{main theor codim k}. In the $m$-dimensional cube $Q$ of unit side length, it is possible to find a subset $F_g\subset Q$ of large measure on which a $C^1$ function $g\colon Q\to\R$ has gradient equal to a vector field with nonvanishing
curl, \cite{Alberti91,Balogh03}. 
The vector field arises from the group operation, according to \eqref{def of a_h} and its nonvanishing curl
is precisely characterized by the Lie bracket generating
condition of $V_1$, as shown in Remark~\ref{rem: a_h not closed}.
The function $g$ can be constantly extended to a function $G$ on
$Q\times \R^{q-m-1}$.
As a result, the set of characteristic points of the graph of $G$ precisely contains the 
restriction of this graph to $F_g\times\R^{q-m-1}$. 
By the standard area formula, this set has positive $(q-1)$-dimensional Hausdorff measure, 
see Remark~\ref{rem:idea_sing_hyp}.
Then at each characteristic point we 
attach a level set 
of a highly irregular $\frac{1+\alpha}{2}$-H\"older function $\psi$,
along a vertical direction of degree $2$. The resulting set has 
an increased Euclidean Hausdorff dimension.
Combining the H\"older continuity of $\psi$, 
the special relation between the group operation and the associated left-invariant vector fields, the prescribed gradient of $g$,
and a suitable Whitney-type extension theorem,
this set can be extended to a Euclidean singular $C^{1,\alpha}_\mathrm H$ hypersurface.
Taking iterated intersections of the singular hypersurface with suitable hyperplanes, we obtain the higher codimensional $C^{1,\alpha}_\mathrm H$ surface with the Euclidean singularity as in the conclusion of Theorem~\ref{main theor codim k}.
Proving a version of this theorem in the strictly larger class 
of positively graded groups becomes a different and easier
problem, as we briefly discuss in Section~\ref{sec:graded}.

The existence of the H\"older continuous function $\psi$ above is an interesting problem in itself, which is also connected to a question raised 
by Kirchheim and Serra Cassano in \cite{KirSer04}, subsequently answered by Rajala in \cite{Rajala2026pr}. 
He constructs an $\alpha$-H\"older continuous function whose level sets all have positive $(1-\alpha)$-dimensional Hausdorff measure.
A slight modification of the construction for $\alpha=1/2$ yields an improved modulus of continuity with respect to $|x-y|^{1/2}$, 
still keeping all the level sets with Hausdorff dimension at least
$1/2$, see \cite[Lemma~3.5]{KirSer04}, or \cite[Proposition~2.6] {Rajala2026pr}. 
In our setting, we precisely need an improved modulus of continuity with respect to $|x-y|^\alpha$ for every $1/2<\alpha<1$, in order to apply the proper Whitney-type extension theorem. 
We follow Rajala's construction to obtain this modified statement,
see Section~\ref{section: app} for more details.

Surfaces of $C^{1,\alpha}_{\mathrm H}$ smoothness, with different definitions, have already been studied in other contexts, with various interesting results. It can be proved that submanifolds of such regularity are also Lipschitz rectifiable with respect to vertical (non-Euclidean) subgroups, \cite{DFO2022},
and can be characterized by suitable tangent $\alpha$-paraboloids, \cite{IduMai2024}.
In addition, boundedness of singular integral operators has been also studied on suitable classes of $C^{1,\alpha}_{\mathrm H}$ surfaces, \cite{CFO2019bound,CFO2025corrigendum,ChoLiZha2025}.
Theorem~\ref{main theor codim k} somehow confirms 
that the study of $C^{1,\alpha}_{\mathrm H}$ surfaces is genuinely non-Euclidean, in the sense that a purely Euclidean approach cannot be applied. 

Theorem~\ref{main theor codim k} for $k>1$ apparently provides the first examples of higher-codimension unrectifiable co-horizontal surfaces in arbitrary non-commutative Carnot groups. We notice that for $k=2$ in the first Heisenberg group $\H^1$, vertical curves are included. Other interesting constructions of irregular vertical curves are also available in Heisenberg groups. For instance, the $\lambda$-vertical curves recently introduced by Antonelli and Young can have different Hausdorff dimensions with respect to the distance of the group, according to \cite[Theorem~1.4]{AntYou2026}.

The third and last direction of our investigation studies whether
the lower estimates of Theorem~\ref{main theor codim k} can be 
further improved. As noted in \cite[Remark~3.3]{KirSer04}, the Euclidean Hausdorff dimension of the $C^1_{\mathrm H}$ surface in Theorem~\ref{KSC reg} is maximal, as a consequence of \cite[Theorem~1.1]{BalRicSer2003}.
An analogous observation shows the maximality
of the lower estimates in Theorem~\ref{main theor codim k} when $\G$ is a step-2 Carnot group, $\alpha=0$ and $S$ has codimension one. In other words, we obtain the equality $\dim_E S=q-\frac{1}{2}$.
This is a direct consequence of the Hausdorff dimension comparison estimates of \cite[Theorem~2.4]{BTW09}.  
These results do not rely on the $C^1_{\mathrm{H}}$ regularity of $S$.
For this reason, obtaining the upper estimates on the Euclidean Hausdorff dimension of $C^{1,\alpha}_{\mathrm H}$ hypersurfaces requires an additional method, that takes into account such intrinsic regularity. 

We are led to the second main result of the paper.

\begin{theorem}\label{main theorem upper estimate}
Let $\G$ be a Carnot group of step $2$ and let $0\leq\alpha\leq1$. Then every $C^{1,\alpha}_{\mathrm{H}}$ hypersurface $S$ satisfies $\dim_\mathrm E S\leq q-\frac{1+\alpha}{2}.$
\end{theorem}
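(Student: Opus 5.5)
The statement is local, so we fix a point of $S$ and work in a neighbourhood where $S=\{f=0\}$ with $f\in C^{1,\alpha}_{\mathrm H}$ and $\nabla_{\mathrm H}f\neq0$. Countable stability of Hausdorff dimension then gives the global bound. After a rotation of the first layer we may assume $X_1f\ge c>0$ there. By the implicit function theorem for $C^1_{\mathrm H}$ functions, $S$ is locally an intrinsic graph $\{w\cdot \phi(w)e_1 : w\in \W\}$ over the vertical hyperplane $\W=\{x_1=0\}$. We use exponential coordinates $x=(x_1,x',y)$, with $x'\in\R^{m-1}$ horizontal and $y\in\R^{q-m}$ of degree $2$.

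The plan is a covering argument. Fix a small $r>0$ and cover the compact piece of $\W$ by intrinsic boxes: horizontal side $r$ in $x'$, and side $r^2$ in each vertical coordinate $y$. There are about $r^{-(m-1)}r^{-2(q-m)}$ such boxes. The main estimate to prove is the following claim.
\begin{quote}
\emph{Over each such box, the piece of $S$ is contained in a Euclidean ball, or a union of boundedly many Euclidean cubes, of side $Cr^{1+\alpha}$.}
\end{quote}
Cover each piece by cubes of side $r^{1+\alpha}$. Each box is $r\times r^2$, so it needs about $r^{-\alpha(m-1)}\cdot r^{(1-\alpha)(q-m)}$ Euclidean cubes of side $r^{1+\alpha}$. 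Counting, $\HH^s_{\mathrm E}$ of the piece is bounded by
\[
r^{-(m-1)-2(q-m)}\, r^{-\alpha(m-1)+(1-\alpha)(q-m)}\,\bigl(r^{1+\alpha}\bigr)^{s}.
\]
The exponent of $r$ is $-(1+\alpha)(m-1)-(1+\alpha)(q-m)+(1+\alpha)s$, which is $\ge0$ exactly when $s\ge q-1$. That is too good, so the bookkeeping must be redone honestly.

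The honest version is as follows. In the $x_1$ direction, $S$ over an $x'$-interval of length $r$ varies by roughly $r$, because $\phi$ is intrinsically Lipschitz. In the vertical directions the graph map moves points by $\phi\,x'$-type bracket terms of size about $r$, while the box has height $r^2$. The correct claim should therefore be this: using $X_1f\ge c$ and the $\alpha$-H\"older continuity of $\nabla_{\mathrm H}f$, the intrinsic Taylor expansion
\[
f(p\cdot\delta_r v)=f(p)+r\langle\nabla_{\mathrm H}f(p),\pi_1v\rangle+O(r^{1+\alpha})
\]
places the portion of $S$ in a $Cr\times\cdots$ box in $x$ and within a slab of Euclidean thickness $O(r^{1+\alpha})$ in the direction normal to the plane $p\cdot\ker\nabla_{\mathrm H}f(p)$.

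So cover a bounded piece of $S$ by about $r^{-Q+1}$ homogeneous balls $B(p_j,r)$, where $Q=m+2(q-m)$ and $S$ has $\HH^{Q-1}$-locally finite measure (Ahlfors regularity of $C^1_{\mathrm H}$ hypersurfaces). Each ball is Euclidean-comparable to a box of size $r^{m}\times (r^2)^{q-m}$. The slab estimate cuts one horizontal direction down to $r^{1+\alpha}$, leaving a region of size $r^{m-1}\times r^{1+\alpha}\times(r^2)^{q-m}$. Cover this region with cubes of side $r^2$; the number needed is about $r^{-(m-1)}r^{\alpha-1}$. Each has diameter $r^2$, so
\[
\HH^s_{\mathrm E,r^2}(S)\lesssim r^{-Q+1}\,r^{-(m-1)}\,r^{\alpha-1}\,r^{2s}.
\]
The exponent is $2s-2q+m-1+m-m+1+\alpha-1=2s-2q+1+\alpha$ after substituting $Q$. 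It vanishes exactly at $s=q-\frac{1+\alpha}{2}$, so $\HH^s_{\mathrm E}(S)<\infty$ for this $s$, which gives the bound. For $\alpha=1$ this correctly recovers $q-1$.

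The main obstacle is the slab estimate. We have to check that the group translation by $p_j$, which is polynomial of degree $2$ with a linear term in the vertical coordinates, maps the intrinsic $r^{1+\alpha}$-thin region to a set coverable by the claimed number of Euclidean $r^2$-cubes. The thin direction must stay horizontal and must not tilt into the vertical layer in a way that spoils the count. We plan to use the explicit step-2 group law $p\cdot v=(p_1+v_1,\,p_2+v_2+\tfrac12[p_1,v_1])$. This map is affine in $v$, so it sends slabs to slabs and boxes to parallelepipeds of the same size up to constants uniform on compacts. That is where the step-$2$ hypothesis enters.
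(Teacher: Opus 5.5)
Your final argument is correct once the first quoted claim is deleted. That claim is false: over an $r\times r^2$ box of $\W$ the graph still has horizontal extent of order $r$, which is why your first count came out too good. The surviving argument follows a genuinely different route from the paper.

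The paper splits into cases. It handles $\alpha=0$ by the comparison estimates of \cite{BTW09} and $\alpha=1$ by Corollary~\ref{cor alpha=1}. For $0<\alpha<1$ it first proves that the intrinsic graphing map $\phi$ is Euclidean $\frac{1+\alpha}{2}$-H\"older (Theorem~\ref{intrinsic parametrization is Holder}). It then foliates a neighbourhood by the right translates $S_t=\Psi(\{t\}\times U)$ and computes the leaf index $\tau(x)=x_1-\tilde\phi(x)$ explicitly from the step-$2$ law. The Eilenberg inequality gives $\HH^{q-\frac{1+\alpha}{2}}(\tau^{-1}(t))<\infty$ for a.e.\ $t$, and this transfers to $S$ because right translations are Euclidean bi-Lipschitz.

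You instead box-count directly, in three steps.
\begin{enumerate}
\item Cover a compact piece $S\cap K$ by $\lesssim r^{1-Q}$ homogeneous balls centred on $S$.
\item Use the $C^{1,\alpha}_{\mathrm H}$ Taylor formula to confine $S\cap B(p,r)$ to $p\cdot R_p$. Here $R_p$ is a box of sides $r^{m-1}\times r^{1+\alpha}\times(r^2)^{q-m}$ in the $v$-variables.
\item Cover each $p\cdot R_p$ by $\lesssim r^{1-m}r^{\alpha-1}$ sets of Euclidean diameter $\lesssim r^2$.
\end{enumerate}

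Your route buys several things. It treats all $\alpha\in[0,1]$ at once and needs neither \cite{BTW09}, nor the H\"older regularity of $\phi$, nor the Eilenberg inequality. It bounds the upper Minkowski dimension, which is stronger than the Hausdorff bound. For $\alpha=0$, write $\omega$ for the modulus of continuity of $\nabla_{\mathrm H}f$. The uniform remainder $Cr\,\omega(br)$ then gives $\HH^{q-\frac12}_{Cr^2}(S\cap K)\lesssim\max\{r,\omega(br)\}\to0$. That is Corollary~\ref{cor:Hq-1/2=0}, obtained without the little-H\"older results of \cite{DiD21,ADDDLD24}.

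The paper's route in turn produces structure yours does not: the Euclidean $\frac{1+\alpha}{2}$-H\"older regularity of the graphing map, which is what yields Corollary~\ref{cor:Holdermanifold}.

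Two points need tightening.

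First, your ``main obstacle'' is not one. Cover $R_p$ by $r^2$-cubes in the $v$-variables and push forward by $v\mapsto p\cdot v=p+v+\tfrac12[p^1,v^1]$. This map is affine with linear part of norm at most $1+C|p|$, so it is Euclidean Lipschitz uniformly for $p$ in a compact set. Note that $B(p,r)$ itself is a sheared box of vertical extent $\sim r$ when $p^1\neq0$, so ``parallelepipeds of the same size'' is inaccurate. Only the Lipschitz bound is needed.

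Second, the count $\lesssim r^{1-Q}$ needs the lower Ahlfors bound $\HH^{Q-1}_d(S\cap B(p,r))\ge c\,r^{Q-1}$, not just local finiteness. To avoid citing it, let $L$ be a local Lipschitz constant of $f$ with respect to $d$, and pack the disjoint balls $B(p_j,r/2)$ into $\{|f|\le Lr\}$. This set has Lebesgue measure $O(r)$ locally, by Fubini along the lines $t\mapsto z\cdot te_1$. The map $(z,t)\mapsto z\cdot te_1$ has unit Jacobian, and on these lines $\frac{d}{dt}f(z\cdot te_1)=X_1f(z\cdot te_1)\ge c$.
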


The threshold $\alpha = 1$ actually has a stronger result: if $\G$ is of step $2$ and $S$ is a
$C^{1,1}_\mathrm H$ regular hypersurface, then $S$ is $(q-1)$-rectifiable in the classical sense, see Corollary~\ref{cor alpha=1}.
In other words, for $\alpha=1$ Euclidean singularities cannot be created and the equality $\dim_{\rm E}S=q-1$ always holds.
The equality $\dim_{\rm E}S=q-\frac{1+\alpha}{2}$ continues to hold for any highly irregular hypersurface $S$ constructed in Theorem~\ref{main theor codim k} and 
any $0\le \alpha<1$. Indeed, 
as a consequence of Theorem~\ref{main theorem upper estimate}, $S$  achieves the maximal Euclidean Hausdorff dimension allowed by its $C^{1,\alpha}_{\mathrm{H}}$ regularity, according to the following.
\begin{cor}
Let $\G$ be a noncommutative Carnot group of step $2$. For each $0\leq\alpha\leq1$ there exists a $C^{1,\alpha}_{\mathrm{H}}$ hypersurface $S$ such that $\dim_\mathrm E S= q-\frac{1+\alpha}{2}.$
\end{cor}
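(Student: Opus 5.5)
The corollary follows by combining Theorem~\ref{main theor codim k} with $k=1$ and Theorem~\ref{main theorem upper estimate}, split into the cases $0\le\alpha<1$ and $\alpha=1$.

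\emph{Case $0\le\alpha<1$.} Theorem~\ref{main theor codim k} with $k=1$ gives a $C^{1,\alpha}_{\mathrm H}$ hypersurface $S$, but only with the lower bound $\dim_\mathrm E S\ge q-1+\frac{1-\alpha}{2}=q-\frac{1+\alpha}{2}$. This is exactly the upper bound of Theorem~\ref{main theorem upper estimate}, so one would like to read off equality. Note, however, that Theorem~\ref{main theor codim k} applies to arbitrary noncommutative Carnot groups and therefore cannot use any step-2 information. For the equality I would instead revisit the construction outlined in the introduction. There the extra dimension comes from attaching, at each characteristic point of the graph of $G$ over $F_g\times\R^{q-m-1}$, a level set of the $\frac{1+\alpha}{2}$-H\"older function $\psi$ along a vertical direction of degree $2$. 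Since $F_g\times\R^{q-m-1}$ has positive $(q-1)$-dimensional measure and these level sets have dimension at least $1-\frac{1+\alpha}{2}=\frac{1-\alpha}{2}$, one expects $\dim_\mathrm E S\ge q-1+\frac{1-\alpha}{2}$.

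The point to check is that this lower bound genuinely reaches $q-\frac{1+\alpha}{2}$ in step $2$, and does not give something smaller such as $\frac{1-\alpha}{2}$ times a weight. In a step-2 group every vertical direction has degree exactly $2$, so the H\"older exponent $\frac{1+\alpha}{2}$ is precisely what matches the $C^{1,\alpha}_{\mathrm H}$ scaling. With the lower bound $\dim_\mathrm E S\ge q-\frac{1+\alpha}{2}$ in hand, Theorem~\ref{main theorem upper estimate} applied to this $S$ gives $\dim_\mathrm E S\le q-\frac{1+\alpha}{2}$, hence equality.

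\emph{Case $\alpha=1$.} Here $q-\frac{1+\alpha}{2}=q-1$, so any $C^{1,1}_{\mathrm H}$ hypersurface works. The simplest choice is a horizontal hyperplane, for instance $S=\{x_1=0\}$ in exponential coordinates, the zero set of $f(x)=x_1$. Its horizontal gradient is constant and nonzero, so $f$ is $C^{1,1}_{\mathrm H}$. As a smooth Euclidean hypersurface, $S$ has $\dim_\mathrm E S=q-1$. This can also be obtained from Corollary~\ref{cor alpha=1}.

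The main obstacle is therefore not this corollary itself but making sure the dimension count in the step-2 specialization of Theorem~\ref{main theor codim k} is the sharp value $q-\frac{1+\alpha}{2}$.
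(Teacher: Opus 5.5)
Your argument is exactly the paper's: Theorem~\ref{main theor codim k} with $k=1$ gives $\dim_\mathrm E S\ge q-1+\frac{1-\alpha}{2}=q-\frac{1+\alpha}{2}$ for $0\le\alpha<1$, Theorem~\ref{main theorem upper estimate} gives the reverse inequality, and for $\alpha=1$ any $C^{1,1}_\mathrm H$ hypersurface has dimension $q-1$. Your hyperplane $\{x_1=0\}$ is fine for $\alpha=1$, though it is usually called vertical rather than horizontal, since it contains all vertical directions. The hesitation in the middle is unfounded: the lower bound of Theorem~\ref{main theor codim k} holds verbatim in every noncommutative Carnot group, step $2$ included, so there is no separate step-$2$ dimension count to verify and no remaining obstacle.
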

The geometric idea to prove Theorem~\ref{main theorem upper estimate} is to perform a local foliation of the ambient space in a small neighborhood of a fixed point in $S$.
We use the local intrinsic-graph structure of $S$ 
provided by the implicit function theorem, \cite{FSSC03}, 
along with an extra Euclidean regularity of the implicit function,
proved in Theorem~\ref{intrinsic parametrization is Holder}.
More precisely, the $C^{1,\alpha}_{\mathrm H}$ regularity of the defining function implies that the implicit function is $\frac{1+\alpha}{2}$-H\"older continuous with respect to the Euclidean distance.
Other related regularity results and their applications can be found in
\cite{CFO2019bound,DFO2022,CFO2025corrigendum,ChoLiZha2025}.

The foliation is obtained by right-translating small pieces of $S$ along a horizontal vector where the directional derivative of the defining function does not vanish. We show that this one-parameter foliation map is 
locally invertible and regular, where the right translated
leaves $S_t$ are all locally bi-Lipschitz equivalent.
Then the standard dimensional estimates for a.e. level sets of H\"older continuous functions are sufficient to obtain the upper bound $q-\frac{1+\alpha}{2}$ on the Hausdorff dimension of the leaf $S_0=S$. 
The geometric picture and the argument of this approach are inspired 
by the proof of \cite[Theorem 2.16]{Mag5}, under different regularity assumptions. 

The previous results also leave several interesting open questions for future research. One of them is to find the maximal Euclidean Hausdorff dimension for $C^{1,\alpha}_{\mathrm H}$ hypersurfaces in higher step Carnot groups, possibly also for higher codimensional co-horizontal $C^{1,\alpha}_{\mathrm H}$ surfaces.
Another problem is whether one can construct a $C^{1,\alpha}_{\mathrm{H}}$ hypersurface $S$ in a step-$2$ Carnot group with
$\HH^{q-\frac{1+\alpha}{2}}(S)>0$.
This question has an answer for $\alpha=0$, that can be
obtained combining the method of Theorem~\ref{main theorem upper estimate} and the $\frac12$-little H\"older continuity of the graphing function in step-2 Carnot groups, see \cite{DiD21} or \cite{ADDDLD24}. 

\begin{cor}
If $\G$ is a Carnot group of step $2$ and $S\subset\G$ is a $C^1_{\mathrm{H}}$ hypersurface, then $\HH^{q-\frac{1}{2}}(S)=0.$
\end{cor}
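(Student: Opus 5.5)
We prove the corollary by the foliation method of Theorem~\ref{main theorem upper estimate}, replacing the $\frac12$-H\"older estimate of the implicit function with the $\frac12$-little H\"older continuity of the graphing function in step-2 groups \cite{DiD21,ADDDLD24}. The question is local, so it suffices to show that a small neighborhood $U\cap S$ of any fixed point $p\in S$ has $\HH^{q-\frac12}(U\cap S)=0$; $S$ is then covered by countably many such neighborhoods. Near $p$, the implicit function theorem \cite{FSSC03} writes $S$ as an intrinsic graph $\{w\cdot \phi(w)X\}$ over a vertical hyperplane $\W$, where $X$ is a horizontal direction with $Xf(p)\neq0$.

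In exponential coordinates, the graph map $\Phi(w)=w\cdot\phi(w)X$ has components of degree $2$ that are polynomial in $w$ and $\phi(w)$. The little H\"older property of $\phi$ should therefore transfer to a Euclidean statement: on a compact set $K\subset\W$ we have $|\Phi(w)-\Phi(w')|\le \omega(|w-w'|)\,|w-w'|^{1/2}$ with $\omega(r)\to0$. This refines Theorem~\ref{intrinsic parametrization is Holder} for $\alpha=0$, where only $\omega$ bounded is obtained.

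Next comes the foliation. As in the proof of Theorem~\ref{main theorem upper estimate}, set $S_t=S\cdot \exp(tX)$ near $p$. The map $(t,s)\mapsto s\cdot\exp(tX)$ is locally invertible, so a neighborhood $V$ of $p$ is foliated by the leaves $S_t$, $t\in(-\delta,\delta)$. Its inverse gives a function $T\colon V\to\R$ with $S_t=T^{-1}(t)$. The proof of Theorem~\ref{main theorem upper estimate} shows that $T$ is Euclidean $\frac12$-H\"older, essentially because $T$ is controlled by the graph function $\phi$. We would upgrade this using the previous step: $T$ is \emph{little} $\frac12$-H\"older, meaning $|T(x)-T(y)|\le\omega(|x-y|)|x-y|^{1/2}$. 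The leaves are locally bi-Lipschitz equivalent via right translations, which are smooth diffeomorphisms. So $\HH^{q-\frac12}(S_t\cap V')$ is comparable, up to fixed constants, to $\HH^{q-\frac12}(S\cap V'')$ for all small $t$.

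Now we use a coarea-type (Eilenberg) inequality for little H\"older maps. Cover $V$ by Euclidean cubes of side $r$. About $r^{-q}$ cubes are needed, and the image of each cube under $T$ has diameter at most $\omega(\sqrt q\,r)r^{1/2}$. The Eilenberg inequality at scale $r$ gives
\[
\int_{-\delta}^{\delta}\HH^{q-\frac12}_{r}(T^{-1}(t))\,dt\lesssim \sum_{\text{cubes}} r^{q-\frac12}\,\omega(r)r^{1/2}\lesssim \omega(r)\to0 .
\]
By Fatou's lemma, $\HH^{q-\frac12}(S_t\cap V)=0$ for a.e.\ $t$. The bi-Lipschitz equivalence of the leaves transfers this to $t=0$, so $\HH^{q-\frac12}(S\cap V'')=0$. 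For a general $C^{1,\alpha}_{\mathrm H}$ hypersurface, the same computation with only a H\"older bound gives finiteness rather than zero; the little-o gain is what produces vanishing.

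The main obstacle is the first step: converting the intrinsic little H\"older continuity of $\phi$ (\cite{DiD21,ADDDLD24}) into a Euclidean little H\"older modulus for $T$, uniformly on compact sets. The first thing to try is to rerun the argument of Theorem~\ref{intrinsic parametrization is Holder}. In that argument, the $\frac12$-H\"older constant of the degree-$2$ components comes from the intrinsic H\"older quotient of $\phi$ along vertical displacements, which is exactly the quantity that is $o(1)$ in the little H\"older setting. The horizontal components are Lipschitz, so they contribute $O(|x-y|)=o(|x-y|^{1/2})$.
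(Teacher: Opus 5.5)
Your proposal is correct and is essentially the paper's proof. The steps match: $S$ is written locally as an intrinsic graph with $\frac12$-little H\"older $\phi$; a neighborhood is foliated by right translates $S_t=T^{-1}(t)$, where $T=\tau$, $\tau(x)=x_1-\tilde\phi(x)$, inherits the little H\"older modulus of $\phi$; $\HH^{q-\frac12}$-null level sets follow for a.e.\ $t$ (your cube count plus Fatou is a hands-on version of Lemma~\ref{lem: holder+ level set}); and locally bi-Lipschitz right translations transfer this to $S$.

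The step you flag as the main obstacle is not one, because the paper takes the Euclidean statement \eqref{eq:1/2littleHolder} directly from \cite{DiD21,ADDDLD24}. If you want it self-contained, rerunning Theorem~\ref{intrinsic parametrization is Holder} does work, but the gain comes from $f$, not from an intrinsic quotient of $\phi$. The uniform $o(d)$ Taylor remainder of $f\in C^1_{\mathrm H}$ along the purely vertical increment $v^2$, where the horizontal term vanishes, gives $|f(y)-f(xv^1)|=o(|v^2|^{1/2})=o(|w-\tilde w|^{1/2})$.
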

The previous corollary is restated and proved in Corollary~\ref{cor:Hq-1/2=0}. When $\G$ is the Heisenberg group $\H^1$, this result completes the picture in \cite{KirSer04} by answering a question left open there: the $\frac52$-dimensional Euclidean Hausdorff measure of the surface constructed in Theorem~\ref{T.KirSer04} must vanish.

\section{Preliminaries}\label{sect:preliminaries}

Let $\G$ be a real, finite dimensional, nilpotent, and simply connected Lie group. We say that $\G$ is a Carnot group, or a
stratified group, of step $s$ if its Lie algebra 
${\rm Lie}(\G)$ satisfies the following conditions. 
The stratification of ${\rm Lie}(\G)$ is given by
\begin{equation*}
{\rm Lie}(\G)=V_1\oplus V_2\oplus\cdots\oplus V_s,\quad
\text{where $\;[V_1,V_i]=V_{i+1}$ for every $i\ge1$},
\end{equation*}
$V_s\neq\{0\}$ for some positive integer $s$, and $V_i=\{0\}$ whenever $i>s$.
For every $i=1,\ldots,s$,
we define 
\begin{equation*}
n_i=\dim V_i\ge1,\quad\text{and} \quad m_i=\sum_{j=1}^i n_j,
\end{equation*}
along with $m_0=0$. In the sequel we denote $m_1$ simply by $m$.
Moreover, we denote by
\begin{equation*}
    q=\sum_{j=1}^s n_j\quad\text{and}\quad Q=\sum_{j=1}^s j n_j
\end{equation*}
the topological and Hausdorff dimension of $\G$, respectively. The latter dimension refers to the natural distance
on $\G$, called {\em homogeneous distance}, that will be defined below.

Under our hypotheses on $\G$, the exponential mapping $\exp:\Lie(\G)\to\G$ is bianalytic, therefore we can assume that
$\G$ has an additional linear structure, induced by $\Lie(\G)$.
Then we fix a basis $e_1,\ldots,e_q$ of $\G$ adapted to the stratification and consider the corresponding adapted
basis $(X_1,\ldots,X_q)$ of $\Lie(\G)$ such that 
$X_i(0)=e_i$ for every $i=1,\ldots,q$. 
The basis $(X_1,\ldots,X_m)$ of $V_1$ is the frame of left-invariant, {\em horizontal vector fields}.
Any linear combination of $X_1(x),\ldots,X_m(x)$ defines a 
{\em horizontal vector} of $T_x\G$. 

Exploiting the linear structure of $\G$ we introduce 
\emph{graded coordinates of the first kind}
$(x_1,\ldots,x_q)$, that is
$$
x=\exp(x_1X_1+\ldots+x_qX_q)\in\G.
$$
Throughout the paper, we work in this special coordinate system and identify the point $x$ with its coordinates
$(x_1,\ldots,x_q)\in\R^q$. We will also use the notation $x=\sum_{i=1}^qx_ie_i$.

For $1\leq  i\leq q$ we denote by $d_i\in\N$ the unique integer such that 
$m_{d_i-1}<i\le m_{d_i}$, that is the {\em degree} of the coordinate $x_i$. 
Then for any $\lambda>0$ we can introduce the 
\textit{intrinsic dilations} $\delta_\lambda \colon \G \to \G$,
which are defined by
\[
\delta_\lambda(x_1, \dots, x_q) = (\lambda^{d_1} x_1, \dots, \lambda^{d_q} x_q).
\]
The group law is determined by the Baker-Campbell-Hausdorff formula, in short BCH formula. For every $x,y\in\G$, it can be represented as
\begin{equation}\label{eq:BCHbasisfree}
    x  y =\sum_{i=1}^s c_i(x,y)=x+y+\sum_{j=m+1}^q\mathcal{Q}_j(x,y)e_j\in\G,
\end{equation}
 where $c_1(x,y)=x+y$ and each $c_i$ is a linear combination of $(i-1)$-iterated Lie brackets of $x$ and $y$, while each $\QQ_j$ is a homogeneous polynomial of degree $d_j$ with respect to the dilations of $\G$. Moreover, the terms $c_i$ satisfy the following recursive formula
\begin{equation}\label{eq:recursiveBCH}
\begin{split}
&(i + 1) c_{i+1}(X, Y) = \frac{1}{2} [X - Y, c_i(X, Y)] \\
&+\sum_{\substack{l \ge 1 \\ 2l \le i}} K_{2l} \sum_{\substack{k_1, \dots, k_{2l} > 0 \\ k_1 + \dots + k_{2l} = i}} [c_{k_1}(X, Y), [\dots, [c_{k_{2l}}(X, Y), X + Y],] \dots ],    
\end{split}
\end{equation}
for each $i=1,\ldots,s-1$, see \cite[Lemma 2.15.3]{Var84Lie}.

The left-invariant horizontal vector fields $X_1,\ldots,X_m$ 
span the first layer $V_1$ of ${\rm Lie}(\G)$ and Lie-generate
$\Lie(\G)$. Having a system of graded coordinates fixed, combined
with the BCH formula, yields a precise analytic expression
of the left-invariant vector fields
\begin{equation*}
    X_i(x)=\frac{d}{dt}(x te_i)\Big|_{t=0}=\partial_{x_i}+\sum_{j=m+1}^q a_{ij}(x)\partial_{x_j}.
\end{equation*}
Here $(\partial_{x_1},\ldots,\partial_{x_q})$ is the standard basis of vector fields of $\R^q$ and their evaluation at the point $x$ is understood. Moreover, we have
\begin{equation}\label{def of a_ij}
   a_{ij}(x)\coloneqq \frac{d}{dt}\QQ_j(x,te_i)\Big|_{t=0}.
\end{equation}

The so-called Lie bracket generating condition of $V_1$ 
is the characterizing property of a stratified group $\G$,
that gives special properties to the polynomials $a_{ij}$.
To emphasize the important role of this assumption in the
results of this paper, 
we also consider the strictly larger family of  
{\em positively graded Lie groups}. 
The only difference in a positively graded group $\G$ 
with respect to a stratified group is that the condition
$[V_1,V_i]=V_{i+1}$ is replaced by the weaker condition
\[
[V_i,V_j]\subset V_{i+j}
\]
for every $i,j\ge1$, with $\Lie(\G)=V_1\oplus\cdots V_s$ 
and $V_i\neq\{0\}$ for every $i=1,\ldots,s$.
In a sense, for positively graded groups the group operation
``may degenerate'', according to the following proposition.

\begin{prop}\label{prop: not stratified iff Rj=0}
A positively graded group $\G$ is not stratified if and only if there exist a graded basis $(w_1,\ldots,w_q)$ of $\G$ and $h>m$ such that representing the group operation as
\begin{equation*}
    xy=x+y+\sum_{j=m+1}^q R_j(x,y)\, w_j,
\end{equation*}
the polynomial $R_h$ is identically zero.
\end{prop}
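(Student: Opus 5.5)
We prove the two implications separately. Throughout, a \emph{graded basis} is a basis adapted to the grading $\Lie(\G)=V_1\oplus\cdots\oplus V_s$, and we use exponential coordinates with respect to it.

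\emph{If $\G$ is not stratified, such a basis exists.} Let $W_1=V_1$ and let $W_i=[V_1,W_{i-1}]\subset V_i$ for $i\ge2$. Since $\G$ is not stratified, these inclusions cannot all be equalities: otherwise $[V_1,V_i]=V_{i+1}$ for every $i$, and $\Lie(\G)$ would be stratified by the $V_i$. So pick a degree $d\ge2$ with $W_d\subsetneq V_d$.

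\begin{itemize}
\item The subalgebra generated by $V_1$ is $\mathfrak h=W_1\oplus\cdots\oplus W_s$. This follows from the Jacobi identity: brackets $[W_i,W_j]$ reduce to iterated brackets of elements of $V_1$.
\item Hence $\mathfrak h\cap V_d=W_d$.
\item More generally, each component $c_i(x,y)$ in \eqref{eq:BCHbasisfree} is a sum of iterated brackets of $x$ and $y$. Its $V_d$-component is therefore a sum of brackets $[u_1,[\dots,u_r]]$ with each $u_l\in V_{j_l}$, $j_1+\dots+j_r=d$, and $r\ge2$.
\end{itemize}

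Let $\mathcal B_d=\sum_{i+j=d,\ i,j\ge1}[V_i,V_j]$. All bracket terms of degree $d$ lie in $\mathcal B_d$, since by Jacobi any iterated bracket of total degree $d$ lies in $\sum_{i+j=d}[V_i,V_j]$.

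The argument turns on whether $\mathcal B_d\ne V_d$.

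\begin{itemize}
\item \emph{Case $\mathcal B_d\ne V_d$.} Choose a graded basis whose degree-$d$ part is a basis of $\mathcal B_d$ completed by a vector $w_h$ with $h>m$. Take $w_h^*$ to be a dual functional vanishing on $\mathcal B_d$ and on all the other $V_j$. Then the $w_h$-coefficient of every $c_i$, $i\ge2$, vanishes, so $R_h\equiv0$.
\item \emph{Case $\mathcal B_d=V_d$ for every such $d$.} Take the least $d$ with $W_d\ne V_d$. Then $V_j=W_j$ for $j<d$, so $\mathcal B_d=\sum[W_i,W_j]\subset W_d$, a contradiction.
\end{itemize}

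Hence for the minimal such $d$ the first case holds, and we are done. This step is the main point of the proof: the minimality of $d$ is what makes the brackets collapse into $W_d$.

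\emph{If such a basis exists, $\G$ is not stratified.} Suppose $R_h\equiv0$ and that $\G$ is stratified. Let $d=d_h\ge2$ be the degree of $w_h$ and $w_h^*$ the dual coordinate, which annihilates the other $V_j$.

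\begin{itemize}
\item From \eqref{eq:BCHbasisfree}, the second-order part of $xy$ is $c_2=\tfrac12[x,y]$.
\item Write $R_h=\sum_i w_h^*(c_i)$. The summands $w_h^*(c_i(x,y))$ have different bidegrees in the scaling $(x,y)\mapsto(\lambda x,\mu y)$ of the linear structure: $c_i$ is bihomogeneous of total order $i$ in $(x,y)$.
\item Hence $R_h\equiv0$ forces each summand to vanish, and in particular $w_h^*([x,y])\equiv0$.
\item Taking $x\in V_1$ and $y\in V_{d-1}$ gives $w_h^*([V_1,V_{d-1}])=0$.
\end{itemize}

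But stratification gives $[V_1,V_{d-1}]=V_d\ni w_h$, and $w_h^*(w_h)=1$, a contradiction. Therefore $\G$ is not stratified.

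The only technical point in this direction is the separation of the BCH terms by bihomogeneity. This is routine, since the polynomials $c_i$ are homogeneous of order $i$ in the linear variables, and this holds independently of the dilations.
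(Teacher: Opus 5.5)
Your proof is correct. For ``not stratified $\Rightarrow R_h\equiv 0$'' it is essentially the paper's argument. The paper also takes the least degree $i^*$ with $V_{i^*}\not\subset\Lie\text{-}\spann(V_1)$ and uses minimality to push the degree-$i^*$ part of every BCH term $c_i$, $i\ge 2$, into a proper subspace of $V_{i^*}$. It completes a basis of $V_{i^*}\cap\Lie\text{-}\spann(V_1)$ (your $W_d$), whereas you complete a basis of the possibly smaller space $\mathcal B_d=\sum_{i+j=d}[V_i,V_j]$, which is exactly what the brackets can reach.

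The converse is where you genuinely differ. The paper passes to the left-invariant fields $X_i=\partial_{w_i}+\sum_j b_{ij}\partial_{w_j}$ and notes that $R_h\equiv 0$ forces $b_{ih}\equiv 0$. It then argues by induction on brackets that no element of $\Lie\text{-}\spann(V_1)$ has a nonzero $w_h^*$-component at the origin, so $X_h\notin\Lie\text{-}\spann(V_1)$. You stay inside the Lie algebra: the joint linear scaling $(x,y)\mapsto(\lambda x,\lambda y)$ isolates the quadratic part $\frac12 w_h^*([x,y])$ of $R_h$, and its vanishing contradicts $[V_1,V_{d-1}]=V_d$ immediately.

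Your route is shorter and needs no vector fields. Combined with your $\mathcal B_d$ observation, it yields the clean characterization that $R_h\equiv 0$ if and only if $w_h^*$ vanishes on $[\Lie(\G),\Lie(\G)]$. The paper's route is phrased through the coefficients $b_{ij}$, the analogues of the $a_{ij}$ that drive the later constructions.

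One wording fix: $c_i$ is not bihomogeneous in $x$ and $y$ separately ($c_3$ has parts of bidegree $(2,1)$ and $(1,2)$). It is only homogeneous of total degree $i$ under the joint scaling, which is all your argument actually uses.
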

\begin{proof}
For the if part, we may assume that there exists a graded basis $(w_1,\ldots,w_q)$ such that $R_l\equiv0$ for some $l\geq m+1$. 
We recall the form of the left-invariant vector field
\begin{equation}\label{eq: def of Xi wrt base wi}
X_i(x)=\partial_{w_i}+\sum_{j=m_{d_i}+1}^q b_{ij}(x) \partial_{w_j}
\end{equation}
associated with the basis $(w_1,\ldots,w_q)$, for every $i=1,\ldots,q$, where
\begin{equation}\label{eq: def of bij}
   b_{ij}(x)\coloneqq \frac{d}{dt}R_j(x,tw_i)\Big|_{t=0}.
\end{equation}
By our assumptions, we have $b_{il}\equiv0$ whenever $1\le d_i<d_l$.
As a consequence, if $(w^*_1,\ldots,w^*_q)$ denotes the dual basis of $(\partial_{w_1}|_0,\ldots,\partial_{w_q}|_0)$,
we obtain $w_l^*(X_i(0))=0$ and $w_l^*([X_i,X_j](0))=0$ for every $i,j=1,\ldots,m$. Therefore, by induction, we have $w_l^*(Y(0))=0$ for every $Y\in\Lie$-$\spann(V_1)$. It follows that $X_l\not\in\Lie$-$\spann(V_1)$, thus $\G$ is not stratified.

For the converse, assume that $\G$ is not stratified. Let $i^*\geq2$ be the smallest integer such that
$$V_{i^*}\setminus [V_1,V_{i^*-1}]=V_{i^*}\setminus \Lie\text{-}\spann(V_1)\neq\emptyset.$$
Let $m_{i}=\sum_{j=1}^{i}\dim V_j$ for all $i=1,\ldots,s$, and set  
    $$k=\dim (V_{i^*}\cap\Lie\text{-}\spann(V_1))<\dim V_{i^*}.$$
Let $W_i=\exp V_i\subset\G$ for every $i=1,\ldots s$, and observe that each $W_i$ inherits a vector space structure from $V_i$.
We select linearly independent vectors $w_{m_{i^*-1}+1},\ldots,w_{m_{i^*-1}+k}\in W_{i^*}$ such that 
    \begin{equation*}\label{eq: liespan1 in Vi}
       \spann \{w_{m_{i^*-1}+1},\ldots,w_{m_{i^*-1}+k}\}=W_{i^*}\cap\Lie\text{-}\spann(W_1).
    \end{equation*}
We complete them to a graded basis $(w_1,\ldots,w_q)$ of $\G=W_1\oplus \cdots \oplus W_{s}$, therefore 
 \begin{equation}\label{eq: Vi othogonal to liespanV1}
    \spann\{w_{m_{i^*-1}+k+1},\ldots,w_{m_{i^*}}\}\cap\Lie\text{-}\spann(W_1)=\{0\}.
 \end{equation}
Let $(w^*_1,\ldots,w^*_q)$ be the dual basis of $(w_1,\ldots,w_q)$. 
Then, for each $j=m_{i^*-1}+1,\ldots,m_{i^*}$ the BCH formula yields
\begin{equation}\label{eq: R_j}
\begin{split}
    R_{j}(x,y)&=w^*_j\left(\sum_{i=2}^s c_i(x,y)\right)=\sum_{i=2}^{i^*} w^*_j\circ c_i(x,y)\\
    &=\sum_{i=2}^{i^*} w^*_j\circ c_i\big(\pi_{i^*-1}(x),\pi_{i^*-1}(y)\big),
\end{split}
\end{equation}
where $\pi_{i^*-1}\colon\G\to W_1\oplus\cdots \oplus W_{i^*-1}$ is the canonical projection map. 
By the minimality of $i^*$ we have $W_1\oplus\cdots \oplus W_{i^*-1}\subset\Lie\text{-}\spann(W_1)$, therefore $\pi_{i^*-1}(x),\pi_{i^*-1}(y)\in \Lie\text{-}\spann(W_1)$. 
By \eqref{eq:recursiveBCH}, each $c_i(\pi_{i^*-1}(x),\pi_{i^*-1}(y))$ is a linear combination of iterated commutators of $\pi_{i^*-1}(x)$ and $\pi_{i^*-1}(y)$ for all $i=2,\ldots,i_*$, therefore
$$
c_i(\pi_{i^*-1}(x),\pi_{i^*-1}(y))\in\Lie\text{-}\spann(W_1).
$$
Thus, by \eqref{eq: Vi othogonal to liespanV1} and \eqref{eq: R_j}, we conclude that $R_j\equiv 0$ for each $j=m_{i^*-1}+k+1,\ldots,m_{i^*}.$
\end{proof}

\begin{remark}
    In particular, the previous proposition implies that each polynomial $\QQ_j$ of \eqref{eq:BCHbasisfree} is nonvanishing for every $j\geq m+1$.
\end{remark}
In any positively graded group $\G$, it is possible to define the so-called 
{\em homogeneous norm} $\|\cdot\|:\G\to[0,+\infty)$, that satisfies the
properties  
$\|\delta_rx\|=r\|x\|$ for all $r>0$, $x\in\G$, and 
$\|xy\|\le \|x\|+\|y\|$ for all $x,y\in\G$, see \cite{HebSik90}.
Setting $d(x,y)=\|x^{-1}y\|$ yields the so-called homogeneous distance. Indeed, we define a {\em homogeneous distance} 
to be a distance $d:\G\times\G\to[0,+\infty)$ satisfying
the conditions $d(zx,zy)=d(x,y)$ and $d(\delta_rx,\delta_ry)=rd(x,y)$
whenever $x,y,z\in\G$ and $r>0$.
It is not difficult to observe that all homogeneous distances are bi-Lipschitz equivalent to each other. 
Thus, for our purposes, it suffices to consider a fixed homogeneous norm defined as follows
\begin{equation*}
    \|x\|_\G=\max_{i=1,\ldots,q}c_i|x_i|^{1/d_i},
\end{equation*}
where $0<c_i\le 1$ are suitable constants and $c_1=1$, see for instance \cite[Section~2.2]{SerraCassano2016}.
This norm clearly defines the homogeneous distance
\begin{equation}\label{def of d}
    d(x,y)=\|x^{-1}y\|_\G.
\end{equation}
It is important to compare the homogeneous distance with the Euclidean distance. For any bounded open set $\Omega\subset \G$ there exist $C_1,C_2>0$ such that for every $x,y\in\Omega$
\begin{equation}\label{comparison between d_E and d_CC}
        C_1|x-y|\leq d(x,y)\leq C_2|x-y|^{\frac{1}{s}}.
\end{equation}

\begin{defn}\label{def: C^1,alpha function}
Let $\Omega\subset\G$ be an open set and let $(X_1,\ldots,X_m)$ be a basis of $V_1$. We say that a continuous function $f\colon\Omega\to\R$ is $C^1_{\mathrm{H}}$ if the {\em horizontal gradient} 
$$
\nabla_{\mathrm{H}} f=(X_1f,\ldots,X_m f)
$$
pointwise exists everywhere and it is continuous in $\Omega\subset\G$. The linear space of all 
$C^1_\mathrm H$ functions on $\Omega$ is denoted by $C^1_\mathrm H(\Omega)$. 
If in addition there exists $C>0$ such that
\[ 
|\nabla_\mathrm H f(x)-\nabla_\mathrm H f(y)|\le C\, d(x,y)^\alpha 
\]
for every $x,y\in\Omega$ and for some $0<\alpha\leq 1$, 
we say that $f:\Omega\to\R$ is $C^{1,\alpha}_\mathrm H$ on $\Omega$. We denote by $C^{1,\alpha}_\mathrm H(\Omega)$ the linear space of all $C^{1,\alpha}_\mathrm H$ functions
in $\Omega$ and set $C^1_\mathrm H(\Omega)=C^{1,0}_\mathrm H(\Omega)$.
\end{defn}

The class of $C^{1,\alpha}_\mathrm H$ functions defines the corresponding class of $C^{1,\alpha}_\mathrm H$ hypersurfaces.

\begin{defn}\label{def: C^1,alpha hypersurface}
We say that $S\subset \G$ is a {\em $C^{1,\alpha}_{\mathrm{H}}$ hypersurface}, $0\leq\alpha\leq 1$, if for every $x\in S$ there exist an open neighborhood $\Omega\subset\G$ of $x$ and a $C^{1,\alpha}_{\mathrm{H}}$ function $f:\Omega\rightarrow \R$ such that $S\cap \Omega=\{y\in \Omega \colon f(y)=0\}$ and $\nabla_{\mathrm{H}}f(y)\neq 0$ for any $y\in \Omega$.
\end{defn}
Vector-valued $C^{1,\alpha}_\mathrm H$ functions are defined in an analogous way.
\begin{defn}
Let $\Omega\subset\G$ be open, $j\in\N^+$, and $0\le\alpha\leq1$. 
A function $f\colon\Omega\to\R^j$ is $C^{1,\alpha}_{\mathrm{H}}$ 
if so is each of its components $f_i$, $1\le i\le j$.
The {\em h-Jacobian matrix} of $f$ at $x\in\Omega$ is defined as
the following $j\times m$ matrix
\[
\nabla_\mathrm H f(x)=\left(\begin{array}{c}
     \nabla_\mathrm H f_1(x)  \\
     \nabla_\mathrm H f_2(x)  \\
     \vdots  \\
     \nabla_\mathrm H f_j(x)
\end{array}\right).
\]
The linear space of all $C^{1,\alpha}_\mathrm H$ functions on $\Omega\subset\G$ and taking values in $\R^j$ is denoted by $C^{1,\alpha}_\mathrm H(\Omega,\R^j)$.
\end{defn}

Next, we recall some basic tools that will be used in the sequel.
The first is a special version of a general Whitney's extension theorem, proved in \cite{PupVod2006}.

\begin{theorem}[Whitney's $C^{1,\alpha}_\mathrm H$ extension theorem]\label{whitney alpha}
    Let $F \subset \mathbb{G}$ be a closed set, let $\alpha\in(0,1]$ and $f\colon F\to \R$, $\varphi\colon F\to \R^m$ satisfying the following properties: \\
    there exists a constant $M$ such that
    \begin{enumerate}[label=\emph{(\roman*)}]
        \item[(i)] $|f(x)|, |\varphi(x)| \leq M \quad \text{for every } x\in F;$
    \item[(ii)] $|f(x) - f(y) - \langle \varphi(x), \pi(y^{-1}x) \rangle| \leq M d(x,y)^{1+\alpha} \quad \text{for every } x,y \in F;$
    \item[(iii)] $|\varphi(x) - \varphi(y)| \leq M d(x,y)^{\alpha} \quad \text{for every } x,y \in F;$
    \end{enumerate}
    where we have used the projection $\pi(x)=\pi(x_1,\ldots,x_q)=(x_1,\ldots,x_m)$ with respect to graded coordinates
    and the Euclidean scalar
    product $\langle\cdot,\cdot\rangle$. Then there exists an extension $\bar{f} : \G\to \mathbb{R}$ of $f$, such that $\bar{f} \in C_{\mathrm{H}}^{1,\alpha}(\G)$ and
\begin{equation*}
    \varphi(x) = \nabla_{\mathrm{H}} \bar{f}(x) \quad \text{for all } x \in F.
\end{equation*}
\end{theorem}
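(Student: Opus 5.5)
The plan is the classical Whitney construction, carried out in the metric space $(\G,d)$ with the homogeneous distance \eqref{def of d}, using left translations and intrinsic dilations in place of Euclidean translations and scalings. For $x\in F$ I define the \emph{intrinsic first-order Taylor polynomial}
\[
P_x(y)=f(x)+\langle \varphi(x),\pi(x^{-1}y)\rangle ,\qquad y\in\G .
\]
The key algebraic fact is that $y\mapsto \pi(x^{-1}y)$ is the horizontal projection of a left translate. By the BCH formula \eqref{eq:BCHbasisfree}, the horizontal coordinates of a product are additive, so $X_j\big(\pi(x^{-1}\cdot)\big)=e_j$ for $j=1,\ldots,m$. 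Hence $\nabla_\mathrm H P_x\equiv \varphi(x)$, and $P_x$ is smooth. Condition (ii) then says exactly that $|f(y)-P_x(y)|\le M d(x,y)^{1+\alpha}$ for $x,y\in F$, possibly after swapping the roles of $x$ and $y$. That swap costs only a term $|\varphi(x)-\varphi(y)|\,|\pi(y^{-1}x)|\le M d(x,y)^{1+\alpha}$, by (iii) and the identity $\|\pi(z)\|\le \|z\|_\G$.

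Next I build a Whitney decomposition of the open set $\G\setminus F$. Since $(\G,d)$ is doubling, there is a countable family of balls $B_i=B(z_i,r_i)$ with the following properties: $r_i$ is comparable to $\operatorname{dist}_d(z_i,F)$, the enlarged balls $B(z_i,2r_i)$ stay a comparable distance from $F$, and the family has uniformly bounded overlap. On these balls I build a partition of unity $\{\phi_i\}$ subordinate to the doubled balls by taking $\phi_i=\theta_i/\sum_k\theta_k$ with $\theta_i(y)=\theta\big(\delta_{1/r_i}(z_i^{-1}y)\big)$ for a fixed smooth bump $\theta$. By left invariance and homogeneity of the $X_j$, this gives $|\nabla_\mathrm H\phi_i|\le C r_i^{-1}$, and moreover $\sum_i\nabla_\mathrm H\phi_i=0$. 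For each $i$ I choose $p_i\in F$ with $d(z_i,p_i)=\operatorname{dist}_d(z_i,F)$. I then set $\bar f=f$ on $F$ and
\[
\bar f(y)=\sum_i\phi_i(y)\,P_{p_i}(y)\qquad\text{on }\G\setminus F ,
\]
multiplying by a cutoff if a global bound is wanted; this is where (i) is used.

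The estimates run as follows. Off $F$, $\bar f$ is smooth. Fix $y\notin F$ and let $p\in F$ be a nearest point, so $\delta:=d(y,p)$ is comparable to $r_i$ and to $d(p,p_i)$ for every $i$ with $\phi_i(y)\neq0$. Using $\sum_i\nabla_\mathrm H\phi_i=0$, I get
\[
\nabla_\mathrm H\bar f(y)=\sum_i\phi_i(y)\varphi(p_i)+\sum_i\nabla_\mathrm H\phi_i(y)\big(P_{p_i}(y)-P_p(y)\big).
\]
The bounds (ii) and (iii), together with the quasi-triangle estimate $\|\pi(p_i^{-1}y)\|\lesssim\delta$, give $|P_{p_i}(y)-P_p(y)|\lesssim \delta^{1+\alpha}$. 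It follows that $|\nabla_\mathrm H\bar f(y)-\varphi(p)|\lesssim\delta^\alpha$, and similarly $|\bar f(y)-P_p(y)|\lesssim \delta^{1+\alpha}$.

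These two estimates give Pansu-type differentiability at points $p\in F$ with horizontal differential $\varphi(p)$. For $y'\in F$ this is immediate from (ii); for $y'\notin F$ I compare through the nearest point of $y'$. Combining with continuity, I conclude that $\nabla_\mathrm H\bar f$ exists everywhere and equals $\varphi$ on $F$.

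The last step, and in my view the main obstacle, is the global Hölder bound $|\nabla_\mathrm H\bar f(y)-\nabla_\mathrm H\bar f(y')|\lesssim d(y,y')^\alpha$. I will split into cases according to $d(y,y')$ versus the distances of $y,y'$ to $F$. When both points lie near a common Whitney scale, I differentiate the formula above once more. The horizontal second derivatives satisfy $|X_kX_j\phi_i|\lesssim r_i^{-2}$, and $X_k\big(P_{p_i}-P_p\big)=\varphi_k(p_i)-\varphi_k(p)=O(\delta^\alpha)$. Together these give $|X_k\nabla_\mathrm H\bar f|\lesssim \delta^{\alpha-1}$. I then integrate this bound along a horizontal curve joining $y$ to $y'$, using that $d$ is comparable to the Carnot--Carathéodory length. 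When $d(y,y')$ is large compared with both distances to $F$, I pass through the nearest points in $F$ and use (iii). The delicate points are producing such horizontal curves of length $\lesssim d(y,y')$ that stay at distance $\gtrsim\delta$ from $F$, and handling the non-commutativity in $\pi(p_i^{-1}y)$ versus $\pi(p^{-1}y)$. Both are controlled by the homogeneity of $d$ and the explicit form of the BCH formula.
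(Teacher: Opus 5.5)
Your proposal is correct. Note that the paper does not prove this statement: it quotes it as a special case of a general Whitney-type extension theorem from \cite{PupVod2006}, and for $\alpha=0$ it invokes \cite[Theorem~5.2]{FSSC5} instead.

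Your argument is the first-order case of that classical scheme, written out in a self-contained way. It uses a bounded-overlap cover of $\G\setminus F$ by $d$-balls, bumps rescaled as $\theta(\delta_{1/r_i}(z_i^{-1}\cdot))$, and nearest-point Taylor polynomials $P_{p_i}$. The citation buys generality. Your version shows that at order one the group structure enters only through three facts:
\begin{itemize}
\item $\pi$ is a homomorphism onto $(\R^m,+)$, so $P_x$ is affine in the horizontal coordinates, with $\nabla_{\mathrm H}P_x\equiv\varphi(x)$ and $X_kX_jP_x\equiv 0$;
\item the $X_j$ are left-invariant and homogeneous of degree one;
\item points at $d$-distance $\rho$ are joined by horizontal curves of length $\lesssim\rho$.
\end{itemize}
It also shows that hypothesis (i) plays no role in the $C^{1,\alpha}_{\mathrm H}$ estimates.

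The steps you flag as delicate are routine.
\begin{itemize}
\item Since $\pi(p_i^{-1}y)=\pi(y)-\pi(p_i)$ exactly, no noncommutativity arises. The identity $P_{p_i}(y)-P_p(y)=\bigl(f(p_i)-P_p(p_i)\bigr)+\langle\varphi(p_i)-\varphi(p),\pi(y)-\pi(p_i)\rangle$ gives the $\delta^{1+\alpha}$ bound at once.
\item If $d(y,y')\le c\operatorname{dist}(y,F)$ with $c$ small, a Carnot--Carath\'eodory geodesic from $y$ to $y'$ has length $\lesssim d(y,y')$ and stays at distance $\gtrsim\operatorname{dist}(y,F)$ from $F$.
\item Otherwise $d(y,y')$ dominates both $\operatorname{dist}(y,F)$ and $\operatorname{dist}(y',F)$, so your two cases are exhaustive.
\end{itemize}

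The one point worth stating explicitly concerns the second-derivative estimate. There you must freeze $p$, say as a nearest point to $y$, rather than re-choose it at each nearby $z$, because the nearest-point map is not differentiable. This is legitimate: since $\sum_i\nabla_{\mathrm H}\phi_i\equiv 0$, the formula $\nabla_{\mathrm H}\bar f=\sum_i\phi_i\varphi(p_i)+\sum_i\nabla_{\mathrm H}\phi_i\,(P_{p_i}-P_p)$ holds on $\G\setminus F$ for every fixed $p\in F$.
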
 

The notion of \textit{intrinsic graph} naturally appears in connection with the implicit function theorem in Carnot groups. 
As it occurs in the Euclidean case, the theorem requires a 
factorization to represent the level set as a suitable graph. For a Carnot group $\G$, we consider the factorization with
respect to two homogeneous subgroups $\W\subset\G$ and 
$\V\subset\G$. Precisely, we say that $\W$ and $\V$ are
\textit{complementary subgroups} if 
$\W\cap\V=\{0\}$ and $\G=\W\V$. 
We may characterize the complementary subgroups $\W$ and $\V$ by the group projections
\[
\pi_\W:\G\to\W \quad \text{and}\quad \pi_\V:\G\to\V
\]
such that for every $x\in\G$ we have the unique representation $x=\pi_\W(x)\pi_\V(x)$.

For any subset $U\subset\W$ we define the \textit{intrinsic graph} of $\phi\colon U\subset\W\to\V$ as the set 
\begin{equation}\label{def intrinsic graph}
    \graph(\phi)\coloneqq \{w\phi(w)\colon w\in U\}.
\end{equation}
The implicit function theorem essentially shows that 
regular level sets have the intrinsic graph form of \eqref{def intrinsic graph}; 
see for instance \cite[Theorem 2.1]{FSSC03}. We state it in the following version.

\begin{theorem}[Implicit function theorem]\label{implicit function theorem}
Let $\Omega\subset\G$ be open, let $f\in C^1_{\mathrm{H}}(\Omega)$ and assume that 
$\nabla_Hf(x)\neq0$ for some $x\in f^{-1}(f(x))=S$.
We fix a graded basis $(e_1,\ldots,e_q)$ of $\G$ 
with associated basis $(X_1,\ldots,X_q)$ of left-invariant vector fields in $\Lie(\G)$.
We have $X_jf(x)\neq0$ for some $j\in\{1,\ldots,m\}$.
Let us define the homogeneous subgroups 
\[
\V=\spann\{e_j\}\quad \text{and} \quad \W=\spann\{e_1,\ldots,e_{j-1},e_{j+1},\ldots,e_q\}.
\]
Then there exist open neighborhoods $\Omega'\subset\G$ of $x$ and $U\subset\W$ of $\pi_{\W}(x)$, and a continuous function $\phi\colon U\to\V$ such that
    \begin{equation*}
        S\cap \Omega'=\graph{\phi}=
\{w\phi(w) \colon w\in  U\}.
    \end{equation*}
\end{theorem}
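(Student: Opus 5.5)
The plan is to follow the classical one-dimensional proof of the implicit function theorem, replacing Euclidean lines by the integral curves of the left-invariant field $X_j$. These are right translates $t\mapsto y(te_j)$.

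First, by continuity of $\nabla_\mathrm H f$, some $X_jf(x)\neq 0$ with $1\le j\le m$. Replacing $f$ by $-f$ if needed, we can fix an open neighborhood $\Omega_0\subset\Omega$ of $x$ on which $X_jf>0$. Set $c=f(x)$ and $w_0=\pi_\W(x)$, $t_0 e_j=\pi_\V(x)$, so $x=w_0(t_0e_j)$. The map
\[
\Phi\colon \W\times\R\to\G,\qquad \Phi(w,t)=w\,(te_j),
\]
is polynomial. It is a homeomorphism with continuous inverse $y\mapsto(\pi_\W(y),\pi_\V(y))$. Indeed, since $j\le m$, the BCH formula \eqref{eq:BCHbasisfree} shows $w(te_j)=w+te_j+(\text{terms in degree}\ge2\text{ coordinates})$. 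Hence $t$ is the $j$-th coordinate of $y$, and $w=y(-te_j)$ then depends polynomially on $y$.

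The key observation is the following. For every $w$ with $\Phi(w,t)\in\Omega_0$, the function $g_w(t)=f(w(te_j))$ is differentiable with
\[
g_w'(t)=\frac{d}{ds}f\big(w(te_j)(se_j)\big)\Big|_{s=0}=X_jf\big(w(te_j)\big)>0 .
\]
This uses $(te_j)(se_j)=(t+s)e_j$ and the definition of $X_j$ as the derivative along right translation by $se_j$. Only the pointwise existence of $X_jf$ is needed here, which Definition~\ref{def: C^1,alpha function} provides. Choose $\delta>0$ and a neighborhood $U_1$ of $w_0$ in $\W$ with $\Phi(U_1\times[t_0-\delta,t_0+\delta])\subset\Omega_0$. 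Then $g_{w_0}$ is strictly increasing, so $f(w_0((t_0-\delta)e_j))<c<f(w_0((t_0+\delta)e_j))$. By continuity of $f\circ\Phi$ we can shrink $U_1$ to an open $U\ni w_0$ on which these two strict inequalities persist. For each $w\in U$, the intermediate value theorem and strict monotonicity of $g_w$ give a unique $t(w)\in(t_0-\delta,t_0+\delta)$ with $f(w(t(w)e_j))=c$. Define $\phi(w)=t(w)e_j\in\V$ and $\Omega'=\Phi(U\times(t_0-\delta,t_0+\delta))$. The set $\Omega'$ is open because $\Phi$ is a homeomorphism, and by construction $S\cap\Omega'=\graph\phi$.

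Continuity of $\phi$ is the only remaining point, and I expect it to be the mildest obstacle. The argument is to repeat the construction at any $w_1\in U$ with an arbitrary $\delta'<\delta$ centered at $t(w_1)$. This shows that for $w$ near $w_1$ the unique zero lies in $(t(w_1)-\delta',t(w_1)+\delta')$, and uniqueness identifies it with $t(w)$. Hence $|t(w)-t(w_1)|<\delta'$, which proves continuity of $\phi$ at $w_1$.
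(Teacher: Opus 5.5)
Your proof is correct. Note that the paper does not prove this statement at all: it is quoted from \cite[Theorem~2.1]{FSSC03}. Your argument therefore supplies a self-contained proof of something the paper uses as a black box, and it follows the standard elementary route. Its ingredients are strict monotonicity of $t\mapsto f(w(te_j))$ along the integral curves of $X_j$, the intermediate value theorem on the box $\Phi(U\times(t_0-\delta,t_0+\delta))$, and continuity of $\phi$ derived from uniqueness.

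The one step that depends on conventions is the identity $g_w'(t)=X_jf(w(te_j))$. With Definition~\ref{def: C^1,alpha function}, read so that ``pointwise exists'' means differentiability along $s\mapsto y(se_j)$, the identity is immediate, as you say. This reading is consistent with $X_j(y)=\frac{d}{ds}(y\,se_j)|_{s=0}$. If instead $\nabla_\mathrm H f$ is understood in the distributional sense, as is common in the literature on $C^1_\mathrm H$ functions, you would first need the formula $f(y(te_j))-f(y)=\int_0^t X_jf(y(se_j))\,ds$. That formula is typically obtained by mollification, and a cited reference working in that setting has to supply it.

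Two small corrections. First, the existence of some $j$ with $X_jf(x)\neq0$ follows directly from $\nabla_\mathrm H f(x)\neq0$; continuity is only needed to produce $\Omega_0$. Second, in the continuity step, choose $\delta'>0$ so that $[t(w_1)-\delta',t(w_1)+\delta']\subset(t_0-\delta,t_0+\delta)$, rather than merely $\delta'<\delta$. This keeps the whole segment inside the region where monotonicity, and hence uniqueness, was established. It is harmless, since continuity only requires small $\delta'$.
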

To estimate the Hausdorff dimension of our surfaces, we need the well-known coarea inequality, or Eilenberg inequality. A first general version of this
inequality was proved by Federer in \cite[2.10.25]{Federer69}, under specific assumptions on the target, or on the measure of level sets.
These assumptions were removed by the work of 
Davies, \cite{Davies1970Increasing}.
We address the reader to the work of Esmayli and Haj\l asz  \cite{EsmHaj2021} for a different proof and an interesting historical account on the result.

\begin{theorem}[Eilenberg inequality]\label{lemma:Eilenberg}
Let $X$ and $Y$ be two arbitrary metric spaces, let $E \subset X$ be any subset and fix $0 \leq t \leq s < \infty$.
Then, for any Lipschitz map $f : X \to Y$, it holds \begin{equation}\label{ineq:Eilenberg}
\int_Y^*\mathcal{H}^{s-t} \big( f^{-1}(y) \cap E \big) \, d\mathcal{H}^t(y) 
\leq (\operatorname{Lip} f)^t \, \frac{\omega_{s-t} \, \omega_t}{\omega_s} \, \mathcal{H}^s(E).
\end{equation}
Moreover, if $X$ is boundedly compact, namely, bounded and closed sets in $X$ are compact, $E$ is $\mathcal{H}^s$-measurable, and with $\mathcal{H}^s(E) < \infty$, then
\begin{equation}\label{eq:integrand}
 y \mapsto \mathcal{H}^{s-t} \big( f^{-1}(y) \cap E \big)
\end{equation}
is $\mathcal{H}^t$-measurable and the upper integral 
of \eqref{ineq:Eilenberg} coincides with the integral of \eqref{eq:integrand} with respect to $\mathcal H^t$.
\end{theorem}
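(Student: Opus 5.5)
The plan is the classical covering argument of Federer \cite[2.10.25]{Federer69}. The delicate limiting step would be handled with Davies' increasing sets lemma \cite{Davies1970Increasing}, following the presentation of \cite{EsmHaj2021}. We may assume $\HH^s(E)<\infty$ and $L=\operatorname{Lip}f>0$; if $L=0$, then $f(E)$ is a point, and the claim is trivial for $t>0$ and a tautology for $t=0$. Fix $\delta>0$ and $\eps>0$. By the definition of $\HH^s_\delta$ we choose a countable cover $E\subset\bigcup_i E_i$ with $\diam E_i<\delta$ and
\[
\sum_i \omega_s\Big(\frac{\diam E_i}{2}\Big)^s\le \HH^s_\delta(E)+\eps .
\]
Replacing $E_i$ by its closure does not change diameters, so we take the $E_i$ closed. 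For every $y\in Y$ the sets $E_i\cap f^{-1}(y)$ cover $f^{-1}(y)\cap E$, whence
\[
\HH^{s-t}_\delta\big(f^{-1}(y)\cap E\big)\le g_\delta(y):=\sum_i \omega_{s-t}\Big(\frac{\diam E_i}{2}\Big)^{s-t}\chi_{f(E_i)}(y).
\]

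Next we integrate $g_\delta$ in $y$. Since $\diam f(E_i)\le L\diam E_i<L\delta$, the set $f(E_i)$ is admissible for $\HH^t_{L\delta}$, and $\HH^t_{L\delta}(f(E_i))\le \omega_t L^t(\diam E_i/2)^t$. Multiplying by the weights and summing, the weights combine exactly as
\[
\omega_{s-t}\omega_t\Big(\frac{\diam E_i}{2}\Big)^{s}=\frac{\omega_{s-t}\omega_t}{\omega_s}\,\omega_s\Big(\frac{\diam E_i}{2}\Big)^{s}.
\]
This gives the inequality with the size-$L\delta$ premeasure:
\[
\int^*_Y g_\delta\, d\HH^t_{L\delta}\le L^t\frac{\omega_{s-t}\omega_t}{\omega_s}\big(\HH^s_\delta(E)+\eps\big).
\]
Here the upper integral of the countable sum is estimated by countable subadditivity of upper integrals against the outer measure $\HH^t_{L\delta}$.

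The main obstacle is letting $\delta\to0$. We have $\HH^{s-t}_\delta(f^{-1}(y)\cap E)\uparrow \HH^{s-t}(f^{-1}(y)\cap E)$ pointwise, and $\HH^t_{L\delta}\uparrow\HH^t$. However, we only control the integral against the approximating premeasures $\HH^t_{L\delta}$, which are not Borel-regular measures. Monotone convergence for upper integrals with respect to these outer measures is exactly what is not automatic in arbitrary metric spaces. I would first pass to a sequence $\delta_k\downarrow0$ and consider the increasing functions $h_k(y)=\HH^{s-t}_{\delta_k}(f^{-1}(y)\cap E)$. Writing the upper integral via layer cake, $\int^*h\,d\mu=\int_0^\infty\mu(\{h>r\})\,dr$, it suffices to show, for each $r>0$ and each fixed $\eta>0$,
\[
\HH^t_{\eta}\Big(\bigcup_k\{h_k>r\}\Big)=\lim_k \HH^t_{\eta}(\{h_k>r\}).
\]
This is Davies' increasing sets lemma for $\HH^t_\eta$, valid in every metric space up to a harmless factor that disappears as $\eta\to0$. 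Combining it with the fact that $\delta_k\le\eta/L$ eventually, and then letting $\eta\to0$ and $\eps\to0$, yields \eqref{ineq:Eilenberg}.

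For the second part, assume that $X$ is boundedly compact, $E$ is $\HH^s$-measurable and $\HH^s(E)<\infty$. We approximate $E$ from inside by $\sigma$-compact sets $K\subset E$ with $\HH^s(E\setminus K)=0$, using Borel regularity and bounded compactness. For compact $K$ and closed coverings, the sets $f(E_i\cap K)$ are compact, so the functions $g_\delta$ become Borel. Moreover, $y\mapsto\HH^{s-t}_\delta(f^{-1}(y)\cap K)$ is a limit of such functions and is upper semicontinuous-type measurable, hence $\HH^t$-measurable in the limit. Applying \eqref{ineq:Eilenberg} to the null set $E\setminus K$ shows that $\HH^{s-t}(f^{-1}(y)\cap(E\setminus K))=0$ for $\HH^t$-a.e. $y$. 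Hence the integrand \eqref{eq:integrand} agrees a.e. with the measurable one built from $K$, and the upper integral becomes an honest integral.
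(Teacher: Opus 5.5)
The paper gives no proof of this theorem. It quotes it from \cite{Federer69}, \cite{Davies1970Increasing} and \cite{EsmHaj2021}, so your outline can only be compared with that literature. Your opening steps are the classical ones: the closed $\delta$-cover of $E$, the pointwise bound $\HH^{s-t}_\delta(f^{-1}(y)\cap E)\le g_\delta(y)$, and the bookkeeping of the weights. The gap is the estimate $\int^*_Y g_\delta\,d\HH^t_{L\delta}\le\sum_i c_i\,\HH^t_{L\delta}(f(E_i))$, with $c_i=\omega_{s-t}(\diam E_i/2)^{s-t}$, which you justify ``by countable subadditivity of upper integrals''. This estimate is not available, and it is exactly where the difficulty of the theorem lies.

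Since $\HH^t_\eta$ is not a measure, its layer-cake integral $\int_0^\infty\HH^t_\eta(\{h>r\})\,dr$ is not subadditive. You need that integral afterwards, to turn the increasing sets lemma into monotone convergence. Here is a counterexample. Take $Y=\{0,\dots,4\}\times[0,1]$ with $d((i,u),(j,v))=\max\{\rho(i,j),|u-v|\}$, where for $i\neq j$ we set $\rho(i,j)=1$ if $i-j\equiv\pm1 \bmod 5$ and $\rho(i,j)=2$ otherwise. Let $t=1$, $\eta=3/2$ and $A_i=\{i,i+1\}\times[0,1]$ (indices mod $5$).
\begin{itemize}
\item Each $A_i$ has diameter $1$, so $\sum_i\frac12\chi_{A_i}=\chi_Y$ and $\sum_i\frac12\HH^1_{3/2}(A_i)\le\frac52$.
\item A set of diameter $<3/2$ meets at most two fibres $\{i\}\times[0,1]$, necessarily adjacent ones, and costs at least $1$ if it meets two.
\item A cover with $p\le2$ such two-fibre sets therefore leaves $5-2p$ fibres covered by one-fibre sets, each fibre costing at least $1$. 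So every cover costs at least $5-p\ge3$, and $\HH^1_{3/2}(Y)=3$.
\end{itemize}
Hence the layer-cake integral of $\sum_i\frac12\chi_{A_i}$ is $3>\frac52$. This odd-cycle effect, fractional coverings being strictly cheaper than integral ones at a fixed scale, is what your fixed-scale step silently excludes.

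Suppose instead you use the object that is subadditive, namely the weighted premeasure $\lambda_\eta(h)=\inf\{\sum_j c_j\zeta(S_j): h\le\sum_j c_j\chi_{S_j},\ \diam S_j<\eta\}$, with $\zeta(S)=\omega_t(\diam S/2)^t$. Then your fixed-scale bound becomes trivial, but the layer-cake formula is lost. The increasing sets lemma for $\HH^t_\eta$ on sets no longer gives monotone convergence. What remains to prove is $\int^*_Y h\,d\HH^t\le\lim_{\eta\to0}\lambda_\eta(h)$ for $h(y)=\HH^{s-t}(f^{-1}(y)\cap E)$, i.e. that weighted coverings are asymptotically no cheaper than $\HH^t$. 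This is the step where the extra hypotheses of Federer's version enter, and the step the work of Davies handles in general; your outline assumes it. In addition, a Davies-type lemma with a genuine multiplicative loss would spoil the sharp constant, so your ``harmless factor'' would have to tend to $1$.

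Two side remarks. First, in every application in this paper the target is $Y=\R^t$. There the isodiametric inequality gives $\HH^t(\overline{f(E_i)})\le\omega_t(L\diam E_i/2)^t$. So you can integrate the Borel functions $\sum_i c_i\chi_{\overline{f(E_i)}}$ directly against $\HH^t=\mathcal{L}^t$ and conclude by Fatou's lemma along $\delta_k\to0$, with no increasing sets lemma at all. Second, your treatment of the second part is on the right track. However, Borel measurability of $y\mapsto\HH^{s-t}(f^{-1}(y)\cap K)$ for compact $K$ does not come from the functions $g_\delta$. It comes from the upper semicontinuity of $y\mapsto\HH^{s-t}_\delta(f^{-1}(y)\cap K)$ computed with finite open covers, using compactness of $K\setminus\bigcup_j U_j$.
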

In the previous theorem, $\mathcal H^\beta$ denotes 
the $\beta$-dimensional Hausdorff measure in metric spaces, 
with $\beta>0$.
Since $\G$ has a linear structure, we also assume that an
auxiliary scalar product is fixed. 
If not otherwise stated, in the sequel the symbol 
$\mathcal{H}^\beta$ will be used to indicate
the $\beta$-dimensional Hausdorff measure with respect to the distance given by the fixed scalar product.
The notation $\dim_\mathrm E$ denotes the Hausdorff dimension
with respect to the same distance.
We will also refer to $\mathcal H^\beta$ as the $\beta$-dimensional Euclidean Hausdorff measure.

\section{Unrectifiable surfaces with intrinsic regularity}

In this section, we will construct $C^{1,\alpha}_\mathrm H$ surfaces
in a Carnot group such that their Euclidean Hausdorff dimension is strictly greater than their topological dimension. 
We start with the following definition.

\begin{defn}\label{def:c1alphak}
    Let $\G$ be a Carnot group and $1\leq k\leq m$. We say that $S\subset \G$ is a \textit{co-horizontal (or co-abelian) $C^{1,\alpha}_{\mathrm{H}}$ surface of codimension $k$} if, for every $x\in S$, there exists a neighborhood $\Omega$ of $x$ and a map $f\in C^{1,\alpha}_{\mathrm{H}}(\Omega,\R^k)$ such that 
    \begin{equation*}
        S\cap\Omega=\{y\in \Omega\colon f(y)=0\},
    \end{equation*}
    and the h-Jacobian matrix $\nabla_\mathrm H f(y)$ has rank $k$ for every $y\in \Omega$.
\end{defn}
The terminology arises from \cite{ADDDLD24} and \cite{KozPhD}.
It can be checked that the h-Jacobian is also the matrix
of the Pansu differential, so we can apply 
\cite[Theorem 3.5.1]{KozPhD} and conclude that 
the Hausdorff dimension of a co-abelian surface has
Hausdorff dimension $Q-k$ with respect to a 
homogeneous distance $d$. We have denoted again by 
$Q$ the Hausdorff dimension of $\G$ with respect to $d$.

Throughout the section, we identify $\G$ with $\R^q$, as in Section~\ref{sect:preliminaries}. Accordingly, any two points $x,y\in\G$ are written in coordinates as 
$x=(x_1,\ldots,x_q)$ and $y~=~(y_1,\ldots,y_q)$, and we denote their horizontal components by 
\begin{equation*}
    x^1=(x_1,\ldots,x_m),\quad y^1=(y_1,\ldots,y_m).
\end{equation*} 
In the sequel, we fix an index $h\in\{m_1+1,\ldots,m_2\}$. 
With a slight abuse of notation, 
the polynomial $\QQ_h(\sum_{i=1}^mx_ie_i,\sum_{i=1}^m y_ie_i)$ of the BCH formula will be simply denoted $\QQ_h(x,y)$. 
The special choice of $h$ ensures that 
$\QQ_h(x,y)$ is antisymmetric and bilinear,
and depends only on the horizontal components $x^1$ and $y^1$.
More precisely,
\begin{equation*}
    \QQ_h(x,y)=\QQ_h(x^1,y^1)=\frac{1}{2}[x^1,y^1]_h. 
\end{equation*}
By a Taylor expansion around $y=0$, we obtain
    \begin{equation*}\label{Q_h=a_h*y}
        \begin{split}        \QQ_h(x^1,y^1)&=\QQ_h(x^1,0)+\langle\,\nabla_{y} \QQ_h(x^1,0),y^1\rangle \\        &=\sum_{i=1}^m y_i\frac{\partial \QQ_h}{\partial y_i}(x^1,0)=\sum_{i=1}^m a_{ih}(x^1)\,y_i,
        \end{split}
    \end{equation*}
    where $a_{ih}$ are defined as in \eqref{def of a_ij}. Moreover, notice that
    \begin{equation*}
        0=\QQ_h(x^1,x^1)=\sum_{i=1}^m a_{ih}(x^1)\, x_i.
    \end{equation*}
    Thus, if we define 
    \begin{equation}\label{def of a_h}
        a_h(x^1)\coloneqq (a_{1h}(x^1),\ldots,a_{mh}(x^1)),
    \end{equation}
    combining the three previous identities, we obtain
    \begin{equation}\label{eq: Q_h=<a_h,x-y>}
        \QQ_h(x,-y)=\langle a_h(x^1),\,x^1-y^1 \rangle.
    \end{equation}

\begin{remark}\label{rem: a_h not closed}
It is interesting to notice that the vector field $a_h$ 
in \eqref{def of a_h} cannot be irrotational if 
$\G$ is a noncommutative Carnot group. 
In equivalent terms, we wish to observe that 
the $1$-form associated with $a_h$ is not closed. 
Let us consider
\begin{equation}\label{ext derivative}
    d\left[a_{1h}(x^1)dx_1+\ldots+a_{mh}(x^1)dx_m\right]=\sum_{1\leq i<j\leq m} \left(\frac{\partial a_{jh}}{\partial x_i}(x^1)-\frac{\partial a_{ih}}{\partial x_j}(x^1)\right)dx_i\wedge dx_j.
\end{equation}
Taking into account \eqref{def of a_ij}, we have 
\begin{equation*}
    \frac{\partial a_{jh}}{\partial x_i}(x^1)=\frac{\partial}{\partial{x_i}}\frac{\partial}{\partial{y_j}}\mathcal{Q}_h(x^1,y^1)=-\frac{\partial}{\partial{x_j}}\frac{\partial}{\partial{y_i}}\mathcal{Q}_h(x^1,y^1)=-\frac{\partial a_{ih}}{\partial x_j}(x^1).
\end{equation*}
If by contradiction we assume that all the coefficients on the 
right-hand side of \eqref{ext derivative} vanish, 
then the previous identity gives 
\begin{equation*}
    \frac{\partial a_{jh}}{\partial x_i}(x^1) = 0 \quad\text{for all } i,j=1\ldots m.
\end{equation*}
It follows that every component $a_{jh}$ must vanish identically, hence $\mathcal{Q}_h$ is identically zero. By our choice $h> m$, in view of Proposition~\ref{prop: not stratified iff Rj=0}, this cannot occur in a noncommutative Carnot group.
\end{remark}

The previous remark motivates the application of
a result by Balogh \cite{Balogh03}, which strengthens the previous version by Alberti \cite{Alberti91}. 
It allows to find a $C^1$ potential $g$ whose gradient 
coincides with $a_h$ on a set of positive measure. 
Due to Remark \ref{rem: a_h not closed}, the condition $\nabla g=a_h$ clearly cannot hold on an open set.
This result is an important tool to prove
Theorem~\ref{main theor codim k}.
We state it in the next lemma, where the specific modulus of
continuity of the gradient of $g$ can be found at the 
end of the proof of \cite[Theorem~4.1]{Balogh03}.

\begin{lemma}\label{lemma:Balogh}
Let $Q=[0,1]^m \subset \mathbb{R}^m$ be the closed unit cube and  $X\colon Q\to\R^m$ be a $C^1$ smooth vector field. Then for any $\eps>0$, there exist a $C^1$ function $g : Q \to \R$, a closed subset $F_g\subset Q$ with $\mathcal{L}^m(F_g) > 1-\eps$, and a constant $0<K < \infty$ such that:
\begin{enumerate}[label=(\roman*)]
\item $\nabla g(z) = X(z)$ for all $z \in F_g$,
\item $|\nabla g(z_1) - \nabla g(z_2)| \le K \big(1 + |\log |z_1-z_2||\big)^K |z_1-z_2|$ for all $z_1, z_2 \in Q$.
\end{enumerate}
\end{lemma}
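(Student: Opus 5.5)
The plan is an iterative Alberti-type correction scheme on dyadic grids. We build $g=\sum_{k\ge0}h_k$, where each $h_k$ is smooth. After $k$ steps the residual $R_k\coloneqq X-\nabla(h_0+\cdots+h_{k-1})$ should be small, of order $k^{C}2^{-k}$, on a closed "good" set $F_k$. The sets $F_k$ will be nested and lose only a summable amount of measure. At the end we put $F_g=\bigcap_k F_k$. There $\nabla g=\lim_k\nabla(h_0+\cdots+h_{k-1})=X$, which gives (i), and $\mathcal L^m(F_g)>1-\eps$. Property (ii) will follow from explicit bounds on $\|\nabla h_k\|_\infty$ and $\operatorname{Lip}(\nabla h_k)$.

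\emph{Single step.} Fix $r_k=2^{-k}$, let $\mathcal D_k$ be the dyadic cubes of side $r_k$ in $Q$, and set $\delta_k=\eps/(4k^2)$ for $k\ge1$, with $\delta_0=\eps/4$. For $P\in\mathcal D_k$ with center $c_P$, choose a cutoff $\chi_P\in C^\infty_c(P)$ with $0\le\chi_P\le1$, $\chi_P=1$ on the concentric cube $P'$ of side $(1-\delta_k)r_k$, and
\[
|\nabla\chi_P|\lesssim (\delta_kr_k)^{-1},\qquad |D^2\chi_P|\lesssim (\delta_kr_k)^{-2}.
\]
Define
\[
h_k(z)=\sum_{P\in\mathcal D_k}\langle R_k(c_P),z-c_P\rangle\,\chi_P(z).
\]
Then $\nabla h_k=R_k(c_P)$ on $P'$. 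Since $|z-c_P|\lesssim r_k$ on $P$, we get
\[
\|\nabla h_k\|_\infty\lesssim \delta_k^{-1}\sup|R_k(c_P)|,\qquad \operatorname{Lip}(\nabla h_k)\lesssim \delta_k^{-2}r_k^{-1}\sup|R_k(c_P)|,
\]
where the suprema run over the cubes that meet the good set (on the others we set the coefficient to zero). We take $F_{k+1}=F_k\cap\bigcup_P P'$. This removes measure at most $m\delta_k$, and the sum of these losses is below $\eps$ after adjusting constants.

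\emph{Inductive bounds.} On $P'\cap F_k$ the new residual is $R_{k+1}=R_k-R_k(c_P)$, so
\[
|R_{k+1}|\le \sqrt m\,r_k\operatorname{Lip}(R_k),\qquad \operatorname{Lip}(R_{k+1})\le \operatorname{Lip}(X)+\sum_{j\le k}\operatorname{Lip}(\nabla h_j).
\]
This is the main obstacle: the $\delta_k^{-1}$ and $\delta_k^{-2}$ losses compound, and one must check the recursion does not explode. By induction we aim for
\[
|R_k|\le A k^{K}r_k \text{ on } F_k,\qquad \operatorname{Lip}(R_k)\le Ak^{K}.
\]
The key point is that $r_k^{-1}|R_k|\lesssim k^K$, so
\[
\operatorname{Lip}(\nabla h_k)\lesssim k^{4}\cdot k^{K}.
\]
This is polynomial in $k$ and sums to $\lesssim k^{K+5}$. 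The exponent $K$ can then be fixed so that the induction closes, possibly after passing to a sparser but still geometric subsequence of scales. Likewise
\[
\|\nabla h_k\|_\infty\lesssim k^{K+2}2^{-k},
\]
which is summable. Hence $\sum_k h_k$ converges in $C^1$, and $g$ is $C^1$.

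\emph{Modulus of continuity.} Given $z_1,z_2$ with $2^{-N-1}<|z_1-z_2|\le 2^{-N}$, split
\[
|\nabla g(z_1)-\nabla g(z_2)|\le \sum_{k<N}\operatorname{Lip}(\nabla h_k)|z_1-z_2|+2\sum_{k\ge N}\|\nabla h_k\|_\infty,
\]
with $\nabla h_0$ included in the first sum. The first sum is $\lesssim N^{K+5}|z_1-z_2|$ and the second is $\lesssim N^{K+2}2^{-N}\lesssim N^{K+2}|z_1-z_2|$. Since $N\simeq 1+|\log|z_1-z_2||$, this gives (ii) after renaming $K$. Closedness of $F_g$ holds because each $F_k$ is a finite intersection of finite unions of closed cubes.
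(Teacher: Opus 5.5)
You have a genuine gap in the inductive step, and as written the argument does not even show $g\in C^1$. (The paper does not prove this lemma; it cites Balogh's Theorem~4.1 and takes the modulus in (ii) from the end of his proof, so your direct Alberti-type scheme is your own route.) Your step you flag as the main obstacle does not close. From the hypothesis $\operatorname{Lip}(R_j)\le Aj^K$ you get $\operatorname{Lip}(\nabla h_j)\lesssim \eps^{-2}Aj^{K+4}$. Hence $\operatorname{Lip}(R_{k+1})\lesssim \operatorname{Lip}(X)+\eps^{-2}Ak^{K+5}$, and for large $k$ this is not $\le A(k+1)^K$, whatever $K$ and $A$ are: the exponent rises by five at every step. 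Solved honestly, the recursion $L_{k+1}\le \operatorname{Lip}(X)+C\sum_{j\le k}\delta_j^{-2}L_j$, with $L_j=\operatorname{Lip}(R_j)$, only gives growth like $\prod_{j<k}(1+C\delta_j^{-2})$, which is faster than any exponential. Then $\sum_k\|\nabla h_k\|_\infty\lesssim\sum_k\delta_k^{-1}r_kL_k$ diverges. A sparser geometric sequence of scales does not help, since with ratio $\lambda$ the amplification per step becomes $\lambda\delta_k^{-2}$. The root cause is that you bound $|R_k(c_P)|$ through the \emph{global} Lipschitz constant of $R_k$. That constant is genuinely unbounded in $k$: $\nabla h_{k-1}$ typically has slope of order $\delta_{k-1}^{-2}$ in the transition layers $P\setminus P'$, and feeding $r_k\operatorname{Lip}(R_k)$ into the coefficients of $h_k$ is what makes the losses compound. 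A related defect is the choice of the center $c_P$. When $r_k\ll\delta_jr_j$, a generation-$k$ cube meeting $F_k$ can have its center in the transition layer of a generation-$j$ ancestor. Then $|R_k(c_P)|$ can exceed $r_k$ by a factor of order $\delta_j^{-2}$.

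The missing idea is localization. Since $F_k\subset\bigcap_{j<k}\bigcup_{P\in\mathcal D_j}P'$ and $\nabla h_j$ is constant on each $P'$, any two points $z,w\in P\cap F_k$ with $P\in\mathcal D_k$ lie in the same $P_j'$ for every $j<k$. Hence $R_k(z)-R_k(w)=X(z)-X(w)$ there. So take as coefficient $R_k(z_P)$ for some $z_P\in P\cap F_k$, instead of $R_k(c_P)$, and zero if $P\cap F_k=\emptyset$. Then $|R_{k+1}|\le\sqrt m\operatorname{Lip}(X)r_k$ on $F_{k+1}$ and $|R_k(z_P)|\le\sqrt m\operatorname{Lip}(X)r_{k-1}$, with constants independent of $k$. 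Consequently $\operatorname{Lip}(\nabla h_k)\lesssim\delta_k^{-2}$ and $\|\nabla h_k\|_\infty\lesssim\delta_k^{-1}r_k$, with no compounding; the steep part of each $\nabla h_k$ lives only in the discarded layers. With these bounds, three parts of your argument go through: the $C^1$ convergence of $\sum_k h_k$, the identity $\nabla g=X$ on $F_g$, and your splitting at scale $2^{-N}$. Together they give (ii) with exponent $5$, after enlarging $K$ to absorb constants. Take for instance $\delta_k=\eps/(4mk^2)$ so that the total measure removed stays below $\eps$.
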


Another crucial tool is the following.
\begin{lemma}\label{lemma:Rajala-alpha}
For every $\gamma,\beta\in(0,1)$, there exists a function $\psi: \mathbb{R} \to \mathbb{R}$ such that
\begin{enumerate}[label=\emph{(\roman*)}]
\item For all $t \in \mathbb{R}$, $\dim_\mathrm E\psi^{-1}(t)\geq1-\gamma$;
\item For each $M \ge 1$, we have
\begin{equation*}
\lim_{r \to 0^+} \frac{e^{M\left|\log r\right|^\beta}}{r^{\gamma}} \sup \big\{ |\psi(x)-\psi(y)| : |x-y|\le r \big\} = 0.
\end{equation*}
\end{enumerate}
\end{lemma}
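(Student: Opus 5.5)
The plan is to build $\psi$ by first constructing a function $\psi_0\colon[0,1]\to[0,1]$ with $\psi_0(0)=0$, $\psi_0(1)=1$, all of whose level sets $\psi_0^{-1}(t)$, $t\in[0,1]$, have $\dim_\mathrm E\ge 1-\gamma$, and whose modulus of continuity $\omega(r)=\sup\{|\psi_0(x)-\psi_0(y)|:|x-y|\le r\}$ satisfies (ii). We then extend it "staircase-wise" by
\[
\psi(x+n)=\psi_0(x)+n,\qquad x\in[0,1],\ n\in\Z .
\]
This is consistent at the integers because $\psi_0(1)+n=\psi_0(0)+n+1$, and the extension is continuous. It settles both global requirements at once:
\begin{itemize}
\item Every $t\in\R$ lies in some $[n,n+1]$, and $\psi^{-1}(t)\supset n+\psi_0^{-1}(t-n)$, so (i) holds for every real $t$.
\item If $|x-y|\le r\le 1$, then either $x,y$ lie in one unit interval, or in two adjacent ones separated by an integer $p$. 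In the second case $|\psi(x)-\psi(y)|\le|\psi(x)-\psi(p)|+|\psi(p)-\psi(y)|\le 2\omega(r)$. Hence the global modulus is at most $2\omega(r)$, and (ii) transfers from $\psi_0$ to $\psi$ for every $M$.
\end{itemize}

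For $\psi_0$ we follow a Rajala-type zigzag construction, taking $\psi_0$ as the uniform limit of piecewise linear functions $\psi_k$. We choose integers $1\le a_k<b_k$ with $b_k/a_k$ odd. At step $k+1$, each linear piece of $\psi_k$ sits on an interval $I$ of length $\ell_k$ and has increment $\Delta_k$. We replace it by a zigzag inside the box $I\times(\text{range})$. The interval is split into $b_k$ equal pieces and the range into $a_k$ equal bands. The path crosses each band $b_k/a_k$ times, alternating up and down, and moves from band to band in a monotone way. It therefore starts and ends at the same endpoints as the old piece, so $\psi_{k+1}=\psi_k$ at all previous nodes and $\psi_k(0)=0$, $\psi_k(1)=1$ are preserved. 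This gives
\[
\ell_k=\prod_{j<k}b_j^{-1},\qquad \Delta_k=\prod_{j<k}a_j^{-1},
\]
and $\|\psi_{k+1}-\psi_k\|_\infty\le\Delta_k$, so the limit exists and $\psi_0=\psi_k$ on the nodes of generation $k$.

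The parameters are tuned so that $\Delta_k=\ell_k^{\gamma}e^{-\eps_k}$ with $\eps_k$ growing faster than every $M|\log\ell_k|^\beta$, but still with $\eps_k=o(|\log\ell_k|)$. For example $\eps_k\approx|\log\ell_k|^{(1+\beta)/2}$, achieved by taking $a_k\approx b_k^{\gamma_k}$ with $\gamma_k\downarrow\gamma$ slowly. In addition, $b_k$ should grow slowly enough that $\log b_k=o(|\log\ell_k|^\beta)$.

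For (i), fix $t\in[0,1]$. Every generation-$k$ interval on which the graph passes through level $t$ contains at least $b_k/a_k$ children of generation $k+1$ whose pieces also pass through $t$. If $t$ is a band endpoint, one of the adjacent bands is chosen consistently, and since level sets are closed, limits belong to $\psi_0^{-1}(t)$. This yields a Cantor subset of $\psi_0^{-1}(t)$. Putting the uniform mass distribution on the tree and applying the mass distribution principle, we get
\[
\dim_\mathrm E\psi_0^{-1}(t)\ \ge\ \liminf_k\frac{\sum_{j<k}\log(b_j/a_j)}{\sum_{j\le k}\log b_j}=1-\gamma,
\]
using $\log b_k=o(\sum_{j<k}\log b_j)$ to absorb the gap between consecutive scales.

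The main obstacle is (ii) at intermediate scales $\ell_{k+1}\le r<\ell_k$. Here $x$ and $y$ lie in at most two adjacent generation-$(k+1)$ subintervals, or are separated by at most $\lceil r/\ell_{k+1}\rceil$ of them. We estimate
\[
|\psi_0(x)-\psi_0(y)|\le (\Delta_{k+1}/\ell_{k+1})\,r+4\Delta_{k+1}
\]
using the slope of $\psi_{k+1}$ and the bound $\sum_{j>k}\Delta_j\lesssim\Delta_{k+1}$. This bound is then compared with $r^\gamma e^{-M|\log r|^\beta}$. Since the slope term is at most $\Delta_{k+1}(r/\ell_{k+1})^{\gamma}\,b_k^{1-\gamma}$ and $\Delta_{k+1}\le r^\gamma e^{-\eps_{k+1}}$, the ratio in (ii) is bounded by
\[
C\,b_k\,e^{M|\log r|^\beta-\eps_{k+1}},
\]
which tends to $0$ because $\log b_k+M|\log\ell_{k+1}|^\beta=o(\eps_{k+1})$. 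Checking that integer choices of $a_k,b_k$ with $b_k/a_k$ odd can realize these asymptotics simultaneously is the delicate bookkeeping step. I would handle it first by taking $b_k=a_k(2c_k+1)$ with $a_k,c_k$ slowly increasing integer sequences, and adjusting them to hit $\gamma_k$.
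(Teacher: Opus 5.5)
Your proposal is correct and follows essentially the paper's route. The staircase extension $\psi(x+n)=\psi_0(x)+n$ with the factor-$2$ triangle-inequality bound is exactly how the paper derives the lemma from Theorem~\ref{thm: rajala holder+}, and your zigzag construction with $\Delta_k=\ell_k^{\gamma}e^{-\eps_k}$, $|\log\ell_k|^{\beta}\ll\eps_k\ll|\log\ell_k|$, plus the mass-distribution lower bound on level sets, is a variant of the Rajala construction proved there. The only real difference is the bookkeeping you flagged as delicate: the paper keeps all parameters bounded, using three crossings per band and alternating between $m$ and $m+1$ bands according to $\varphi(k)=\lfloor\rho k+(1-\rho)k^{\beta^*}\rfloor$, which realizes $\eps_k\asymp k^{\beta^*}\asymp|\log\ell_k|^{\beta^*}$ with $\beta<\beta^*<1$ and makes the integer choices trivial.
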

The previous lemma is a straightforward consequence of
Theorem~\ref{thm: rajala holder+}.
We start from $\tilde \psi\colon[0,1]\to[0,1]$ given
by this theorem. 
We define $\psi(x)=k+\tilde \psi(x-k)$ for every $k\in\Z$ and $x\in[k,k+1]$. Since $\tilde{\psi}(0)=0$ and $\tilde{\psi}(1)=1$, the function $\psi$ is continuous. Moreover, property (i) is preserved under translations.
    For property (ii), we can restrict to $r\leq 1$. Let $x,y\in\R$ with $|x-y|\leq r$. If $x,y$ lie in the same interval $[k,k+1]$, property (ii) follows directly from Theorem \ref{thm: rajala holder+}. 
    If instead $x\in [k-1,k]$ and $y\in[k,k+1]$, then the triangle inequality gives 
    $$|\psi(x)-\psi(y)|\leq |\psi(x)-\psi(k)|+|\psi(k)-\psi(y)|\leq 2\sup_{\substack{|z-w|\leq r\\z,w\in [0,1]}}|\tilde \psi(z)-\tilde \psi(w)|,$$ 
    hence concluding the proof of (ii) in Lemma~\ref{lemma:Rajala-alpha}.

The following lemma represents an important step in
the proof of Theorem~\ref{main theor codim k}.

\begin{lemma}\label{lem: extension}
Let $h\in\{m+1,\ldots,m_2\}$, $0<\eps<1$, $Q=[0,1]^m$, and $T\subset\R^{q-m}$ be an arbitrary nonempty compact.
We consider the vector field
\begin{equation}\label{eq:Xah}
X= a_h|_Q:Q\to\R^m,
\end{equation}
where $a_h$ is defined in \eqref{def of a_h}. 
Let $g:Q\to\R$ and $F_g\subset Q$ be as in Lemma~\ref{lemma:Balogh},
with $X$ as in \eqref{eq:Xah}.
Let $0\le \alpha<1$ and consider any $\psi\colon\R\to\R$
that satisfies conditions (i) and (ii) of
Lemma~\ref{lemma:Rajala-alpha},
with parameters $\gamma=\frac{1+\alpha}{2}$ and $0<\beta<1$.
Then there exists $\tilde f\in C^{1,\alpha}_{\mathrm{H}}(\G)$ such that for every $x\in F_g\times T$ we have
\begin{enumerate}[label=(\roman*), font=\upshape]
    \item $\tilde f(x)=\psi(x_h-g(x^1))$;
    \item $\nabla_{\mathrm{H}} \tilde f(x)=0$.
\end{enumerate}
\end{lemma}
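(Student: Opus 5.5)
The plan is to apply Theorem~\ref{whitney alpha} on the closed set $F=F_g\times T$, which is closed and bounded because $F_g$ is closed in $Q$ and $T$ is compact. We take $f(x)=\psi(x_h-g(x^1))$ and $\varphi\equiv 0$ on $F$. Condition (i) holds because $\psi$ is continuous and $F$ is compact. Condition (iii) is trivial. Once (ii) is verified, Theorem~\ref{whitney alpha} gives $\tilde f\in C^{1,\alpha}_\mathrm H(\G)$ with $\tilde f=f$ and $\nabla_\mathrm H\tilde f=0$ on $F$. The only real work is therefore the estimate
\[
|\psi(x_h-g(x^1))-\psi(y_h-g(y^1))|\le M\,d(x,y)^{1+\alpha},\qquad x,y\in F.
\]

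The key step is to bound $|(x_h-g(x^1))-(y_h-g(y^1))|$ by roughly $d(x,y)^2$, up to logarithmic losses. Since $h$ has degree $2$, the $h$-th coordinate of $y^{-1}x=(-y)x$ equals $x_h-y_h+\QQ_h(-y,x)$. By \eqref{eq: Q_h=<a_h,x-y>} and antisymmetry, $\QQ_h(-y,x)=\QQ_h(x,-y)=\langle a_h(x^1),x^1-y^1\rangle$. Hence
\[
x_h-y_h=(y^{-1}x)_h-\langle a_h(x^1),x^1-y^1\rangle .
\]
On the other hand, Taylor's formula with the modulus in Lemma~\ref{lemma:Balogh}(ii), together with $\nabla g(x^1)=a_h(x^1)$ for $x^1\in F_g$, gives
\[
g(x^1)-g(y^1)=\langle a_h(x^1),x^1-y^1\rangle+O\big(K(1+|\log|x^1-y^1||)^K|x^1-y^1|^2\big).
\]
Subtracting, the linear terms cancel. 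Using $|(y^{-1}x)_h|\le c_h^{-2}d(x,y)^2$ and $|x^1-y^1|\le d(x,y)$, we obtain for $r=d(x,y)$ small
\[
|(x_h-g(x^1))-(y_h-g(y^1))|\le C(1+|\log r|)^K r^2=:\rho(r).
\]
This cancellation is exactly where the choice of the vector field $X=a_h$ in Lemma~\ref{lemma:Balogh} is used.

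Next we apply Lemma~\ref{lemma:Rajala-alpha}(ii) with $\gamma=\frac{1+\alpha}{2}$. We get $|\psi(s)-\psi(t)|\le \omega(\rho)$, where $\omega(\rho)\le \rho^\gamma e^{-M|\log\rho|^\beta}$ for $\rho$ small and any fixed $M$. Then
\[
\omega(\rho(r))\le C^\gamma(1+|\log r|)^{K\gamma}\,r^{1+\alpha}\,e^{-M|\log \rho(r)|^\beta}.
\]
Since $|\log\rho(r)|\ge |\log r|$ for small $r$, the factor $e^{-M|\log r|^\beta}$ dominates the polylogarithmic factor $(1+|\log r|)^{K\gamma}$, because $e^{M|\log r|^\beta}$ grows faster than any power of $|\log r|$. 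Thus the left side of the desired estimate is $\le r^{1+\alpha}$ for $r<r_0$. For $r\ge r_0$ the estimate follows from the boundedness of $\psi$ on the compact image, with a constant depending on $r_0$ and $\diam F$.

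The main obstacle is the bookkeeping in the third paragraph. One must check that the logarithmic loss from Balogh's modulus is really absorbed by the sub-power improvement in Lemma~\ref{lemma:Rajala-alpha}(ii), and handle small $|x^1-y^1|$ relative to $r$ uniformly; this uses the monotonicity of $t\mapsto t(1+|\log t|)^K$ near $0$. One must also make sure the Taylor estimate for $g$ is valid on the convex cube $Q$ containing the segment $[x^1,y^1]$.
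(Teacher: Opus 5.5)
Your strategy is the same as the paper's. You apply the Whitney extension on the compact set $F=F_g\times T$ with $\varphi\equiv 0$, cancel the first-order term of $g$ against $\QQ_h$ using $\nabla g=a_h$ on $F_g$, and absorb Balogh's logarithmic loss into the factor $e^{-M|\log r|^\beta}$ of Lemma~\ref{lemma:Rajala-alpha}. Two points need to be fixed, though.

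\textbf{A sign error in the key cancellation.} Antisymmetry gives $\QQ_h(-y,x)=-\QQ_h(x,-y)=-\langle a_h(x^1),x^1-y^1\rangle$, not $+$. So the correct identity is $x_h-y_h=(y^{-1}x)_h+\langle a_h(x^1),x^1-y^1\rangle$. With your two identities exactly as written, subtracting leaves $(y^{-1}x)_h-2\langle a_h(x^1),x^1-y^1\rangle+O(\cdot)$. That quantity is only $O(d(x,y))$, and the argument would then give nothing better than $|f(x)-f(y)|\lesssim d(x,y)^{\frac{1+\alpha}{2}}$. With the correct sign you obtain exactly the paper's splitting
\[
\bigl|(x_h-g(x^1))-(y_h-g(y^1))\bigr|\le \bigl|x_h-y_h+\QQ_h(-y,x)\bigr|+\bigl|g(x^1)-g(y^1)-\QQ_h(x,-y)\bigr|,
\]
and the rest of your estimate goes through as you describe.

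\textbf{The case $\alpha=0$ is not covered.} The lemma includes $\alpha=0$, but Theorem~\ref{whitney alpha} is stated only for $\alpha\in(0,1]$. At $\alpha=0$, a big-$O$ Whitney condition such as $|f(x)-f(y)|\le M\,d(x,y)$ does not yield a $C^1_{\mathrm H}$ extension with continuous horizontal gradient. For $\alpha=0$ you must use the $C^1_{\mathrm H}$ Whitney theorem \cite[Theorem~5.2]{FSSC5}, whose hypothesis is the little-$o$ condition $|f(x)-f(y)|=o(d(x,y))$, uniformly on $F$. Your computation already delivers this, because it shows
\[
|f(x)-f(y)|\,d(x,y)^{-1-\alpha}\le C(1+|\log r|)^{K\gamma}e^{-M|\log r|^\beta}\to 0 \quad\text{as } r=d(x,y)\to0 .
\]
Record this limit rather than only the bound by $r^{1+\alpha}$, and invoke the appropriate extension theorem for $\alpha=0$, as the paper does.
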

\begin{proof}
We define $F=F_g\times T$ and $f^*\colon F\to\R$ as 
\begin{equation}\label{eq:f^*x}
    f^*(x)\coloneqq \psi(x_h-g(x^1)).
\end{equation}
 We aim to apply Theorem~\ref{whitney alpha} to $f^*$ and $\varphi\equiv0$ on $F$.
By the compactness of $F$ and the special choice of $\varphi$, 
conditions (i) and (iii) of Theorem~\ref{whitney alpha} are immediately satisfied.  It suffices to verify condition (ii), that is,
\begin{equation}\label{whitney condition}
    |f^*(x) - f^*(y)| \leq M d(x,y)^{1+\alpha} \quad \text{for every } x,y\in F.
\end{equation}
By (ii) of Lemma~\ref{lemma:Rajala-alpha}, there exists $C>0$ such that
\begin{equation}\label{eq: f(p)-f(p')}
    |f^*(x)-f^*(y)|\leq C\frac{|x_h-g(x^1)-(y_h-g(y^1))|^{\frac{1+\alpha}{2}}}{e^{M|\log |x_h-g(x^1)-(y_h-g(y^1))||^\beta}},
\end{equation}
taking into account \eqref{eq:f^*x}. 
By the antisymmetry of $\QQ_h$ we have
\begin{gather*}
    \left|x_h-g( x^1)-(y_h-g(y^1))\right|\\
    \leq \left|x_h-y_h+\QQ_{h}(-y,x)\right|+
    \left|-g( x^1)+g(y^1)+\QQ_{h}(x,-y)\right|.
\end{gather*}
By definition of $d$ in \eqref{def of d}, since the $h$-th component has degree $2$, 
we obtain
\begin{equation*}\label{eq:A}
    \left|x_h-y_h+\QQ_{h}(-y,x)\right|\leq \left(\frac{d(x,y)}{c_h}\right)^2.
\end{equation*}
Now we use \eqref{eq: Q_h=<a_h,x-y>} and Lemma~\ref{lemma:Balogh}
to get 
\begin{equation*}\label{eq:B}
\begin{split}
    &|g(x^1)-g(y^1)-\QQ_{h}(x^1,-y^1)|=|g(x^1)-g(y^1)-\langle a_h(x^1),x^1-y^1\rangle|\\
    =&|g(x^1)-g(y^1)-\langle \nabla g(x^1),x^1-y^1\rangle|\leq |\langle \nabla g(\xi)-\nabla g(x^1),x^1-y^1\rangle|\\
    \leq& K'|x^1-y^1||\xi-x^1|(1+|\log|\xi-x^1||)^K\leq Kd(x,y)^2(1+|\log (Cd(x,y))|)^K.
\end{split}
\end{equation*}
Here $\xi$ is a suitable point in the line segment joining $x^1$ and $y^1$. The last inequality follows from \eqref{comparison between d_E and d_CC} and from the fact that the function $r\mapsto r(1+|\log r|)^K$ is increasing on an interval $(0,c_K)$, therefore it holds when $d(x,y)\leq c_K$.
Combining the two previous inequalities, we obtain
\begin{equation}\label{eq: t-g(x)-s+g(y)}
     \left|x_h-g(x^1)-(y_h-g(y^1))\right|\leq C_1 d(x,y)^2(1+|\log (C_2d(x,y))|)^K.
\end{equation}
for some $C_1,C_2>0$.
Since the function $r^{\frac{1+\alpha}{2}}e^{-M\left|\log r\right|^\beta}$ is strictly increasing for $0<r\leq1$, combining 
\eqref{eq: t-g(x)-s+g(y)} and \eqref{eq: f(p)-f(p')} we get

\begin{equation}\label{eq whitney estimate}
    \frac{|f^*(x)-f^*(y)|}{d(x,y)^{1+\alpha}}\leq C
    \frac{\left(C_1 (1+|\log (C_2d(x,y))|)^ K\right)^{\frac{1+\alpha}{2}}}{ e^{M\left|\log\left( C_1 d(x,y)^2(1+|\log (C_2d(x,y))|)^K\right)\right|^\beta}}=C\Upsilon(d(x,y)).
\end{equation}
We observe that
\begin{equation}\label{eq:0whitney}
\limsup_{\lambda\to0}\Upsilon(\lambda)\leq \limsup_{\lambda\to0}\frac{\left|\log \lambda\right|^{K{\frac{1+\alpha}{2}}}}{e^{|\log \lambda|^{\beta}}}=\lim_{\tau\to \infty}\frac{\tau^{K{\frac{1+\alpha}{2\beta}}}}{e^{\tau}}=0,
\end{equation}
where in the last step we used the substitution $\tau=|\log\lambda|^\beta$.
Thus, the boundedness of $\Upsilon$ near $0$, together with
\eqref{eq whitney estimate}, implies the Whitney's condition \eqref{whitney condition}. When $\alpha=0$, the limit \eqref{eq:0whitney} allows us to apply \cite[Theorem 5.2]{FSSC5}.
As a consequence, there exists an extension 
$\tilde{f}\in C^{1,\alpha}_{\mathrm{H}}(\G)$ of $f^*$ such that $(\nabla_{\mathrm{H}} \tilde{f})\big|_{F}\equiv 0$.
\end{proof}

\begin{remark}\label{rem:idea_sing_hyp}
Let $x\in\R^q$, and denote by $\tilde{x}\in\R^{q-1}$ the vector $x$ with the $h$-th component omitted, and by $x^1\in\R^m$ the vector whose 
first $m$ components coincide with those of $x$.
By Lemma~\ref{lemma:Balogh}, and following its notation, 
we get a $C^1$ function $G\colon Q\times \R^{q-m-1}\to\R$, 
which is constant with respect to the variables 
$(x_{m+1},\ldots,x_{h-1},x_{h+1},\ldots,x_q)$, 
such that $G(\tilde x)=g(x^1)$, and
    \begin{equation*}
        \nabla G(\tilde{x})=
        (\nabla g(x^1),0,\ldots,0)=(a_{1h}(x^1),\ldots,a_{mh}(x^1),0,\ldots,0)
    \end{equation*}
for all $\tilde x\in F_g\times\R^{q-m-1}$.
We have chosen the vector field 
$X$ of Lemma~\ref{lemma:Balogh} equal to the
one of \eqref{eq:Xah}.  
The measurable subset $F_g$ of the $m$-dimensional cube
$Q$ is such that $\mathcal L^m(Q\setminus F_g)$ can be 
chosen to be arbitrarily small. 
Let $\Gamma\colon Q\times\R^{q-m-1}\to\R^q$ denote the graph map
    \begin{equation*}
    \Gamma(\tilde{x})=(x_1,\ldots,x_{h-1}, G(\tilde{x}),x_{h+1},\ldots,x_q)
    \end{equation*}
and notice that the graph $\Sigma$ of $G$ is a $C^1$ submanifold
of $\G$. 
For $\tilde x\in F_g\times \R^{q-m-1}$ and
$i=1,\ldots,m$, we have
    \begin{align*}        
    X_i(\Gamma(\tilde x))&=\partial_{x_i}+a_{ih}(\Gamma(\tilde x))\partial_{x_h}+\sum_{\substack{j=m+1\\j\neq h}}^q a_{ij}(\Gamma(\tilde x))\partial_{x_j} \\
    &=\partial_{x_i}+\partial_iG(\tilde x)\partial_{x_h}+\sum_{\substack{j=m+1\\j\neq h}}^q a_{ij}(\Gamma(\tilde x))\partial_{x_j}=\partial_i\Gamma+\sum^q_{\substack{j=m+1\\j\neq h}} a_{ij}(\Gamma(\tilde x))\partial_j\Gamma.
    \end{align*}
This shows that for every $\tilde x\in F_g\times \R^{q-m-1}$ the horizontal vectors at $\Gamma(\tilde x)$ are contained in $T_{\Gamma(\tilde x)}\Sigma$.
In other words, $\Gamma(\tilde x)$ are characteristic points
of $\Sigma$ and by the standard area formula one easily notices that
they have positive $(q-1)$-dimensional Euclidean Hausdorff measure.
\end{remark}

\begin{proof}[Proof of Theorem~\ref{main theor codim k}]
We follow the notation in the statement of Lemma~\ref{lem: extension}. 
Let $C_g>0$ be such that $\|g\|_{\infty}\leq C_g$. Let $K\subset\R^{q-m-1}$ be an arbitrary compact set with $\L^{q-m-1}(K)>0$, define $t=(t_1,\ldots,t_{q-m})\in\R^{q-m}$ and
\[
T=\Big\{t\in\R^{q-m}: (t_1,\ldots,t_{h-m-1},t_{h-m+1},\ldots t_{q-m})\in K,\,
t_{h-m}\in [-C_g-2,C_g+2]\Big\}.
\]
Lemma~\ref{lem: extension} gives us $\tilde{f}\in C^{1,\alpha}_{\mathrm{H}}(\G)$ such that for every $x\in F_g\times T$ we have 
\begin{enumerate}[label=\normalfont{(\roman*)}]
    \item $\tilde f(x)=\psi(x_h-g(x^1))$,
    \item $\nabla_{\mathrm{H}} \tilde f(x)=0$.
\end{enumerate}
We define
$f=(f_1,\ldots,f_{k})\colon \G\to\R^k$ by
\begin{equation*}
    f_1(x)=\tilde{f}(x)-x_1\quad\text{and}\quad f_{i+1}(x)=x_{i+1}\quad\text{for }i=1,\ldots,k-1.
\end{equation*}
Clearly the h-Jacobian matrix $\nabla_\mathrm Hf(x)$ has maximal rank $k$ for every $x\in F_g\times T$, and thus in an open set $\Omega\supset F_g\times T$.
First, we assume $k>1$. By Fubini's Theorem we can find
 $a=(a_2,\ldots,a_k)\in[0,1]^{k-1}$ such that
 \[
 F_{g,a}=\{(x_1,x_{k+1},\ldots,x_m)\in[0,1]^{m-k+1}\colon
 (x_1,a_2,\ldots,a_k,x_{k+1},\ldots,x_m)\in F_g\}
 \]
has positive $m-k+1$-dimensional Lebesgue measure.
We set $a_0=(0,a_2,\ldots,a_k)\in\R^k$ and define  
$S=\Omega\cap f^{-1}(a_0)$.
We notice that the following inclusion holds
\begin{equation*}
S\supset\{x\in F_g\times T\colon x_h\in \psi^{-1}(x_1)+g(x^1)\text{ and }x_{i+1}=a_{i+1} \text{ for }i=1,\ldots,k-1\}=S'_a.
\end{equation*}
Taking into account the discussion about $\psi$ below Lemma~\ref{lemma:Rajala-alpha}, we have
\[
\psi^{-1}(z_1)+g(z)\subset [-2-C_g,C_g+2]
\]
for every $z\in Q$. It follows that $S'_a\neq\emptyset$, and therefore $S$ is nonempty. 
By definition of $\Omega$, we notice that $S$ is also a co-horizontal surface. Let $\pi_k\colon \R^q\to\R^{q-k}$  be projection that removes the components $x_2,x_3,\ldots,x_k,x_h$. Thus, we obtain 
\[
\pi_k(S)\supset\pi_k(S'_a)=F_{g,a}\times K.
\]

Now, we arbitrarily fix $0<\eps<1-\frac{1+\alpha}{2}$. 
Due to the Eilenberg inequality (Lemma~\ref{lemma:Eilenberg}),
there exists $c_\eps>0$ such that 
\begin{align*}
c_\eps \mathcal{H}^{q-k+1-\frac{1+\alpha}{2}-\eps}&(S)\geq \int_{F_{g,a}\times K} \mathcal{H}^{1-\frac{1+\alpha}{2}-\eps}(\pi_k^{-1}(y)\cap S'_a)d\L^{q-k}(y)\\
    =&\int_{F_{g,a}\times K} \mathcal{H}^{1-\frac{1+\alpha}{2}-\eps}\left(\psi^{-1}(y_1)+g(y_1,a,y_{k+1},\ldots,y_m)\right)d\L^{q-k}(y)\\   =&\int_{K}\left(\int_{F_{g,a}} \mathcal{H}^{1-\frac{1+\alpha}{2}-\eps}\left(\psi^{-1}(z_1)\right)d\mathcal L^{m-k+1}(z)\right)d\L^{q-m-1}=+\infty,
\end{align*}
where the last equality holds since  $\dim_\mathrm E(\psi^{-1}(t)) \ge 1 - \frac{1+\alpha}{2}$ for every $t\in \R$ by (i) of Lemma~\ref{lemma:Rajala-alpha}, as well as $\mathcal L^{m-k+1}(F_{g,a})>0$ and $\L^{q-m-1}(K)>0$. Therefore, from the arbitrariness of $\eps$, we conclude that $\dim_\mathrm ES\geq q-k+\frac{1-\alpha}{2}$.

For $k=1$, the argument above holds and simplifies by taking $F_{g}$ in place of $F_{g,a}$.
\end{proof}

\section{The case of positively graded groups}\label{sec:graded}

In this section, we extend some considerations about our 
problem in the larger class of positively graded nilpotent Lie groups. They differ from Carnot groups since the only requirement on the
Lie algebra is the direct decomposition
\begin{equation*}
\Lie(\G)=V_1\oplus V_2\oplus\cdots\oplus V_s,
\end{equation*}
with the grading property
\begin{equation*}\label{eq: graded inclusion}
    [V_i,V_j]\subset V_{i+j}\quad \text{ for all $i,j\ge1$,}
\end{equation*}
with $V_j=\{0\}$ whenever $j>s$.
The important difference with respect to 
Carnot groups is that we may have $\QQ_h\equiv 0$ for some index $h>m$,
as proved in Proposition~\ref{prop: not stratified iff Rj=0}. 
Thus, the construction of an intrinsic regular hypersurface $S$ that is irregular in the standard sense does not rely on the results by Alberti and Balogh, \cite{Alberti91,Balogh03}.
Indeed, a purely Euclidean construction suffices, as it will be clear from the proof of the next proposition.

\begin{prop}\label{prop: graded groups construction}
Let $\G$ be a positively graded group which is not stratified and let $0<\alpha<1$. Then there exists $f\colon\G\to\R$ which is a locally $\alpha$-H\"older continuous function with respect to the Euclidean distance on $\G$ and such that $X_if:\G\to\R$ is smooth for each $i=1,\ldots,m$, the horizontal gradient $\nabla_{\mathrm{H}}f$ is everywhere nonvanishing, and the level set $S=f^{-1}(0)\subset\G$ satisfies $\HH^{q-\alpha}(S)>0.$
\end{prop}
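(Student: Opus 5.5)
By Proposition~\ref{prop: not stratified iff Rj=0}, we fix a graded basis $(w_1,\ldots,w_q)$ of $\G$ and an index $h>m$ such that $R_h\equiv0$ in the group law $xy=x+y+\sum_{j>m}R_j(x,y)w_j$. The plan is to define a function of the form $f(x)=x_1+\psi(x_h)$, whose horizontal derivatives avoid $\psi$ entirely, and then choose $\psi$ so that the level set $f^{-1}(0)$ is as large as possible.

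\textbf{Step 1: the horizontal derivatives of $x_h$ vanish.} In this basis, by \eqref{eq: def of Xi wrt base wi} and \eqref{eq: def of bij}, the coefficients $b_{ih}$ are the $t$-derivatives of $R_h(x,tw_i)$, so $b_{ih}\equiv0$ for every $i=1,\ldots,m$. Hence
\[
X_i(x)=\partial_{w_i}+\sum_{j\neq h} b_{ij}(x)\partial_{w_j}\qquad (i=1,\ldots,m),
\]
and no horizontal vector field differentiates in the $w_h$ direction. Consequently, for any continuous $\psi\colon\R\to\R$, the function $x\mapsto\psi(x_h)$ has $X_i\psi(x_h)=0$ pointwise everywhere. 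This holds because $t\mapsto (x\,tw_i)_h=x_h$ is constant, so the derivative along each integral curve exists trivially, even if $\psi$ is nowhere differentiable.

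\textbf{Step 2: the function and its horizontal gradient.} Set $f(x)=x_1+\psi(x_h)$. Then $X_1f=1$ and $X_if=0$ for $2\le i\le m$, since $X_i x_1=\delta_{i1}$ by the formula above. So $\nabla_{\mathrm H}f$ is constant, hence smooth and nowhere vanishing. If $\psi$ is $\alpha$-H\"older, then $f$ is locally $\alpha$-H\"older with respect to the Euclidean distance.

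\textbf{Step 3: the level set.} We have $S=f^{-1}(0)=\{x_1=-\psi(x_h)\}$. In coordinates, $S$ is the product of the planar graph $\Gamma_\psi=\{(-\psi(t),t):t\in\R\}$ in the $(x_1,x_h)$-plane with $\R^{q-2}$ in the remaining coordinates. We therefore need an $\alpha$-H\"older function $\psi$ whose graph satisfies $\HH^{2-\alpha}(\Gamma_\psi)>0$; the Weierstrass-type or Takagi-type functions of exponent $\alpha$ are the natural candidates. Given such a $\psi$, the map $\Phi\colon S\to\Gamma_\psi$ that projects onto the $(x_1,x_h)$-coordinates is $1$-Lipschitz, and its fibers are affine copies of $\R^{q-2}$. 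Applying the Eilenberg inequality (Theorem~\ref{lemma:Eilenberg}) with $s=q-\alpha$ and $t=2-\alpha$ would bound the integral of the fiber measures by $\HH^{q-\alpha}(S)$, which goes in the wrong direction. Instead we use the product estimate $\HH^{(2-\alpha)+(q-2)}(A\times B)\ge c\,\HH^{2-\alpha}(A)\,\HH^{q-2}(B)$, valid when $B$ is Euclidean (Federer 2.10.45). Taking $A=\Gamma_\psi$ and $B=[0,1]^{q-2}$ gives $\HH^{q-\alpha}(S)>0$.

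\textbf{The main obstacle} is the existence of the function $\psi$: we need $\HH^{2-\alpha}(\Gamma_\psi)>0$, not merely $\dim\Gamma_\psi=2-\alpha$. The first attempt will be Besicovitch--Ursell type lacunary Weierstrass sums, for which the positivity of $\HH^{2-\alpha}$ on the graph is classical. An alternative is to show directly that a self-affine Takagi-type function with exponent $\alpha$ has a graph that is a self-affine set with positive measure in its dimension. If neither route gives positivity at the exact exponent, the fallback is to fix the target dimension through a random (Brownian-like) construction of H\"older exponent $\alpha$ and prove positivity via a mass distribution argument.
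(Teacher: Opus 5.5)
\textbf{The gap is the existence of $\psi$.} Your Steps 1--2 are correct, and your $S$ is, up to the sign of $\psi$, the same set as the paper's, namely $\Gamma_\psi\times\R^{q-2}$. So the reduction to finding an $\alpha$-H\"older $\psi$ with $\HH^{2-\alpha}(\Gamma_\psi)>0$ is fine. But that existence statement is the entire content of the proposition. You leave it open, and the routes you list do not deliver it.
\begin{itemize}
\item \emph{Besicovitch--Ursell sums.} Take $\psi=\sum_n\lambda_n^{-\alpha}\phi(\lambda_n x)$ with $\phi$ bounded Lipschitz and $\lambda_{n+1}/\lambda_n\to\infty$. Here the critical measure is actually zero. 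On columns of width $r_n=\lambda_{n+1}^{-\alpha}\lambda_n^{\alpha-1}$, the oscillation of $\psi$ is $\lesssim\lambda_n^{1-\alpha}r_n+\lambda_{n+1}^{-\alpha}\approx\lambda_{n+1}^{-\alpha}$, since both partial sums are dominated by their extreme terms. Hence the graph over $[0,1]$ is covered by $N_n\lesssim\lambda_n^{1-\alpha}/r_n$ squares of side $r_n$, and $N_n r_n^{2-\alpha}\lesssim(\lambda_n/\lambda_{n+1})^{\alpha(1-\alpha)}\to0$. Their mass-distribution argument gives only $\mu(B_r)\lesssim r^{2-\alpha-\eps}$, that is, the dimension and not the measure.
\item \emph{Weierstrass sums.} With geometric frequencies, even the Hausdorff dimension of the graph was a long-standing open problem. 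Positivity at the critical exponent is therefore not classical.
\item \emph{Brownian-type random paths.} These are not $\alpha$-H\"older at the exact exponent, since L\'evy's modulus carries a $\sqrt{\log(1/r)}$ factor. They also lose the critical measure to a $\log\log$ correction.
\item \emph{Self-affine graphs with uniform fibres.} Examples such as the attractors $G_m$ of Section~\ref{section: app} do have positive critical measure. However, they only reach special exponents like $\log m/\log(3m)$. Reaching an arbitrary $\alpha$ requires mixing such maps on a carefully tuned schedule, and that is exactly Rajala's construction.
\end{itemize}

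\textbf{The missing input.} What you need is \cite[Proposition~2.4]{Rajala2026pr}: an $\alpha$-H\"older $\psi\colon\R\to\R$ all of whose level sets have positive $\HH^{1-\alpha}$-measure. The paper takes $f=\psi(x_h)-x_1$ and projects $S$ onto $\R^{q-1}$ by forgetting $x_h$. The fibres are copies of $\psi^{-1}(z_1)$, and the Eilenberg inequality gives $c\,\HH^{q-\alpha}(S)\ge\int_{\R^{q-1}}\HH^{1-\alpha}(\psi^{-1}(z_1))\,dz>0$. No product theorem is needed. The same argument in the plane also gives your $\HH^{2-\alpha}(\Gamma_\psi)>0$.

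\textbf{A smaller error.} Your claim that Eilenberg ``goes in the wrong direction'' is mistaken. Take $E=S\cap\{x'\in[0,1]^{q-2}\}$ and the $1$-Lipschitz projection onto $\Gamma_\psi$, whose fibres have constant positive $\HH^{q-2}$-measure. The inequality then reads $\HH^{2-\alpha}(\Gamma_\psi)\le C\,\HH^{q-\alpha}(E)$, which is exactly the lower bound you want. This is essentially how the product inequality you quote is proved.
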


\begin{proof}
By Proposition~\ref{prop: not stratified iff Rj=0}, we can fix a graded basis on $\G$ so that $R_h\equiv 0$ for some $h>m$.
By virtue of \cite[Proposition~2.4]{Rajala2026pr}, there exists $\psi\colon\R\to\R$, which is $\alpha$-H\"older continuous and all its level sets have positive
$(1-\alpha)$-dimensional Hausdorff measure.
We define the function
    \begin{equation*}
        \tilde{f}\colon\G\to\R,\quad x\mapsto\psi(x_h),
    \end{equation*}
where $x_h$ is the $h$-th component of $x$ with respect to the fixed graded basis.
Then, recalling \eqref{eq: def of Xi wrt base wi} and \eqref{eq: def of bij}, the condition $R_h\equiv0$ implies that $b_{ih}\equiv 0$ and therefore $X_i\tilde f(x)=0$ for every $x\in\G$ and for every $i=1,\ldots,m$.

Let $f(x)= \tilde{f}(x)-x_1$, therefore $X_1f\equiv-1$. 
Then we set $S=f^{-1}(0)$ and consider the projection 
$\pi\colon \R^q\to\R^{q-1}$ that removes the component $x_h$. Then, by definition of $f$, for every $z\in\R^{q-1}$ we have
\begin{equation*}
    S \cap \pi^{-1}(z)=\{x\in\G\colon x_h\in\psi^{-1}(z_1) \text{ and } x_i=z_i\text{ for every } i\neq h\}.
\end{equation*}
Thus, by the Eilenberg inequality, there exists $c>0$ such that
\begin{align*}
    c\; \mathcal{H}^{q-\alpha}(S)\geq& \int_{\R^{q-1}} \mathcal{H}^{1-\alpha}(S\cap\pi^{-1}(z))d\HH^{q-1}(z)\\
    &=\int_{\R^{q-1}} \mathcal{H}^{1-\alpha}(\psi^{-1}(z_1))d\HH^{q-1}(z)>0,
\end{align*}
thereby concluding the proof.
\end{proof}

The proposition shows that for non-stratified groups,
the Hausdorff dimension can be arbitrarily close to $q$
when $\alpha>0$ is small.
Proposition~\ref{prop: graded groups construction} also holds in the case $\alpha=0$, where $f$ is continuous and $\dim_ES=q$. The proof is achieved by the same construction, 
where $\psi\colon\R\to\R$ is a continuous function having almost every level set of Euclidean Hausdorff dimension $1$,
see for instance \cite[Theorem 6.1]{BalDarEle2016}.  

\section{Upper bound for the Hausdorff dimension}

In this section $\G$ denotes a step-2 Carnot group of topological dimension $q$ and $S\subset\G$ is a $C^{1,\alpha}_{\mathrm{H}}$
hypersurface, if not otherwise stated. 
We aim to show that the Euclidean Hausdorff dimension of $S$ 
cannot be larger than $q-\frac{1+\alpha}{2}.$ As a result, 
the hypersurface exhibited in Theorem~\ref{main theor codim k}
achieves the maximal Euclidean Hausdorff dimension. 

\begin{remark}\label{rem: btw estimate}
We observe that if $\G$ is a step $s$ Carnot group and $S$ is $C^{1,\alpha}_{\mathrm{H}}$, general comparison estimates between the Euclidean and the homogeneous Hausdorff dimensions \cite[Theorem 2.4]{BTW09} yield the following upper bound
\begin{equation}\label{eq: upper bound BTW}
    \dim_\mathrm ES\leq q-\frac{1}{s}.
\end{equation}
In particular, when $s=2$ this covers Theorem~\ref{main theorem upper estimate} for $\alpha=0$. 
\end{remark}
Throughout the section, for $x\in\G$ we follow the notation 
$$
x=(x^1,x^2)\in\R^m\times\R^{q-m},
$$
where $x^1$ and $x^2$ denote the horizontal and vertical components of $x$, respectively.

In the following proposition we use the notion of locally $C^{1,\alpha}_{\mathrm H}$ function.
We say that a function $f:\Omega\to\R$ is $C^{1,\alpha}_{\mathrm{H},\text{loc}}$,
or that it belongs to $C^{1,\alpha}_{\mathrm{H},\text{loc}}(\Omega)$,
where $\Omega\subset\G$ is open, if $f\in C^1_{\mathrm H}(\Omega)$ and for every compact $K\subset\Omega$ there exists a constant $C_K>0$ such that 
\[
|\nabla_Hf(x)-\nabla_Hf(y)|\leq C_Kd(x,y)^\alpha\quad\text{for every $x,y\in K$.}
\]
\begin{prop}\label{prop: from C^1,alpha intrinsic to Euclidean Holder}
Let $\G$ be a step-$2$ Carnot group, let $\Omega\subset\G$ be open, and fix $\alpha\in[0,1]$. Then every $f\in C^{1,\alpha}_{\mathrm{H},\text{loc}}(\Omega)$ is also locally $\frac{1+\alpha}{2}$-H\"older continuous in the Euclidean sense.
\end{prop}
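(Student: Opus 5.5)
We connect $x$ to $y$ by a concatenation of horizontal segments and one vertical displacement, and we control each piece with the intrinsic regularity of $f$. Fix a compact $K\subset\Omega$ and a slightly larger compact neighbourhood $K'\subset\Omega$. Take $x,y\in K$ with $|x-y|$ small. It suffices to show $|f(x)-f(y)|\le C|x-y|^{\frac{1+\alpha}{2}}$. The horizontal part is harmless: moving from $x$ to $x\exp(t v)$ with $v\in V_1$ changes $f$ by at most $\|\nabla_\mathrm H f\|_{\infty,K'}|t|$. Since $f$ is $C^1_\mathrm H$, the map $t\mapsto f(x\exp(tX_i))$ is $C^1$, so this bound follows from the mean value theorem. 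The difficulty is the vertical direction, where we must show that $f$ is $\frac{1+\alpha}{2}$-Hölder along each vertical coordinate $x_j$, $j>m$.

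\textbf{Step 1 (reduction).} Write $y=x\cdot z$ with $z=x^{-1}y$. In step 2 we have $z^1=y^1-x^1$ and $z^2=y^2-x^2-\frac12[x^1,y^1]$. Hence $|z^1|\le|x-y|$ and $|z^2|\le C|x-y|$ on compacts, since $[x^1,y^1]=[x^1,y^1-x^1]$. Split $z=(z^1,0)\cdot(0,w)$ with $|w|\le C|x-y|$; this is possible because vertical elements are central. The first factor is a horizontal segment, contributing $O(|x-y|)$. It therefore remains to bound $|f(p\exp(W))-f(p)|$ for $p\in K'$ and a vertical $W\in V_2$ with $|W|=\rho$, and the target is $C\rho^{\frac{1+\alpha}{2}}$.

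\textbf{Step 2 (commutator path; main step).} Write $W=\sum_l [U_l,V_l]$ as a finite sum of brackets of horizontal vectors. Using the stratification and a fixed basis, each $W$ is a combination of $[X_i,X_k]$ with coefficients of size $O(\rho)$. By centrality it suffices to treat a single bracket $W=[\sqrt{\rho}\,U,\sqrt{\rho}\,V]$ with $|U|,|V|\le C$. The BCH formula in step 2 gives the commutator path
\[
p\exp(\varepsilon U)\exp(\varepsilon V)\exp(-\varepsilon U)\exp(-\varepsilon V)=p\exp(\varepsilon^2[U,V]),\qquad \varepsilon=\sqrt{\rho}.
\]
Along each horizontal leg we use the first-order Taylor expansion
\[
f(q\exp(\varepsilon U))=f(q)+\varepsilon\langle\nabla_\mathrm H f(q),U\rangle+R,
\]
where $|R|\le C\varepsilon^{1+\alpha}$. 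This bound holds because $\nabla_\mathrm H f$ is $\alpha$-Hölder along the segment, with intrinsic distance $\le C\varepsilon$. Summing over the four legs, the linear terms are
\[
\varepsilon\langle\nabla_\mathrm H f(q_0),U\rangle+\varepsilon\langle\nabla_\mathrm H f(q_1),V\rangle-\varepsilon\langle\nabla_\mathrm H f(q_2),U\rangle-\varepsilon\langle\nabla_\mathrm H f(q_3),V\rangle,
\]
with $q_i$ the successive vertices. All the $q_i$ lie within intrinsic distance $C\varepsilon$ of $p$. The terms therefore cancel in pairs up to $C\varepsilon\cdot\varepsilon^\alpha$, by the Hölder continuity of $\nabla_\mathrm H f$. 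The total error is $C\varepsilon^{1+\alpha}=C\rho^{\frac{1+\alpha}{2}}$, as desired.

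\textbf{Step 3 (conclusion).} For $\alpha=0$ we only use boundedness of $\nabla_\mathrm H f$, which gives $\frac12$-Hölder continuity directly. Combining the horizontal and vertical estimates gives $|f(x)-f(y)|\le C(|x-y|+|x-y|^{\frac{1+\alpha}{2}})$, which is the claim for $|x-y|\le1$. For larger distances within $K$, boundedness of $f$ gives the claim. The main obstacle is the bookkeeping in Step 2. One must check that all points of the path stay in $K'$, which holds because intrinsic distances are $O(\sqrt{\rho})$ and $K'$ is a fixed neighbourhood. One must also check that the decomposition of $W$ into brackets has uniformly bounded coefficients, which holds because $V_2=[V_1,V_1]$ and we use a fixed basis.
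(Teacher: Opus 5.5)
Your argument is correct, but it takes a different route from the paper. The paper quotes the stratified Taylor inequality \cite[Theorem 1.42]{FS82}, $|f(x)-f(x_0)-\langle\nabla_{\mathrm H}f(x_0),\pi_{\mathrm H}(x_0^{-1}x)\rangle|\le C\,d(x,x_0)^{1+\alpha}$, which holds in any Carnot group. It uses the step-$2$ hypothesis only through the comparison $d(x,x_0)\le C|x-x_0|^{1/2}$ from \eqref{comparison between d_E and d_CC}, and then a covering argument makes the estimate uniform on compacts.

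You instead prove by hand the part of that inequality that is actually needed. The horizontal displacement costs $O(|x-y|)$. A central displacement of Euclidean size $\rho$ is realized by commutator loops of side $\sqrt{\rho}$, along which the linear terms cancel in pairs up to $C\varepsilon^{1+\alpha}$. This is self-contained, shows exactly where the exponent $\frac{1+\alpha}{2}$ comes from, and gives constants uniform on compacts without a covering step. The price is that it is tied to step $2$: in higher step you would need iterated commutators, which is what the general Folland--Stein argument handles.

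One detail needs tidying. In Step 1 you justified that $t\mapsto f(x\exp(tv))$ is $C^1$ only for coordinate directions $v=X_i$, whereas $(z^1,0)$ points in a general horizontal direction. There are two easy fixes:
\begin{enumerate}
\item invoke Pansu differentiability and the chain rule, as in Remark~\ref{lagrange lemma};
\item replace $(z^1,0)$ by $\exp(z_1X_1)\cdots\exp(z_mX_m)$, which differs from it by a central element of size $O(|z^1|^2)$ that you can absorb into $w$.
\end{enumerate}
Similarly, in Step 2 take $U$ and $V$ to be bounded multiples of $X_i$ and $X_k$, coming from the decomposition $W=\sum c_{ik}[X_i,X_k]$. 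Then every leg of each commutator loop is a coordinate line, and the fundamental theorem of calculus applies along it as you use it.
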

\begin{proof}
We fix $x_0\in\Omega$, hence by a Taylor expansion centered at $x_0$, we obtain
    \begin{equation*}
        f(x)=f(x_0)+\langle \nabla_{\mathrm{H}}f_{x_0},\pi_{\mathrm{H}}(x_0^{-1}x)\rangle+R(x_0,x),
    \end{equation*}
where $\pi_{\mathrm{H}}$ denotes the projection onto the horizontal components, and the remainder satisfies
    \begin{equation*}
        |R(x,x_0)|\leq C d(x,x_0)^{1+\alpha},
    \end{equation*}
for some $C>0$ and for every $x$ sufficiently close to $x_0$,
see \cite[Theorem 1.42]{FS82}. 
Therefore, using \eqref{comparison between d_E and d_CC}, there exist constants $C_1,L>0$ such that
    \begin{equation*}
    \begin{split}
        |f(x)-f(x_0)|&\leq |\nabla_{\mathrm{H}} f(x_0)| |x^1-x_0^1|+C d(x,x_0)^{1+\alpha}\\
        &\leq C_1 |x-x_0|+C_1|x-x_0|^{\frac{1+\alpha}{2}}\leq L|x-x_0|^{\frac{1+\alpha}{2}},
    \end{split}
    \end{equation*}
    where the last inequality holds in a sufficiently small neighborhood of $x_0$.
    Finally, by a standard covering argument, we show that $f$ is
    $\frac{1+\alpha}{2}$-H\"older continuous
    in the Euclidean sense on any
    compact subset of $\Omega$.
\end{proof}

\begin{remark}[Mean value theorem]\label{lagrange lemma}
Let $\gamma:[a,b]\to\G$ be a $C^1$ horizontal curve
and let $f\in C^1_{\mathrm{H}}(\Omega)$, where $\Omega\subset\G$ 
is an open set containing $\gamma([a,b])$. 
To obtain a mean value theorem for $f\circ\gamma$, 
the standard chain rule is not enough, since $f$ need not be
$C^1$ smooth in the Euclidean sense, as shown in \cite[Section~4]{Mag6}.
For this reason, the chain rule with respect to
the Pansu differentiability is necessary;
see for instance \cite[Proposition 3.2.5]{MagPhD}. 
Then the differentiability of 
$t\to f(\gamma(t))$ follows from \cite[Theorem~1.1]{Mag14},
and finally the standard Lagrange theorem yields
the existence of $\xi\in(a,b)$ such that
\begin{equation}\label{eq: lagrange mean value}
    f(\gamma(b))-f(\gamma(a))=(b-a)\langle\nabla_{\mathrm{H}}f(\gamma(\xi)),\dot{\gamma}(\xi)\rangle.
\end{equation}
\end{remark}
In view of the proof of the next theorem, we recall that 
in a step-$2$ Carnot group the polynomials $\QQ_i$ arising from the BCH formula, are bilinear and antisymmetric and are written as
\begin{equation}\label{eq: qi in step 2}
    \QQ_i(x^1,y^1)=\frac{1}{2}[x^1,y^1]_i=\sum_{j,k=1}^m b^i_{jk}x_jy_k, \quad\quad b^i_{jk}=-b^i_{kj},
\end{equation}
for every $m+1\leq i\leq q$.

In the sequel, the subset $B(x,r)\subset\G$ will denote the open metric ball of center $x$ and radius $r>0$
with respect to the homogeneous distance $d$ of \eqref{def of d}.

\begin{theorem}[$\frac{1+\alpha}{2}$-H\"older continuity]\label{intrinsic parametrization is Holder}
Let $\Omega\subset\G$ be open, let $f\in C^{1,\alpha}_{\mathrm{H},{\rm loc}}(\Omega)$ and assume that 
$\nabla_Hf(x_0)\neq0$ for some $x_0\in f^{-1}(f(x_0))=S$.
We fix a graded basis $(e_1,\ldots,e_q)$ of $\G$ 
with associated basis $(X_1,\ldots,X_q)$ of left-invariant vector fields in $\Lie(\G)$.
We choose $j\in\{1,\ldots,m\}$ for which 
$X_jf(x_0)\neq0$.
Let us define the homogeneous subgroups 
\[
\V=\spann\{e_j\}\quad \text{and} \quad \W=\spann\{e_1,\ldots,e_{j-1},e_{j+1},\ldots,e_q\}.
\]
Then there exist $r_0>0$, a bounded open neighborhood $U\subset\W$ of $\pi_{\W}(x_0)$, and  
$\phi\colon U\to\V$ such that
$S\cap B(x_0,r_0)=\graph{\phi}$ and 
$\phi$ is $\frac{1+\alpha}2$-H\"older continuous
with respect to the Euclidean distance.
\end{theorem}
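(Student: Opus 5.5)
The plan is to combine the implicit function theorem (Theorem~\ref{implicit function theorem}) with the Euclidean H\"older regularity of $f$ from Proposition~\ref{prop: from C^1,alpha intrinsic to Euclidean Holder}. Monotonicity of $f$ along the horizontal lines $s\mapsto w\,(se_j)$, which are integral curves of $X_j$, then converts oscillations of $f$ into oscillations of $\phi$.

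\emph{Step 1 (localization).} Assume $f(x_0)=0$ and, without loss of generality, $X_jf(x_0)>0$. By continuity of $\nabla_{\mathrm H}f$, choose a compact ball $\overline{B(x_0,2\rho)}\subset\Omega$ on which $X_jf\ge c>0$. On this ball Proposition~\ref{prop: from C^1,alpha intrinsic to Euclidean Holder} gives $|f(x)-f(y)|\le L|x-y|^{\frac{1+\alpha}2}$. Theorem~\ref{implicit function theorem} yields $\Omega'$, $U$ and a continuous $\phi$ with $S\cap\Omega'=\graph\phi$. By shrinking, take $r_0\le\rho$ with $B(x_0,r_0)\subset\Omega'$, and take $U$ a bounded open neighborhood of $\pi_\W(x_0)$. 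Using continuity of $\phi$ and of the group operation, we may further require the following: writing $\phi(w)=t(w)e_j$, for all $w,w'\in U$ and all $s$ between $t(w)$ and $t(w')$, the points $w'(se_j)$ and $w(se_j)$ lie in $B(x_0,2\rho)$. Arranging this simultaneously with $S\cap B(x_0,r_0)=\graph\phi$ is the main technical point. I would handle it by first fixing $\rho$, then choosing $U$ so small that $t(U)$ lies in a short interval around $t(\pi_\W(x_0))$ and $U\cdot(t(U)e_j)\subset B(x_0,2\rho)$. Monotonicity of $f$ along these lines gives uniqueness of the zero, so the graph coincides with $S$ near $x_0$.

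\emph{Step 2 (mean value along $X_j$).} Fix $w,w'\in U$ and set $t=t(w)$, $t'=t(w')$, so that $f(w(te_j))=f(w'(t'e_j))=0$. The curve $\gamma(s)=w'(se_j)$ is horizontal with $\dot\gamma=X_j(\gamma)$. Remark~\ref{lagrange lemma} therefore gives
\[
f(w'(t'e_j))-f(w'(te_j))=(t'-t)\,X_jf(\gamma(\xi)),
\]
hence
\[
c\,|t'-t|\le|f(w'(te_j))|=|f(w'(te_j))-f(w(te_j))|\le L\,|w'(te_j)-w(te_j)|^{\frac{1+\alpha}2}.
\]

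\emph{Step 3 (Euclidean comparison in step $2$).} By \eqref{eq: qi in step 2}, $w(te_j)=w+te_j+\sum_i\QQ_i(w^1,te_j)e_i$, and this expression is affine in $w$. Thus
\[
w'(te_j)-w(te_j)=(w'-w)+\sum_i\QQ_i\bigl((w'-w)^1,te_j\bigr)e_i,
\]
whose norm is at most $C|w'-w|$, with $C$ depending on $\sup_U|t|<\infty$. Consequently
\[
|\phi(w')-\phi(w)|=|t'-t|\le c^{-1}LC^{\frac{1+\alpha}2}|w'-w|^{\frac{1+\alpha}2},
\]
which is the desired $\frac{1+\alpha}2$-H\"older continuity.

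The only delicate point is Step 1, namely keeping all the curves used in Steps 2 and 3 inside the region where $X_jf\ge c$ and the H\"older bound holds. The estimates themselves are direct.
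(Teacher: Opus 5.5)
Your argument is correct, and it differs from the paper's in the choice of the intermediate point.

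\textbf{The paper's route.} The paper takes $x=w\phi(w)$ and $y=\tilde w\phi(\tilde w)$, sets $v=x^{-1}y$, and applies the Lagrange theorem along the horizontal segment $t\mapsto x(tv^1)$ from $x$ to $xv^1$. It then isolates the $X_1f$ term. This forces it to control two further quantities: the extra Lipschitz contribution $\sum_{i\ge 2}X_if\,v_i$ coming from the other horizontal derivatives, and the Euclidean discrepancy $|y-xv^1|=|v^2|$. The latter needs the explicit step-$2$ computation of the components of $x^{-1}y$.

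\textbf{Your route.} You pass through $w'\phi(w)$ and move only along the flow of $X_j$. This buys three simplifications. First, the mean value step needs nothing beyond the definition of $X_jf$ as the derivative of $s\mapsto f(z(se_j))$, so the Pansu chain rule behind Remark~\ref{lagrange lemma} is not really needed. Second, only a lower bound on $X_jf$ enters; no other horizontal derivatives appear. Third, the step-$2$ input reduces to the fact that $w\mapsto w(te_j)$ is affine.

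\textbf{Comparison.} The paper's decomposition is the natural one from the viewpoint of the intrinsic increment $x^{-1}y$ and the horizontal Taylor expansion. Both proofs ultimately rest on the same two facts: the Euclidean $\frac{1+\alpha}{2}$-H\"older continuity of $f$ from Proposition~\ref{prop: from C^1,alpha intrinsic to Euclidean Holder}, and a bound, Lipschitz in $|w'-w|$, on the relevant Euclidean discrepancy.

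\textbf{Your Step 1.} The difficulty you flag disappears if you argue as the paper does. Set $\Phi(w)=w\phi(w)$ and take $U=\Phi^{-1}(B(x_0,r_0))$ with $B(x_0,r_0)\subset\Omega'$. Then $S\cap B(x_0,r_0)=\graph\phi$ holds by definition.

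Now let $x=w(te_j)$ and $y=w'(t'e_j)$ lie in $B(x_0,r_0)$. Horizontal coordinates add, so $t=x_j$ and $t'=y_j$. Hence $|t-t'|\le \bigl(d(x_0,x)+d(x_0,y)\bigr)/c_j<2r_0/c_j$. For $s$ between $t$ and $t'$ this gives
\[
d\bigl(x_0,w'(se_j)\bigr)\le d(x_0,y)+c_j|s-t'|<3r_0 .
\]
Thus $r_0\le 2\rho/3$ already keeps all your curves, including the point $w'(te_j)$, inside $B(x_0,2\rho)$. No separate shrinking of $U$ and no monotonicity or uniqueness argument is needed.
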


\begin{proof}
Without loss of generality, we may assume that $j=1$
and fix a system of graded coordinates associated with
$e_1,\ldots,e_q\in\G$. 
From Theorem~\ref{implicit function theorem},
we can find $r>0$, an open neighborhood $\widetilde U\subset\W$ of $\pi_\W(x_0)$, and a continuous 
function $\phi:\widetilde U\to\V$ such that 
$S\cap B(x_0,r)=\graph{\phi}\big|_{\widetilde U}$,
with $B(x_0,r)\subset \Omega$.
We can also assume that $X_1f$ is everywhere nonvanishing
on $B(x_0,r)$.
Let $x,y\in S\cap B(x_0,\frac{r}{3})$, therefore in our coordinates 
\[
x=w (\phi(w),0,\ldots,0)\quad\text{and}\quad 
y=\tilde{w} (\phi(\tilde{w}),0,\ldots,0)
\]
for some $w,\tilde w\in U'$. We define
    \begin{equation*}
            v=x^{-1}y
    \end{equation*}
and observe that $v=(v_1,v_2,\ldots,v_q)$, where
\begin{equation}\label{eq v_i}
    v_i= 
    \begin{cases}
        \phi(\tilde{w})-\phi(w) & i=1,\\
        \tilde{w}_i-w_i & 2\leq i\leq m,\\
        (w^{-1}\tilde{w})_i+(\phi(\tilde{w})+\phi(w))\sum_{j=2}^m b^i_{j1}(\tilde{w}_j-w_j)& m+1\leq i\leq q.
    \end{cases}
\end{equation}
Indeed, for $m+1\leq i\leq q$, using \eqref{eq: qi in step 2} we compute
\begin{equation*}
    \begin{split}
        v_i&=y_i-x_i-\sum_{j,k=1}^m b^i_{jk} x_jy_k=\tilde{w}_i+\sum_{j=2}^m b^i_{j1}\phi(\tilde{w})\tilde{w}_j-w_i- \sum_{j=2}^m b^i_{j1}\phi(w)w_j\\
        &-\sum_{k=2}^m b^i_{1k}\phi(w)\tilde{w}_k-\sum_{j=2}^m b^i_{j1}\phi(\tilde{w})w_j-\sum_{j,k=2}^m b^i_{jk}w_j\tilde{w}_k\\
        &=\left( w^{-1}\tilde{w} \right)_i+(\phi(\tilde{w})+\phi(w))\sum_{j=2}^m b^i_{j1}(\tilde{w}_j-w_j).
    \end{split}
\end{equation*}
Following our notation, $v^1=(x^{-1}y)^1=(v_1,\ldots,v_{m})$, then we consider
    \begin{equation*}
        \gamma_{x,v}(t)=x(tv_1,\ldots,tv_{m},0,\ldots,0)\quad\quad t\in[0,1].
    \end{equation*}
We notice that
    \begin{equation*}
     d\big(x_0,\gamma_{x,v}(t)\big)\leq
     d(x_0,x)+\|\big(t(x^{-1}y)^1,0\big)\|_\G<\frac{r}{3} +\|\big((x^{-1}y)^1,0\big)\|_\G<r,
    \end{equation*}
    hence $\gamma_{x,v}(t)\in B(x_0,r)\subset\Omega$ for every $t\in[0,1]$.
    By Remark~\ref{lagrange lemma}, we get $t_0\in[0,1]$ with
    \begin{equation}\label{eq f(p')-f(p)}
    \begin{split}
        0&=f(y)-f(x)=f(y)-f(\gamma_{x,v}(1))+f(\gamma_{x,v}(1))-f(\gamma_{x,v}(0))\\
        &=f(y)-f(x v^1)+\sum_{i=1}^m X_i f(\gamma_{x,v}(t_0))v_i.
    \end{split}
    \end{equation}
Then, we isolate the term containing $v_1$ in \eqref{eq f(p')-f(p)}, recalling that $v_1=\phi(w)-\phi(\tilde w)$. By \eqref{eq v_i} and Proposition~\ref{prop: from C^1,alpha intrinsic to Euclidean Holder}, there exist $M_1,M_2>0$ such that
\begin{equation}\label{eq phi(w')-phi(w)}
    \begin{split}
        |\phi(\tilde{w})-\phi(w)|
        &\leq\frac{1}{|X_1 f(\gamma_{x,v}(t_0))|}\left(|f(y)-f(x v^1)|+\sum _{i=2}^m |X_i f(\gamma_{x,v}(t_0))||\tilde{w}_i-w_i| \right)\\
        &\leq M_1|y-x v^1|^{\frac{1+\alpha}{2}}+M_2 |\tilde{w}-w|.
    \end{split}
\end{equation}
Moreover, recalling the definition of $v$ and using \eqref{eq v_i}, \eqref{comparison between d_E and d_CC}, and the boundedness of $\phi$, there exist $C,C_1,C_2>0$ such that
\begin{equation*}
\begin{split}
    |y-x v^1|&=|x v-x v^1|=|x v^1 v^2-x v^1|=|v^2|\\
    &\leq\sum_{i=m+1}^q |v_i|\leq C_1 d(w,\tilde{w})^2+C_2\sum_{j=2}^m|\tilde{w}_j-w_j|\\
    &\leq C|w-\tilde{w}|.
\end{split}
\end{equation*} Substituting this estimate back into
\eqref{eq phi(w')-phi(w)}, we conclude that there exists $L>0$ such that
\begin{equation*}
    |\phi(\tilde{w})-\phi(w)|\leq L|w-\tilde{w}|^{\frac{1+\alpha}{2}},
\end{equation*}
where $L$ is independent of $x$ and $y$.
Setting $w\to\Phi(w)=w\phi(w)$, 
$U=\Phi^{-1}\big(B(x_0,r/3)\big)$ and $r_0=r/3$, the proof is complete.
\end{proof}

\begin{cor}\label{cor alpha=1}
    Let $\G$ be a Carnot group of step $2$
    and let $S\subset\G$ be a $C^{1,1}_{\mathrm{H}}$ regular hypersurface. 
    Then $S$ is $(q-1)$-Euclidean rectifiable.
\end{cor}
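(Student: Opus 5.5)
The proof is local: I will show that near each point $S$ is the Euclidean graph of a Lipschitz function. Rectifiability then follows by covering $S$ with countably many such pieces.

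Fix $x_0\in S$ and a defining function $f\in C^{1,1}_{\mathrm H}$ near $x_0$ with, say, $X_1f(x_0)\neq0$. Apply Theorem~\ref{intrinsic parametrization is Holder} with $\alpha=1$. It gives $r_0>0$, a bounded open $U\subset\W$ and $\phi\colon U\to\V$ such that $S\cap B(x_0,r_0)=\graph\phi$. Moreover, $\phi$ is $\frac{1+1}{2}=1$-H\"older, that is, Lipschitz with respect to the Euclidean distance on $U\subset\W\cong\R^{q-1}$.

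Next I pass from the intrinsic graph to a Euclidean parametrization. Consider $\Phi\colon U\to\G$, $\Phi(w)=w\,(\phi(w)e_1)$. In graded coordinates of a step-$2$ group, by \eqref{eq: qi in step 2} and as computed in \eqref{eq v_i}, the components of $\Phi$ are as follows:
\begin{itemize}
\item the first component is $\phi(w)$;
\item components $2,\ldots,m$ are $w_2,\ldots,w_m$;
\item each vertical component $i>m$ is $w_i+\phi(w)\sum_{j\ge2}b^i_{j1}w_j$.
\end{itemize}
On the bounded set $U$, with $\phi$ bounded and Lipschitz, each of these is Lipschitz. Hence $\Phi$ is a Lipschitz map from an open subset of $\R^{q-1}$ onto $S\cap B(x_0,r_0)$. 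Alternatively, one can note that $\Phi$ is injective and that its inverse is the restriction of the smooth projection $\pi_\W$, so that $S\cap B(x_0,r_0)$ is a Lipschitz graph over its projection onto the coordinates $x_2,\ldots,x_q$. Either way, this local piece is $(q-1)$-rectifiable.

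Finally, $S$ is covered by the balls $B(x_0,r_0(x_0))$, $x_0\in S$. Since $\G$ is separable, countably many of them suffice (Lindel\"of), so $S$ is a countable union of Lipschitz images of subsets of $\R^{q-1}$. This is exactly $(q-1)$-Euclidean rectifiability.

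The only delicate point is the first step, and it is already covered by Theorem~\ref{intrinsic parametrization is Holder}. One should just check that its proof is valid at the endpoint $\alpha=1$. It relies on Proposition~\ref{prop: from C^1,alpha intrinsic to Euclidean Holder}, which makes $f$ locally Euclidean Lipschitz, and on the estimate $|y-xv^1|\le C|w-\tilde w|$; both hold verbatim for $\alpha=1$. Everything after that is routine bookkeeping with polynomial (hence locally Lipschitz) group operations.
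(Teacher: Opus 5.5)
Your main argument is correct and is essentially the paper's proof: Theorem~\ref{intrinsic parametrization is Holder} with $\alpha=1$ makes $\phi$ Euclidean Lipschitz, the graph map $\Phi(w)=w\phi(w)$ is then Euclidean Lipschitz on the bounded set $U$, and second countability of $\G$ yields a countable subcover. Drop the ``alternative'' remark, though: $\Phi^{-1}=\pi_\W|_{S\cap B(x_0,r_0)}$ does make $\Phi$ bi-Lipschitz onto its image, but $\pi_\W$ is not the coordinate projection onto $(x_2,\ldots,x_q)$, and $S$ need not be a Euclidean graph $x_1=h(x_2,\ldots,x_q)$ (in $\H^1$, the surface $S=\{x_3=0\}$ near $(0,1,0)$ satisfies $X_1x_3\neq0$ there, yet it projects onto a line in the $(x_2,x_3)$-plane).
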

\begin{proof}
We fix $x\in S$ and apply Theorem \ref{intrinsic parametrization is Holder} to this point. Thus, we can find two complementary subgroups $\W$ and $\V$ as in the statement of the theorem, an open ball $B(x,r_x)\subset\G$, an open neighborhood $U_{x}\subset\W$ of $\pi_\W(x)\in\W$, and a Lipschitz continuous function $\phi \colon U_{x}\to\V$ such that $S\cap B(x,r_x)=\graph\phi$. The graph mapping
\[ 
\Phi\colon U_{x}\to S\cap B(x,r_x),\quad
            w\mapsto w\phi(w),
\]
is also Lipschitz continuous with respect to the Euclidean distance. Thus, the local piece $S\cap B(x,r_x)=\Phi(U_x)$ is $(q-1)$-Euclidean rectifiable.
Since $\G$ is second countable, we may extract a countable subcovering of $S$ from the covering $\{B(x,r_x)\colon x\in S\}$, hence concluding the proof.
\end{proof}

We now restate and prove Theorem~\ref{main theorem upper estimate}.
\begin{theorem}\label{thm:tau_step2}
Let $\G$ be a step-$2$ Carnot group and 
let $S\subset \G$ be a $C^{1,\alpha}_{\mathrm{H}}$ hypersurface, with $\alpha\in[0,1]$. Then we have $\dim_\mathrm E S\leq q-\frac{1+\alpha}{2}$.
\end{theorem}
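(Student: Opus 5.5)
The plan is to localize and foliate, following the geometric picture in the introduction. Since Hausdorff dimension is countably stable and $\G$ is second countable, it suffices to show that for each $x_0\in S$ there is a neighborhood $B(x_0,r_0)$ with $\dim_\mathrm E(S\cap B(x_0,r_0))\le q-\frac{1+\alpha}{2}$. Fix $x_0$, a local defining function $f\in C^{1,\alpha}_\mathrm H(\Omega)$, and (after relabeling) assume $X_1f(x_0)\neq0$, so $X_1f$ is bounded away from zero on a small ball. Consider the foliation map
\[
\Psi(x,t)=x\,(t e_1),\qquad x\in S\cap B(x_0,r_0),\ |t|<t_0,
\]
i.e. right translation along the horizontal direction $e_1$. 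The leaves are $S_t=S\,(te_1)$. Each right translation $x\mapsto x(te_1)$ is a polynomial diffeomorphism of $\R^q$, locally bi-Lipschitz in the Euclidean distance uniformly for $|t|\le t_0$, so $\dim_\mathrm E S_t=\dim_\mathrm E S\cap B(x_0,r_0)$ for all such $t$.

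Next I would show that the leaves $S_t$ are disjoint and fill a neighborhood $N$ of $x_0$, realized as level sets of a Euclidean H\"older function. Define $\tau\colon N\to\R$ by $\tau(z)=t$ iff $z(-te_1)\in S$, i.e. $f(z(-te_1))=0$. Along $t\mapsto z(-te_1)$, which is the integral curve of $-X_1$, the mean value theorem of Remark~\ref{lagrange lemma} gives $\frac{d}{dt}f(z(-te_1))=-X_1f(z(-te_1))$, which has a fixed sign; so for each $z$ near $x_0$ there is exactly one small $t$ with $z(-te_1)\in S$, and $\tau$ is well defined with $S_t\cap N=\tau^{-1}(t)$. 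For the regularity of $\tau$: given $z,z'\in N$ with $t=\tau(z)$, strict monotonicity yields
\[
|\tau(z)-\tau(z')|\le \frac{1}{\inf|X_1f|}\,\big|f(z'(-te_1))-f(z(-te_1))\big|,
\]
and by Proposition~\ref{prop: from C^1,alpha intrinsic to Euclidean Holder} together with the local Euclidean Lipschitz bound of the right translation, this is $\le L|z-z'|^{\frac{1+\alpha}{2}}$. Hence $\tau$ is locally $\frac{1+\alpha}{2}$-H\"older in the Euclidean sense. Alternatively, one can use Theorem~\ref{intrinsic parametrization is Holder} to write $S$ locally as the intrinsic graph $w\phi(w)$; in step 2, $w\phi(w)(te_1)$ has first coordinate $\phi(w)+t$ and vertical coordinates differing by terms linear in $t$, so $\tau$ is $\phi$ composed with a smooth chart, minus the first coordinate.

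Finally, apply the Eilenberg inequality, Theorem~\ref{lemma:Eilenberg}, to $\tau$. Regard $\tau$ as a Lipschitz map from $(N,|\cdot|^{\frac{1+\alpha}{2}})$ to $\R$. Since $\mathcal H^{s}$ for the snowflaked metric equals $\mathcal H^{2s/(1+\alpha)}$ for the Euclidean one, applying the inequality with $t=1$ and $s>\frac{2q}{1+\alpha}$ (so that the left side is $\mathcal H^{s}(N)$ with respect to the snowflaked metric, which is zero, since $N$ has Euclidean dimension $q$) gives
\[
\int^*\mathcal H^{\frac{1+\alpha}{2}(s-1)}_{\mathrm E}\big(\tau^{-1}(t)\big)\,dt=0.
\]
Thus for a.e. $t$, $\dim_\mathrm E S_t\le \frac{1+\alpha}{2}s-\frac{1+\alpha}{2}$. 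Letting $s\downarrow \frac{2q}{1+\alpha}$ gives $\dim_\mathrm E S_t\le q-\frac{1+\alpha}{2}$ for a.e. $t$. Choosing one such $t$ and using the bi-Lipschitz equivalence of $S_t$ with $S_0$ transfers the bound to $S\cap B(x_0,r_0)$.

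The main obstacle is the second step: proving that $\tau$ is well defined and uniformly $\frac{1+\alpha}{2}$-H\"older on a full neighborhood, which requires controlling the domains (so that all curves $z(-te_1)$ stay in the ball where $X_1f\neq0$) and uniform constants. The Eilenberg step is routine once the snowflake rescaling is set up.
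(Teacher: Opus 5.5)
Your argument is correct and shares the paper's architecture. You foliate a neighbourhood of $x_0$ by the right translates $S_t=S\,(te_1)$ and realize them as level sets of a Euclidean $\frac{1+\alpha}{2}$-H\"older function $\tau$. You then bound the dimension of a.e.\ level set by Eilenberg's inequality in the snowflaked metric, which is exactly Remark~\ref{rem: dim holder level set}, and transfer the bound back to $S$ by bi-Lipschitz right translation.

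You differ in how you prove the H\"older estimate for $\tau$. The paper first proves Theorem~\ref{intrinsic parametrization is Holder}, the H\"older continuity of the graphing map $\phi$, using a horizontal segment between two graph points and step-$2$ formulas for $x^{-1}y$. It then inverts $\Psi$ algebraically: since $w\phi(w)(te_1)=w\,((\phi(w)+t)e_1)$, each $S_t$ is the intrinsic graph of $\phi+t$, $\Psi$ is globally injective, and $\tau(x)=x_1-\phi(\pi_\W(x))$. You instead get the estimate in one line from Proposition~\ref{prop: from C^1,alpha intrinsic to Euclidean Holder} and the strict monotonicity of $s\mapsto f(z(-se_1))$.

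Your route makes Theorem~\ref{intrinsic parametrization is Holder} unnecessary; it even follows back, since $\phi\circ\pi_\W=\pi_\V-\tau$. It also uses step $2$ only through Proposition~\ref{prop: from C^1,alpha intrinsic to Euclidean Holder}, so it would carry over to step $s$ with exponent $\frac{1+\alpha}{s}$. The price is domain bookkeeping that the paper's explicit inversion avoids: existence and uniqueness of $\tau$ by the intermediate value theorem, keeping the curves where $X_1f\ge c>0$, and choosing $r_0,t_0$ so that $S_t\subset N$.

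Three minor points:
\begin{enumerate}
\item The identity $\frac{d}{ds}f(z(-se_1))=-X_1f(z(-se_1))$ is just the definition of $X_1f$ along its integral curves, so Remark~\ref{lagrange lemma} is not needed.
\item The snowflake relation is $\HH^s_{|\cdot|^\gamma}\simeq\HH^{\gamma s}$ with $\gamma=\frac{1+\alpha}{2}$, not $\HH^{s/\gamma}$ as you wrote. Your thresholds $s>\frac{2q}{1+\alpha}$ and $\frac{1+\alpha}{2}(s-1)$ already use the correct version.
\item Taking $s=\frac{2q}{1+\alpha}$ directly, where $\HH^s_{|\cdot|^\gamma}(N)\simeq\HH^q(N)<\infty$, gives $\HH^{q-\frac{1+\alpha}{2}}(S_t)<\infty$ for a.e.\ $t$. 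This is the local finiteness the paper records after its proof, which is stronger than the dimension bound alone.
\end{enumerate}
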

\begin{proof}
As already mentioned in the introduction, for $\alpha=0$ our claim follows from the general estimates of \cite[Theorem~2.4]{BTW09}, see also Remark \ref{rem: btw estimate}.
For $\alpha=1$, it suffices to apply Corollary \ref{cor alpha=1}.
We now focus our attention on the case $0<\alpha<1$.

Fix $x_0\in S$. Let $f:\Omega\to\R$ be the $C^{1,\alpha}_{\mathrm H}$ function
such that $x_0\in\Omega\cap S=f^{-1}(0)$ and $\nabla_Hf(z)\neq0$ 
for every $z\in\Omega$. It is not restrictive to assume 
for instance that $X_1f$ is everywhere positive on $\Omega$. By Theorem~\ref{intrinsic parametrization is Holder}, there exists an open ball $B(x_0,r_0)$, 
two complementary subgroups $\W\subset\G$ and $\V\subset\G$, a bounded open set $U\subset\W$, and a Euclidean $\frac{1+\alpha}{2}$-H\"older continuous function $\phi\colon U\to \V$ such that $S\cap B(x_0,r_0)=\graph\phi$.  
There also exists a graded basis $e_1,e_2,\ldots,e_q$ of $\G$, such that 
$\W=\spann\{e_2,\ldots,e_q\}$ and $\V=\spann\{e_1\}$.
We consider the following map
\[
\Psi\colon  (0,1)\times U\to \mathcal O\subset\G, \quad 
(t,w)\mapsto w \phi(w) \exp(tX_1)
\]
and define the family of surfaces $(S_t)_{t\in (0,1)}$ as
\begin{equation*}
    S_t\coloneqq \Psi\left( \{t\}\times U\right).
\end{equation*}
We claim that $\Psi$ is invertible on its whole 
image $\mathcal O$, and that the following map is $\frac{1+\alpha}{2}$-H\"older continuous with respect to the Euclidean metric:
\begin{equation*}
    \tau\colon \mathcal O\to\R,\quad \tau(x)\coloneqq \pi_1(\Psi^{-1}(x)).
\end{equation*}
We will isometrically identify $\W$ and $\V$ with $\R^{q-1}$ and $\R$, respectively. 
We choose any $x\in \mathcal O$ and consider the system 
\begin{equation*}
\begin{split}
        x=\Psi(w,t)&\iff
        \begin{cases}
        x_1=t+\phi(w) & i=1\\
        x_i=w_i & 2\leq i\leq m\\
        x_i=w_i+\sum_{j=2}^m b^i_{j1} (t+\phi(w))w_j \quad&m+1\leq i\leq q
    \end{cases}\\
    &\iff
    \begin{cases}
       t=x_1-\tilde{\phi}(x) & i=1\\
        w_i=x_i & 2\leq i\leq m\\
        w_i=x_i-\sum_{j=2}^m b^i_{j1} x_1x_j \hspace{2cm}& m+1\leq i\leq q.
    \end{cases}
\end{split}
\end{equation*}
where $\tilde{\phi}(x)=\phi(x_2,\ldots,x_m,x_{m+1}-\sum_{j=1}^m b^{m+1}_{j1}x_1x_j,\ldots,x_{q}-\sum_{j=1}^m b^{q}_{j1}x_1x_j)$.
By the definition of $\tau$, we have
\begin{equation*}
    \tau(x)=x_1-\tilde{\phi}(x),
\end{equation*}
and it is $\frac{1+\alpha}{2}$-H\"older continuous, since $\tilde{\phi}$ is the composition of a $\frac{1+\alpha}{2}$-H\"older continuous function and a polynomial on a bounded set.
Taking into account Remark~\ref{rem: dim holder level set},
for a.e.\ $t\in(0,1)$ we get 
\begin{equation}\label{eq:dimest}
    \HH^{q-\frac{1+\alpha}{2}}(S_t)=\HH^{q-\frac{1+\alpha}{2}}( \tau^{-1}(t))<+\infty .
\end{equation}
Since the right multiplication is an analytic 
diffeomorphism of $\G$, all $S_t$ are locally bi-Lipschitz images of $S$. We conclude that \eqref{eq:dimest} also holds for $S$ in place of $S_t$, hence $\dim_E S_t\leq q-\frac{1+\alpha}{2}$.
\end{proof}

\begin{remark}
From \eqref{eq:dimest}, under the hypothesis of Theorem~\ref{thm:tau_step2}, we actually deduce the stronger statement that restriction
$\HH^{q-\frac{1+\alpha}{2}}\text{\small $\lfloor$}\,S$
is locally finite.
\end{remark}

\begin{cor}\label{cor:Holdermanifold}
Let $0\le \alpha\le 1$. Then any $C^{1,\alpha}_{\mathrm H}$ hypersurface embedded in a step-2 Carnot group is also $\frac{1+\alpha}{2}$-H\"older manifold. 
\end{cor}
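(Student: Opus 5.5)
The plan is to read the corollary off Theorem~\ref{intrinsic parametrization is Holder}. The required local charts will be the intrinsic graph maps, and both the parametrization and its inverse should be Euclidean $\frac{1+\alpha}{2}$-H\"older continuous.

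Fix $x_0\in S$ and a defining function $f\in C^{1,\alpha}_{\mathrm H}(\Omega)$ with $\nabla_{\mathrm H}f\neq0$ on $\Omega$. Choose $j$ with $X_jf(x_0)\neq0$; after relabeling, $j=1$. Theorem~\ref{intrinsic parametrization is Holder} gives $r_0>0$, a bounded open set $U\subset\W\cong\R^{q-1}$ and a Euclidean $\frac{1+\alpha}{2}$-H\"older function $\phi\colon U\to\V$ such that $S\cap B(x_0,r_0)=\graph\phi$. The chart will be
\[
\Phi\colon U\to S\cap B(x_0,r_0),\qquad \Phi(w)=w\phi(w).
\]
In step 2, the explicit formula used in the proof of Theorem~\ref{thm:tau_step2} shows that the components of $\Phi$ are $w_i$ for $2\le i\le m$, $\phi(w)$ in position $1$, and
\[
w_i+\sum_{j=2}^m b^i_{j1}\phi(w)w_j \quad \text{for } m+1\le i\le q.
\]
These are sums and products of bounded functions on a bounded set, each either Lipschitz or $\frac{1+\alpha}{2}$-H\"older, so $\Phi$ is $\frac{1+\alpha}{2}$-H\"older.

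The inverse is $\Phi^{-1}=\pi_\W|_S$. In step 2 with $\V=\spann\{e_1\}$, the computation in the proof of Theorem~\ref{thm:tau_step2} gives
\[
\pi_\W(x)=\Bigl(x_2,\ldots,x_m,\;x_i-\sum_{j} b^i_{j1}x_1x_j\Bigr)_{i>m}.
\]
This is a polynomial map, hence Lipschitz on bounded sets. So $\Phi$ is a homeomorphism onto the relatively open set $S\cap B(x_0,r_0)$, its inverse is Lipschitz, and in particular it is $\frac{1+\alpha}{2}$-H\"older. This gives a H\"older atlas of $S$ modeled on open subsets of $\R^{q-1}$.

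It remains to check the transition maps $\Phi_b^{-1}\circ\Phi_a$ between overlapping charts. Each is the composition of a $\frac{1+\alpha}{2}$-H\"older map with a locally Lipschitz one, so it is locally $\frac{1+\alpha}{2}$-H\"older, which is what a H\"older manifold structure requires.

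The only mild obstacle arises when two charts use different directions $e_j\neq e_{j'}$, because then the projections $\pi_\W$ differ. In step 2 every such projection is still a polynomial in the coordinates, so the argument goes through unchanged. The case $\alpha=0$ is included, since Theorem~\ref{intrinsic parametrization is Holder} covers $\alpha\in[0,1]$.
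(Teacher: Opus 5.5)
Your proposal is correct and is essentially the paper's argument. The paper's chart is $\Psi|_{\{0\}\times U}$, which is exactly your intrinsic graph map $\Phi(w)=w\phi(w)$, and its H\"older regularity comes, as in your proposal, from Theorem~\ref{intrinsic parametrization is Holder} and the explicit step-2 formulas. The paper gets bi-H\"older continuity by restricting the bi-H\"older foliation map $\Psi$ on $(-1,1)\times U$. You instead observe directly that $\Phi^{-1}=\pi_\W|_S$ is polynomial, hence locally Lipschitz; this is slightly sharper and lets you handle the transition maps explicitly.
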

\begin{proof}
We follow the notation and definitions as in the proof of 
Theorem~\ref{thm:tau_step2}. Let \[
\Psi\colon  (-1,1)\times U\to \mathcal O\subset\G, \quad 
(t,w)\mapsto w \phi(w) \exp(tX_1).
\]
Using the same argument of Theorem \ref{thm:tau_step2}, it is not difficult
to realize that $\Psi$ is locally $\frac{1+\alpha}{2}$-bi-H\"older continuous, hence so is its restriction
\[
\Psi|_{\{0\}\times U}:\{0\}\times U\to S\cap B(x_0,r_0).
\]
This immediately leads us to the conclusion.
\end{proof}

Theorem~\ref{thm:tau_step2} in the case $\alpha=0$ 
has an interesting stronger version. 
In fact, the graphing function $\phi$ defining $S\cap\Omega$ is locally $\frac12$-little H\"older continuous, namely
for every $\Omega'\Subset\Omega$ we have
\begin{equation}\label{eq:1/2littleHolder}
        \lim_{r\to 0}\;\sup_{\substack{x,y\in\Omega'\\0<|x-y|\leq r}}\frac{|\phi(x)-\phi(y)|}{|x-y|^\frac{1}{2}}=0.
\end{equation}
In step-2 Carnot groups, different proofs of this result are
available. It follows for instance from 
\cite[Theorem 5.8]{DiD21}, or from \cite[Proposition~3.21 and Theorem~1.6]{ADDDLD24}.
Applying this additional regularity \eqref{eq:1/2littleHolder} we obtain the 
following.

\begin{cor}\label{cor:Hq-1/2=0}
If $\G$ is a step-$2$ Carnot group and $S\subset \G$ is a $C^1_{\mathrm{H}}$ hypersurface, then $\HH^{q-\frac{1}{2}}(S)=0$.
\end{cor}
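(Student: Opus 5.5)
The plan is to rerun the foliation argument of Theorem~\ref{thm:tau_step2} with $\alpha=0$, replacing the $\frac12$-H\"older bound for $\phi$ by the little H\"older bound \eqref{eq:1/2littleHolder}. The dimension estimate for a.e.\ level set then upgrades to a vanishing measure estimate. Since $S$ is covered by countably many balls $B(x_0,r_0)$, it suffices to prove $\HH^{q-\frac12}(S\cap B(x_0,r_0))=0$ locally.

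\textbf{Step 1: local foliation.} Fix $x_0\in S$ with $X_1f>0$ near $x_0$, and take $\W,\V,U,\phi$ from Theorem~\ref{intrinsic parametrization is Holder}. Shrinking $U$ to some $U'\Subset U$, \eqref{eq:1/2littleHolder} holds for $\phi$ on $U'$. Define $\Psi(t,w)=w\phi(w)\exp(tX_1)$ on $(-1,1)\times U'$. As in the proof of Theorem~\ref{thm:tau_step2}, $\Psi$ is injective on its image $\mathcal O$, and
\[
\tau(x)=x_1-\tilde\phi(x).
\]
Here $\tilde\phi$ is $\phi$ composed with a polynomial map $P$ that is smooth, hence Lipschitz, on bounded sets. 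It follows that $\tau$ is little $\frac12$-H\"older on compact subsets of $\mathcal O$: its oscillation on any set of diameter $\rho$ is at most $\omega(\rho)\rho^{1/2}$, where $\omega(\rho)\to0$.

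\textbf{Step 2: level sets have zero measure.} This is the key step. Cover a compact piece $K\subset\mathcal O$ by about $\rho^{-q}$ cubes of side $\rho$. For each $t$, let $N(t)$ be the number of cubes meeting $\tau^{-1}(t)$. Each cube meets only level sets with $t$ in an interval of length at most $\omega(\rho)\rho^{1/2}$, so
\[
\int N(t)\,dt\lesssim \rho^{-q}\,\omega(\rho)\,\rho^{1/2}.
\]
Hence $\int \HH^{q-\frac12}_{\sqrt q\rho}(\tau^{-1}(t)\cap K)\,dt\lesssim\omega(\rho)\to0$. By Fatou's lemma, $\HH^{q-\frac12}(\tau^{-1}(t)\cap K)=0$ for a.e.\ $t$. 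This is the little-H\"older refinement of Remark~\ref{rem: dim holder level set}.

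\textbf{Step 3: transfer to $S$.} Pick one such $t$ with $|t|$ small. The leaf $S_t=\Psi(\{t\}\times U')$ is the right translate of $S\cap\Phi(U')$ by $\exp(tX_1)$. Right translation is an analytic diffeomorphism, so it is locally bi-Lipschitz for the Euclidean distance, and $\HH^{q-\frac12}$-null sets are preserved. Thus $\HH^{q-\frac12}(S\cap\Phi(U''))=0$ for a neighborhood $\Phi(U'')$ of $x_0$ in $S$, obtained by exhausting with compact sets $K$.

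The main obstacle is Step 2, the quantitative coarea-type estimate under only a little-H\"older modulus. The other delicate point is making sure that the translation parameter $t$ sweeps a full interval for which the leaves still contain a fixed neighborhood of $x_0$, after translating back. Continuity of $\Psi$ handles this by shrinking the domains.
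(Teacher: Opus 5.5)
Your proposal is correct and follows the paper's proof: the same foliation $\Psi(t,w)=w\phi(w)\exp(tX_1)$, the same observation that $\tau=x_1-\tilde\phi$ inherits the $\frac12$-little H\"older modulus, then almost every leaf is $\HH^{q-\frac12}$-null, and right translation transfers this to $S$. Your cube-counting Step~2 is a direct proof of Lemma~\ref{lem: holder+ level set}, which the paper derives from the Eilenberg inequality with the metric $|x-y|^{1/2}$, and the worry at the end is moot since $S_t\exp(-tX_1)=\Phi(U')$ exactly for every $t$, so no further shrinking is needed.
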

\begin{proof}
The implicit function theorem and \eqref{eq:1/2littleHolder} give open neighborhoods
 $\Omega\subset\G$ and $U\subset\W$ such that 
 $ S\cap \Omega=\graph \phi$
and $\phi\colon U\to\R$ is $\frac12$-little H\"older continuous.
 We can repeat the construction of the function $\tau\colon \mathcal O\to\R$ as in the proof of Theorem~\ref{thm:tau_step2}
 and following the corresponding notation. We obtain that $S_t=\tau^{-1}(t)$ for all $t\in\R$, where $\tau$ inherits the regularity of $\phi$, that is $\tau$ is also 
 $\frac12$-little H\"older continuous.
    Therefore, Lemma \ref{lem: holder+ level set} implies that
    \begin{equation}\label{eq:Hq1/2=0}
    \HH^{q-\frac{1}{2}}(S_t)=0
    \end{equation}for almost every $t\in(0,1)$.
    By the construction of Theorem \ref{thm:tau_step2}, 
    all $S_t$ are locally bi-Lipschitz images of $S$,
    hence \eqref{eq:Hq1/2=0} holds also for $S$.
\end{proof}

\begin{remark}\label{rem:littleHoldermanifold}
Arguing as in the proof of Corollary~\ref{cor:Holdermanifold} and 
taking into account the regularity result \eqref{eq:1/2littleHolder}, we can 
also conclude that in a step-2 Carnot group any $C^1_{\mathrm H}$ hypersurface is
also a $\frac12$-little H\"older manifold.
\end{remark}

\section{H\"older functions with irregular level sets}\label{section: app}

In this section, we present Rajala's construction of special classes
of H\"older continuous functions of one variable.
Other related results on H\"older continuous functions of one variable are available, \cite{AnttilaBaranyKaenmaki2025}.

In \cite{Rajala2026pr} Rajala constructs an $\alpha$-H\"older continuous function from $\R$ to $\R$ whose level sets all have positive $(1-\alpha)$-dimensional Hausdorff measure. 
Such a function exhibits maximal irregularity, as $1-\alpha$ is the highest Hausdorff dimension that a.e.\ level set can attain.

A similar result can be established for functions with $\alpha$-little H\"older regularity, only requiring that the Hausdorff dimension of all their level sets is at least $1-\alpha$.

\begin{theorem}\label{thm: rajala holder+}
    For every $\alpha,\beta\in(0,1)$, there exists a function $\psi\colon[0,1]\to[0,1]$ such that $\psi(0)=0$, $\psi(1)=1$ and
    \begin{enumerate}[label=\emph{(\roman*)}]
        \item $\dim_\mathrm E\psi^{-1}(t)\geq 1-\alpha$ for all $t\in[0,1]$;
        \vspace{4pt}
        \item for every $M>0$ we have
        \begin{equation*}
            \lim_{r\to0^+}\frac{e^{M|\log r|^\beta}}{r^{\alpha}}\sup\{|\psi(x)-\psi(y)|\colon|x-y|\leq r\}=0.
        \end{equation*}
    \end{enumerate}
\end{theorem}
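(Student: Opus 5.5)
We plan to adapt Rajala's Cantor-type construction of $\alpha$-H\"older functions with large level sets, slowing the scales down just enough to gain the factor $e^{-M|\log r|^\beta}$ in the modulus of continuity while still keeping dimension $1-\alpha$ for every level set. The function $\psi$ will be the uniform limit of piecewise linear, nondecreasing-on-average maps $\psi_n\colon[0,1]\to[0,1]$, with $\psi_n(0)=0$ and $\psi_n(1)=1$, built by an iterated self-similar replacement rule with scale-dependent parameters.

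At step $n$ the interval $[0,1]$ is partitioned into $N_1\cdots N_n$ intervals of length $\ell_n=(N_1\cdots N_n)^{-1}$. On each such interval $I$ the oscillation of $\psi_n$ is $h_n=(K_1\cdots K_n)^{-1}$, and $\psi_n|_I$ is affine increasing from some value $a$ to $a+h_n$. To pass to step $n+1$ we replace $\psi_n|_I$ by a zig-zag. We split $I$ into $N_{n+1}$ equal subintervals and split the range $[a,a+h_n]$ into $K_{n+1}$ equal pieces. The graph then goes up and down so that every range piece is crossed by at least $P_{n+1}$ subintervals, where $P_{n+1}\approx N_{n+1}/K_{n+1}$. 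Endpoints are preserved, so $\psi_n\to\psi$ uniformly and $\psi(0)=0$, $\psi(1)=1$.

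The parameters are fixed as follows. We take $K_{n+1}=N_{n+1}^{\alpha_n}$ with exponents $\alpha_n\downarrow\alpha$ slowly, chosen so that $h_n=\ell_n^{\alpha}\,e^{-\omega_n}$ with $\omega_n/|\log\ell_n|^\beta\to\infty$. For example, we can arrange $\omega_n\approx|\log\ell_n|^{\beta'}$ with $\beta<\beta'<1$, by taking $N_n$ to be a fixed large integer and $\alpha_n-\alpha\approx n^{\beta'-1}$.

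\textbf{Modulus of continuity.} For $|x-y|\le r$ with $\ell_{n+1}\le r<\ell_n$, the points $x,y$ lie in at most two adjacent step-$n$ intervals. Inside a step-$(n+1)$ interval the oscillation of $\psi$ is at most $h_{n+1}$ up to a constant, and $\psi_{n+1}$ is affine with slope at most $h_{n+1}/\ell_{n+1}$ there. This gives
\[
|\psi(x)-\psi(y)|\lesssim h_{n+1}+\frac{h_{n+1}}{\ell_{n+1}}\,r\lesssim \frac{h_{n+1}}{\ell_{n+1}}\,r\le r^\alpha e^{-\omega_{n+1}}\Big(\frac{r}{\ell_{n+1}}\Big)^{1-\alpha}.
\]
Since $N_n$ is bounded, $r/\ell_{n+1}$ is bounded, so $r^{-\alpha}\sup\{|\psi(x)-\psi(y)|\colon |x-y|\le r\}\lesssim e^{-\omega_{n+1}}$. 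Moreover $e^{M|\log r|^\beta}e^{-\omega_{n+1}}\to0$ for every $M$, which proves (ii).

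\textbf{Level sets.} For fixed $t$, the set $\psi^{-1}(t)$ contains a Cantor set built as follows: at each step, every chosen interval contains at least $P_{n+1}\approx N_{n+1}^{1-\alpha_n}$ children on which $\psi_{n+1}$ crosses the range piece containing $t$. Each child again contains $t$ in the interior of its range, or on the boundary, which we handle by allowing closed pieces. We then apply a mass distribution argument. Spread mass uniformly over the children, passing to a separated subfamily (every other child) to lose only a constant factor. The resulting measure satisfies $\mu(J)\lesssim |J|^{1-\alpha_n-o(1)}$ for intervals $J$ at scale $\ell_n$. Since $\alpha_n\to\alpha$, the mass distribution principle gives $\dim_\mathrm E\psi^{-1}(t)\ge 1-\alpha-\eps$ for every $\eps>0$, which proves (i).

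\textbf{Main obstacle.} The hard step is the combinatorial zig-zag at each stage: it must cross every range piece at least $\approx N/K$ times while staying within $[a,a+h_n]$ and joining the endpoint values continuously. A snake path that sweeps the full range back and forth $N/K$ times does this with integer rounding, as in Rajala. The second delicate point is balancing $\alpha_n\to\alpha$ slowly enough that the product of the factors $N_k^{\alpha_k-\alpha}$ grows like $e^{|\log\ell_n|^{\beta'}}$, while the level-set dimension loss $\alpha_n-\alpha$ still tends to zero.
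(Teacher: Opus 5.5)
Your architecture matches the paper's. You use a Rajala-type self-affine zigzag (the paper's $N$-shaped pattern is your snake with three sweeps), a slowly drifting mixture of scales so that $h_n=\ell_n^{\alpha}e^{-\omega_n}$ with $\omega_n\approx|\log\ell_n|^{\beta'}$, $\beta<\beta'<1$, and a mass distribution argument on level sets. Your bound via the Lipschitz constant $h_{n+1}/\ell_{n+1}$ of $\psi_{n+1}$ is a valid substitute for the paper's rectangle estimate in (ii).

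\textbf{Main gap: the parameter scheme cannot be realized.} With $N_n\equiv N$ fixed, $K_{n+1}=N^{\alpha_n}$ must be an integer. Moreover, a zigzag crossing every range piece exactly $P$ times forces $N=PK$ with $P$ odd, since each piece is crossed once more upward than downward. So $\log K_{n+1}/\log N$ takes only finitely many values and cannot drift to $\alpha$. What replaces it is the device that carries the paper's proof. You alternate between two admissible integer configurations whose exponents bracket $\alpha$. Among the first $k$ steps, the number of larger-exponent steps is $\varphi(k)=\lfloor\rho k+(1-\rho)k^{\beta^*}\rfloor$. Here $\rho$ kills the part of $\log(\ell_k^{\alpha}/h_k)$ linear in $k$, and the sublinear term leaves $\omega_k\approx c\,k^{\beta^*}$. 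With exact counts, $\prod_{i\le k}P_i=h_k/\ell_k$, so the same quantity governs both statements. It gives (ii) as $e^{-\omega_k}$, and it gives the Frostman ratio of a level-$k$ interval as $\ell_k^{\alpha}/h_k=e^{\omega_k}=\ell_k^{-o(1)}$. The paper uses $(N,K,P)=(3m,m,3)$ and $(3(m+1),m+1,3)$ with $\frac{\log m}{\log 3m}\le\alpha<\frac{\log(m+1)}{\log 3(m+1)}$. If you keep a fixed $N$, you could take $N=3^a$ and $K\in\{3^j,3^{j+1}\}$ with $j\le\alpha a<j+1$.

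\textbf{Second problem: the dimension count must be exact.} Because (ii) forces $N$ to stay bounded, a constant loss per generation compounds. Suppose you only have $P_{n+1}\ge c\,N_{n+1}/K_{n+1}$ with $c<1$, or you keep every other child (so $c\approx 1/2$). Then a level-$n$ interval carries extra mass $c^{-n}=\ell_n^{-|\log c|/\log N}$. The mass distribution principle then only yields $\dim_\mathrm E\psi^{-1}(t)\ge 1-\alpha-|\log c|/\log N$, which is not (i). So you need $N=PK$ exactly on all but $o(n)$ of the steps, and no thinning. Thinning is unnecessary anyway. In dimension one the chosen level-$n$ intervals have equal length $\ell_n$ and disjoint interiors, so an interval of length in $(\ell_{n+1},\ell_n]$ meets at most three of them. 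That bounded-overlap fact is all the paper uses for the $3^k$ intervals of $G_k\cap\{y=t\}$.

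\textbf{Minor point.} After the first step, half of the subintervals carry a decreasing affine piece. The replacement rule must be reflected on those (the paper's orientation), whereas your description treats every piece as increasing.
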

For $\alpha=1/2$ the previous result recovers \cite[Lemma~3.5]{KirSer04} and \cite[Proposition~2.6]{Rajala2026pr}. The main ideas underlying the proof of Theorem~\ref{thm: rajala holder+} are already present in \cite{Rajala2026pr}. We adapt that approach to every 
H\"older exponent of continuity $0<\alpha<1$.

We observe that the function $\psi$ in Theorem~\ref{thm: rajala holder+} is slightly more regular than $\alpha$-H\"older continuous, but it cannot be $\sigma$-H\"older for any $\alpha<\sigma\le 1$, 
due to Remark~\ref{rem: dim holder level set}.
Moreover, the modulus of continuity $e^{-M|\log r|^\beta}r^\alpha$ decays faster than $|\log r|^{-K}r^\alpha$ for any $K>0$, hence $\psi$ in  Theorem~\ref{thm: rajala holder+} gains slightly more regularity than that in \cite[Lemma 3.5]{KirSer04}.
We observe that in all of these cases the modulus of continuity implies $\HH^{1-\alpha}(\psi^{-1}(t))=0$ for almost every $t\in\R$, see Lemma~\ref{lem: holder+ level set}.

In the following subsections~\ref{subs: construction of g} and~\ref{subs: frostman}, we describe the ideas of Rajala's construction, which will be used in the proof of Theorem~\ref{thm: rajala holder+}.

\subsection{Construction of the irregular Hölder function.}\label{subs: construction of g}

We fix the square $Q = [0,1]^2$. For each $m \in \mathbb{N}^+$ and $i \in \{0,\ldots,3m-1\}$, we define
\begin{equation*}
f_i^m(x,y)
\coloneqq  \left(
  \frac{x+i}{3m},
  (-1)^{\left\lfloor \frac{i}{m} \right\rfloor}
  \frac{y + i - \left\lfloor \frac{i}{m} \right\rfloor m}{m}
  + \frac{1 + (-1)^{\left\lfloor \frac{i}{m} \right\rfloor + 1}}{2}
  \right),
\end{equation*}
where $\lfloor x\rfloor=\max\{k\in\Z:k\le x\}$ for every $x\in\R$.

Each $f_i^m$ maps $Q$ into a rectangle $R_i \coloneqq  f_i^m(Q)\subseteq Q$, with base $1/(3m)$ and height $1/m$.  
These rectangles are arranged in an ``$N$-shaped'' pattern. Indeed, 
plotting the $3m$ rectangles produces three vertical columns across $Q$. In the first and third columns ($0 \le i \le m-1$ and $2m \le i \le 3m-1$) the rectangles are arranged from bottom to top as $i$ increases, while in the middle column ($m \le i \le 2m-1$) they are arranged from top to bottom, see Figure~\ref{fig:rajala1}. 

\begin{figure}[ht]
    \centering        \includegraphics[width=0.66\linewidth]{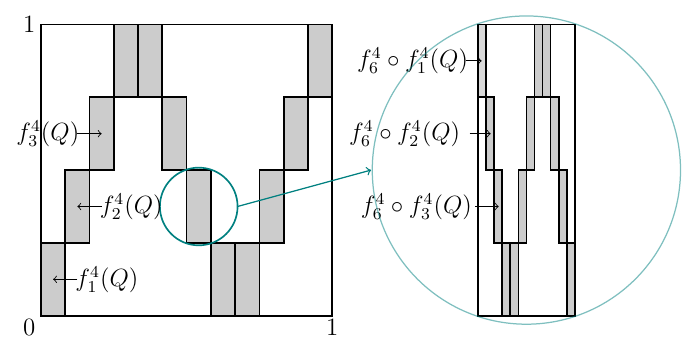}
    \caption{On the left, the maps $(f^4_i)_{i=0}^{11}$ produce an $N$-shaped pattern inside $[0,1]^2$. On the right, the maps $(f^4_6\circ f^4_i)_{i=0}^{11}$ produce a reversed $N$-shaped pattern inside $f^4_6(Q)$.}
        \label{fig:rajala1}
\end{figure}

For each $m\in\N$ with $m\geq2$, we define a map on compact subsets of $Q$ by
\begin{equation*}
F_m(A) \coloneqq  \bigcup_{i=0}^{3m-1} f_i^m(A).
\end{equation*}
Each $f_i^m$ is a contraction on $Q$; hence $F_m$ is a contraction on the space of nonempty compact subsets of $Q$ endowed with the Hausdorff distance.  
By the Banach fixed-point theorem, $F_m$ admits a unique fixed compact set, called its \emph{attractor}, given by
\begin{equation}\label{def of Gm}
G_m \coloneqq  \bigcap_{k=1}^\infty F_m^{\,k}(Q)
     = \bigcap_{k=1}^\infty
       \underbrace{F_m \circ \cdots \circ F_m}_{k\ \text{times}}(Q),
\end{equation}
see \cite[Theorem 9.1]{Falconer2003} for a reference.
Then, we can prove that $G_m$ is the graph of a $\frac{\log m}{\log (3m)}$-H\"older function whose level sets have positive $\left(1-\frac{\log m}{\log (3m)}\right)$-dimensional Hausdorff measure. The proof will be given in a more general form below.

For the general case, fix $\alpha\in(0,1)$ and let $m\in\N^+$ be the unique integer such that 
    \begin{equation*}
        \frac{\log m}{\log (3m)}\leq \alpha < \frac{\log (m+1)}{\log (3(m+1))}.
    \end{equation*}
    To get the H\"older exponent $\alpha$, we consider the composition of both functions $F_m$ and $F_{m+1}$. For every $k\in\N$, we define $G_0=Q$ and
    \begin{equation*}    G_k\coloneqq  F_{j_1}\circ\ldots\circ F_{j_k}(Q)\quad\text{and}\quad G\coloneqq \bigcap_{k=0}^\infty G_k,
    \end{equation*}
    where for all $l\in\N^+$ we set
     \begin{equation*}
    j_l=
        \begin{cases}
            m&\quad\text{if }\varphi(l)=\varphi(l-1),\\
            m+1&\quad \text{if }\varphi(l)=\varphi(l-1)+1.
        \end{cases}
    \end{equation*}
    and $\varphi\colon \N\to\N$ is a nondecreasing function satisfying
    \begin{equation*}
        \varphi(0)=0 \quad\text{and}\quad\varphi(l)\leq\varphi(l-1)+1. 
    \end{equation*}
    When $m=1$, we also assume that $\varphi(l)\to+\infty$ as $l\to+\infty$.
We notice that for each $k\ge1$ we have 
\begin{equation*}
\varphi(k)=\sum_{l=1}^k (\varphi(l)-\varphi(l-1))=\text{card}\{l\colon j_l=m+1,\,1\leq l\leq k\}.
\end{equation*}
In other words, after $k$ iterations, we have used $\varphi(k)$ times the function $F_{m+1}$ and $k-\varphi(k)$ times the function $F_m$.
Thus, $G_k$ consists of $(3m)^{k-\varphi(k)}(3(m+1))^{\varphi(k)}$ rectangles of the form
\begin{equation}\label{eq: def of Rk}
    R^k\coloneqq f_{i_1}^{j_1} \circ \cdots \circ f_{i_k}^{j_k}(Q),
\end{equation}
for every $1\leq l\leq k$ and $i_l\in\{0,\ldots,3j_l-1\}$.
These level-$k$ rectangles have base and height, respectively, of length
\begin{equation*}
    b_k\coloneqq (3m)^{-k+\varphi(k)}(3(m+1))^{-\varphi(k)}\quad\text{and}\quad h_k\coloneqq m^{-k+\varphi(k)}(m+1)^{-\varphi(k)}.
\end{equation*}
For every $k\geq1$ the family of level-$(k+1)$ rectangles is nested within the family of the rectangles of level $k$. Indeed, 
\begin{equation*}
f_{i_1}^{j_1} \circ \cdots \circ f_{i_k}^{j_{k}}\circ f_{i_{k+1}}^{j_{k+1}}(Q)
   \subseteq f_{i_1}^{j_1} \circ \cdots \circ f_{i_k}^{j_k}(Q).
\end{equation*}

For every $x_0\in[0,1]$, the set $G\cap\{x=x_0\}$ is the intersection of all nested sets $G_k\cap\{x=x_0\}
$ for $k\in\N^+$, which are compact nonempty intervals of length less than or equal to $2h_k$, according to Lemma~\ref{lem:Tapio graph} below. Since $h_k\to 0$ as $k\to+\infty$, the set $G\cap\{x=x_0\}
$ is a singleton, thus $G$ is the graph of a function $\psi\colon[0,1]\to[0,1]$.

\begin{lemma}\label{lem:Tapio graph}
    For every $k\in\N$ and $x_0\in[0,1]$, the line $x=x_0$ intersects $G_k$ in a single or in two consecutive level-$k$ rectangles, in one of the configurations A,B,C or D of Figure~\ref{fig:tapio2}.
\end{lemma}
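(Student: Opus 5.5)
The plan is to argue by induction on $k$, using the self-similar structure $G_{k+1}=F_{j_1}\circ\cdots\circ F_{j_k}\big(F_{j_{k+1}}(Q)\big)$. Inside each level-$k$ rectangle $R^k=f^{j_1}_{i_1}\circ\cdots\circ f^{j_k}_{i_k}(Q)$, the set $G_{k+1}\cap R^k$ is the image of the level-one pattern $F_{j_{k+1}}(Q)$ under the affine map $f^{j_1}_{i_1}\circ\cdots\circ f^{j_k}_{i_k}$. Each $f^m_i$ is a diagonal affine map: it scales $x$ by $1/(3m)$ and $y$ by $\pm1/m$, with the sign $(-1)^{\lfloor i/m\rfloor}$. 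So this composition is a diagonal affine map that possibly reflects the vertical direction. Hence the level-$(k+1)$ rectangles inside $R^k$ form either an $N$-pattern or a reversed $N$-pattern, as in Figure~\ref{fig:rajala1}. I will record the orientation as part of the inductive hypothesis, namely whether the chain of subrectangles enters $R^k$ at its bottom-left corner and leaves at its top-right corner, or enters top-left and leaves bottom-right.

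\emph{Base case and level-one facts.} Directly from the formula for $f^m_i$, I will check three things. First, inside a column, consecutive rectangles $R_i,R_{i+1}$ meet exactly at a common corner: the top-right corner of $R_i$ is the bottom-left corner of $R_{i+1}$ in ascending columns, and mirrored in the descending column. Second, at the two turns, $R_{m-1},R_m$ both lie in the top strip $[1-1/m,1]$ and $R_{2m-1},R_{2m}$ both lie in the bottom strip $[0,1/m]$; each of these pairs shares a full vertical edge. Third, the lines $x=0$ and $x=1$ meet only $R_0$ and $R_{3m-1}$, in the segments $\{0\}\times[0,1/m]$ and $\{1\}\times[1-1/m,1]$. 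Consequently a vertical line meets $F_m(Q)$ in one of the following ways: a single rectangle (A); two corner-touching consecutive rectangles, ascending or descending (B, C); or two side-by-side rectangles at the same height at a turn (D). This settles $k=1$, and $k=0$ is trivial.

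\emph{Inductive step.} Suppose the line $x=x_0$ meets $G_k$ in configuration A, B, C or D. If it crosses the interior of a single level-$k$ rectangle $R^k$, the level-one analysis transported by the affine map gives an admissible configuration of level-$(k+1)$ rectangles. The only change is that a vertical reflection swaps B and C, and swaps top-D with bottom-D. If instead $x_0$ is the common vertical boundary of two consecutive level-$k$ rectangles, I use the third level-one fact: each of the two rectangles contributes only its extremal subrectangle at that edge, namely the last one of the first rectangle and the first one of the second. In configurations B and C these two subrectangles touch at the shared corner of the level-$k$ rectangles, because of the orientation bookkeeping (a chain leaves at the corner where the next one enters). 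This gives B or C at level $k+1$. In configuration D both level-$k$ rectangles have their exit and entry corners on the same horizontal side, so the two extremal subrectangles are adjacent at the same height, which is D again. Consecutive level-$(k+1)$ rectangles are consecutive in the lexicographic order of the indices $(i_1,\dots,i_{k+1})$.

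The main obstacle is the orientation bookkeeping in this boundary case. I need a clean invariant: every level-$k$ rectangle's chain runs from one left corner to the diagonally opposite right corner, and consecutive level-$k$ rectangles glue exit corner to entry corner. I expect to prove this invariant simultaneously by induction, from the explicit formula for the sign $(-1)^{\lfloor i/m\rfloor}$ and the translation terms of $f^m_i$. Once it is in place, the length bound of $2h_k$ used afterwards follows immediately, since each configuration covers at most two consecutive rectangles of height $h_k$.
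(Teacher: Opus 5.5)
Your proposal is correct and follows essentially the paper's proof. It is an induction on $k$: the single-rectangle case is reduced to the rescaled level-one picture, and the two-consecutive-rectangles case is reduced to the rightmost child $f^{j_{k+1}}_{3j_{k+1}-1}$ of the left rectangle and the leftmost child $f^{j_{k+1}}_0$ of the right one, which inherit their parents' orientations. Your entry/exit-corner invariant is a more explicit form of the paper's orientation bookkeeping. The only discrepancy is cosmetic: in Figure~\ref{fig:tapio2} the labels A--D denote the four orientation pairs of two consecutive rectangles (your B, C and the top and bottom versions of your D), and the single-rectangle case is not one of them.
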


\begin{proof}
First, by induction on $k$,  for each level-$k$ rectangle $R^k$ we define its positive or negative \emph{orientation}.
For $k=0$ we say that $R^0=Q$ is positively oriented.
For $k\geq0 $, a level-($k+1$) rectangle $R= f_{i_1}^{j_1} \circ \cdots \circ f_{i_{k+1}}^{j_{k+1}}(Q)$ has the same orientation as its predecessor $f_{i_1}^{j_1} \circ \cdots \circ f_{i_k}^{j_k}(Q)$ if  $0 \le i_{k+1} \le m-1$ or $2m \le i_{k+1} \le 3m-1$, and the opposite orientation if $m \le i_{k+1} \le 2m-1$.    
The orientation is connected to the arrangement of the rectangles. If the level-$k$ rectangle $R^k$ is positively oriented, then the family of level-$(k+1)$ rectangles contained in $R^k$ forms an $N$-shaped pattern; if $R^k$ is negatively oriented, they form a $N$-shape reflected over the $x$-axis (see Figure~\ref{fig:rajala1}). 

To prove the statement of the Lemma, we use induction on $k$.
\begin{figure}
    \centering
    \includegraphics[width=0.5\linewidth]{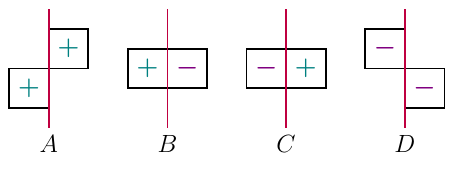}
    \caption{The four possible configurations $A,B,C,D$ in which the line $x=x_0$ (in red) intersects two consecutive rectangles of $G_k$, with their respective orientations.}
    \label{fig:tapio2}
\end{figure}
For $k=0$ we have $G_0=Q$ and the statement is trivial. For $k=1$, the statement is clearly true, see Figure~\ref{fig:rajala1}.
Next, we assume that the statement holds for some level $k\geq1$ and we prove it for $k+1$. We distinguish two cases. First, suppose that the line $x=x_0$ intersects $G_k$ in a single rectangle $R^k$. Since the collection of level-$(k+1)$ rectangles contained in $R^k$ forms the same rescaled $N$-shape (or reflected $N$-shape) inside $R_k$, the configurations of the intersection of $x=x_0$ with $G_{k+1}$ are exactly as in the case $k=1$. Thus, the statement holds.

The second and nontrivial case is when the line $x=x_0$ intersects $G_k$ in two consecutive rectangles, say $R_1^k=f_{i_1}^{j_1} \circ \cdots \circ f_{i_{k}}^{j_{k}}(Q)$ and  $R_2^k= f_{h_1}^{j_{1}} \circ \cdots \circ f_{h_{k}}^{j_{k}}(Q)$, ordered from left to right. Then, at level $k+1$ the line $x=x_0$ intersects only the rightmost level-$(k+1)$ rectangle nested inside $R_1^k$, only the leftmost level-$(k+1)$ rectangle nested in $R_2^k$. This two level-$(k+1)$ rectangles are, respectively, 
\begin{equation*}
    R_1^{k+1}= f_{i_1}^{j_1} \circ \cdots \circ f_{i_{k}}^{j_{k}}\circ f_{3m-1}^{j_{k+1}}(Q)\quad\text{and}\quad R_2^{k+1}\coloneqq f_{l_1}^{j_{1}} \circ \cdots \circ f_{l_{k}}^{j_{k}}\circ f_{0}^{j_{k+1}}(Q).
\end{equation*} 
By definition of orientation, these two rectangles have the same orientation of $R_1^k$ and $R_2^k$, respectively. 
Moreover, their configuration is the same as the $A,B,C$ or $D$ configuration of $R_1^k$ and $R_2^k$, as shown in Figure~\ref{fig:tapio4}.
\begin{figure}
    \centering
    \includegraphics[width=0.9\linewidth]{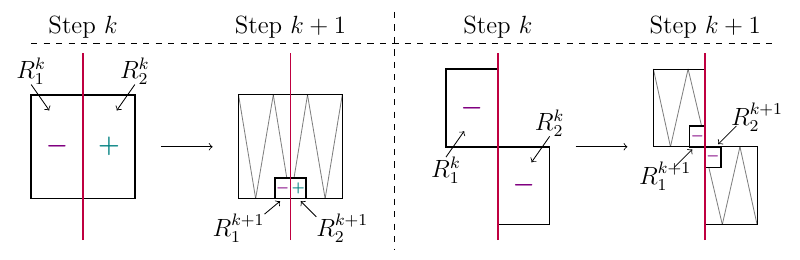}
    \caption{An illustration of how the line $x=x_0$ (in red) intersects two consecutive rectangles at steps $k$ and $k+1$, with configuration $C$ shown on the left and configuration $D$ on the right.}
    \label{fig:tapio4}
\end{figure}
\end{proof}

\subsection{Construction of measures on level sets}\label{subs: frostman}

To compute the dimension of the level sets of $\psi$, we use the Frostman Lemma, see \cite[Theorem 8.8]{mattila}. 
\begin{lemma}\label{frostman lemma}
    Let $A\subset\R^n$ be a Borel set and let $0<s\le n$. Then the following are equivalent:
    \begin{enumerate}[label=(\roman*)]
        \item $\HH^s(A)>0$;
        \item there exists $C>0$ and a Radon measure $\mu$ compactly supported in $A$, with $0<\mu(\R^n)<+\infty$, and satisfying $\mu(B(x,r))\leq Cr^s$ for every $x\in\R^n$ and $r>0$.
    \end{enumerate}
\end{lemma}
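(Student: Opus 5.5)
The statement is Frostman's lemma, which the paper quotes from \cite[Theorem 8.8]{mattila}. Even though it can be cited, I would sketch the classical proof: the implication (ii)$\Rightarrow$(i) is a direct covering estimate, and (i)$\Rightarrow$(ii) constructs the measure from a dyadic mass distribution.

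For (ii)$\Rightarrow$(i), take any countable cover $\{E_j\}$ of $A$ with $\diam E_j\le\delta$. Each nonempty $E_j$ lies in a closed ball $B(x_j,\diam E_j)$, so, since $\mu$ is concentrated on $A$,
\[
0<\mu(\R^n)=\mu(A)\le\sum_j\mu\big(B(x_j,\diam E_j)\big)\le C\sum_j(\diam E_j)^s .
\]
Taking the infimum over all such covers gives $\HH^s_\delta(A)\ge\mu(\R^n)/C$ for every $\delta>0$, and hence $\HH^s(A)>0$.

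For (i)$\Rightarrow$(ii), I would first reduce to compact sets. Borel sets are Suslin, and the inner regularity of Hausdorff measure (Davies' theorem) gives a compact $K\subset A$ with $\HH^s(K)>0$. This is the step I expect to be the main obstacle, since it is the one genuinely deep input. In our application it can be bypassed, because the level sets $\psi^{-1}(t)$ of the continuous function $\psi$ are already compact.

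With $K\subset[0,1)^n$ compact and $\HH^s(K)>0$, I would carry out the dyadic construction. Fix $k$, and on each dyadic cube $D$ of side $2^{-k}$ meeting $K$ put mass $2^{-ks}$, spread uniformly. Then go up the generations: whenever a dyadic cube $D$ of side $2^{-j}$ with $j<k$ carries mass exceeding $\ell(D)^s$, rescale the measure inside $D$ so that its mass is exactly $\ell(D)^s$. The resulting measure $\mu_k$ has the following properties.
\begin{enumerate}[label=(\alph*)]
\item $\mu_k(D)\le\ell(D)^s$ for every dyadic cube $D$.
\item Every point of $K$ lies in a maximal dyadic cube $D$ on which equality $\mu_k(D)=\ell(D)^s$ holds. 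These maximal cubes cover $K$, and therefore $\mu_k(\R^n)\ge c\,\HH^s_\infty(K)$.
\item $\mu_k$ is supported in the $\sqrt n\,2^{-k}$-neighborhood of $K$.
\end{enumerate}
The lower bound in (b) is positive, because the net measure built from dyadic cubes is comparable to $\HH^s_\infty$, and $\HH^s_\infty(K)>0$ whenever $\HH^s(K)>0$. In addition, $\mu_k(\R^n)\le 1$.

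Finally, I would pass to a weak$^*$ limit $\mu$ of a subsequence of the $\mu_k$. Since $K$ is compact, $\mu$ is supported in $K$, and it satisfies $c\,\HH^s_\infty(K)\le\mu(\R^n)\le1$. The bound on dyadic cubes passes to the limit, up to boundary effects that are handled with open dyadic neighborhoods. Every ball $B(x,r)$ meets at most $2^n$ dyadic cubes of side comparable to $r$, so $\mu(B(x,r))\le Cr^s$ with $C$ depending only on $n$ and $s$.
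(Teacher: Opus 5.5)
Your proposal is correct and is essentially the standard proof behind Mattila's Theorem 8.8, which the paper cites without reproducing: a covering estimate for (ii)$\Rightarrow$(i), and for (i)$\Rightarrow$(ii) a reduction to a compact subset via Davies' theorem, then dyadic mass redistribution and a weak$^*$ limit. One small correction to your aside: the paper only uses the easy direction (ii)$\Rightarrow$(i), since it builds the measure on $G\cap\{y=t\}$ directly, so neither Davies' theorem nor compactness of the level sets is needed in the application.
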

Our objective is to show that $\HH^s(\psi^{-1}(t))>0$ for every $s< 1-\alpha$. By Frostman lemma, this reduces to constructing a Radon measure $\mu$ supported on $\psi^{-1}(t)$ with the appropriate scaling exponent. 
We start from the following technical lemma, whose proof can be carried out in the same spirit as that of Lemma~\ref{lem:Tapio graph}.
\begin{lemma}\label{lem: Gk intersect y=y_0}
   For every $k\in\N^+$ the set $G_k\cap\{y=t\}$ intersects $3^k$ or $2\cdot3^k$ rectangles of the level $k$, therefore forming $3^k$ intervals of length $b_k$ or $2b_k$ and having pairwise disjoint interiors. More precisely, each of these level-$k$ intervals contains $3$ level-$(k+1)$ sub-intervals. 
\end{lemma}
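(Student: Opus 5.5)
We argue by induction on $k$, in the same spirit as Lemma~\ref{lem:Tapio graph}. The roles of the two axes are now exchanged: we look at the horizontal line $\{y=t\}$ instead of a vertical one. The key observation is a vertical counterpart of the $N$-shaped pattern. For every level-$k$ rectangle $R^k$, the level-$(k+1)$ rectangles inside it form three columns. Each column is a stack of $j_{k+1}$ rectangles of height $h_{k+1}$, consecutive within the column, and the stack covers the full height $h_k$ of $R^k$. Consequently, any horizontal line meeting the interior of $R^k$ meets each of the three columns. In each column it meets either exactly one rectangle, or two vertically adjacent rectangles when $t$ lies on their common horizontal side.

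\emph{Base case.} For $k=1$ this is read off directly from the definition of $f_i^m$. The line $\{y=t\}$ meets each of the three columns of $F_{j_1}(Q)$, giving three intervals of length $b_1$. If the line passes through a common horizontal edge of two stacked rectangles, the two intersections coincide as intervals of length $b_1$, since the rectangles lie in the same column. The alternative count $2\cdot 3^k$ and the length $2b_k$ arise when two rectangles are horizontally adjacent across a column boundary at the same height. This happens at the top (or bottom) of the $N$, where the line meets the last rectangle of one column and the first of the next. Their intersections with the line are then two abutting intervals of length $b_k$, and we merge them into one interval of length $2b_k$.

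\emph{Inductive step.} Assume $G_k\cap\{y=t\}$ is a union of $3^k$ intervals $I$, each of length $b_k$ or $2b_k$, with disjoint interiors, each coming from one rectangle or from two horizontally adjacent rectangles of level $k$. Fix such an interval. If it comes from a single $R^k$, the column structure inside $R^k$ (rescaled $N$ or reflected $N$, according to its orientation) produces exactly $3$ sub-intervals. Each has length $b_{k+1}$, or $2b_{k+1}$ when $t$ hits a corner junction. If it comes from two horizontally adjacent rectangles $R^k_1,R^k_2$ sharing a vertical edge, we use the orientation bookkeeping of Lemma~\ref{lem:Tapio graph}. The line then lies at the common top or bottom height. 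Inside $R^k_1$ only its last column reaches that height there, and inside $R^k_2$ only its first column does. Together with the remaining columns this should again give $3$ sub-intervals, one of which is the merged interval across the common edge.

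Disjointness of interiors is inherited from the disjointness of interiors of the rectangles. The number of intervals multiplies by $3$ at each level, which gives $3^{k+1}$. This is the main obstacle: a careful case analysis over orientations (configurations analogous to A--D) of how the line at an extremal height meets the sub-rectangles of two adjacent rectangles, ensuring exactly three sub-intervals rather than two or four. I would handle it by noting that the first and third columns of an $N$ reach the top, while the middle one reaches top and bottom only at its ends. This lets one check each configuration by a picture-level argument, as in Figure~\ref{fig:tapio4}.
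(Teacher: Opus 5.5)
Your strategy, induction on $k$ in the spirit of Lemma~\ref{lem:Tapio graph}, is what the paper has in mind; the paper gives no further detail. However, the geometric picture you induct on is wrong, and it already breaks the base case.

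By definition $f^{j}_i(Q)$ has $x$-range $[\frac{i}{3j},\frac{i+1}{3j}]$, so the $3j$ rectangles have pairwise distinct $x$-ranges. In each third of $Q$ the $j$ rectangles form a monotone staircase, and two consecutive ones in the same third meet only at a corner, never along a horizontal side. Only at the two turning points of the $N$ ($i=j-1,j$ and $i=2j-1,2j$) do consecutive rectangles share a vertical edge. Three vertical stacks of base $b_{k+1}$ would only have total width $b_k/j_{k+1}$, so they could not fill $R^k$.

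Both of your base-case claims are therefore reversed. Take $j=j_1$ and $t=l/j$ with $1\le l\le j-1$. In each third, the line contains the top edge of the lower and the bottom edge of the upper of two consecutive rectangles. These edges abut at the common corner rather than coinciding, giving $6$ rectangles and $3$ intervals of length $2b_1$. This corner mechanism, not the turning points, is the source of the alternative $2\cdot 3^k$, $2b_k$. For $t=1$ the line meets only the three rectangles $i=j-1,j,3j-1$. Merging the first two across the turning point, as you propose, leaves two intervals and contradicts the count $3^k$.

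Likewise, disjointness of the intervals comes from the $x$-projections of distinct level-$k$ rectangles having disjoint interiors. Disjoint interiors of the rectangles would not suffice: in your stacked picture two intersections coincide.

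The inductive step, which you leave as ``should again give $3$ sub-intervals'', needs the following mechanism. Level-$k$ rectangles have $y$-ranges $[ah_k,(a+1)h_k]$ with $a\in\Z$, and $h_k\Z\subset h_{k+1}\Z$.
\begin{itemize}
\item If $t\notin h_k\Z$, every rectangle met is crossed in the interior of its $y$-range. It contains $3$ sub-rectangles met by the line, or $3$ corner-touching pairs if $t\in h_{k+1}\Z$.
\item If $t\in h_k\Z$, the line runs along a top or bottom edge, and exactly three sub-rectangles meet it. With $j=j_{k+1}$ and heights measured in the coordinates of $Q$ pulled back by the map defining the rectangle, their indices are $j-1,j,3j-1$ at height $1$ and $0,2j-1,2j$ at height $0$.
\end{itemize}
Hence doubling occurs exactly once, at the first $k^*\ge 1$ with $t\in h_{k^*}\Z$. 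It never occurs if $t\in\{0,1\}$ or if no such $k^*$ exists, and then nothing may be merged, even though some intervals abut.

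For $k\ge k^*$, the invariant is as follows. Each interval comes from two consecutive rectangles $P_1,P_2$ of the chain such that the image of $(1,1)$ under the map defining $P_1$, which equals the image of $(0,0)$ under the map defining $P_2$, lies on the line. Thus the line is at pulled-back height $1$ in $P_1$ and $0$ in $P_2$. The three sub-intervals are then:
\begin{itemize}
\item the turning pair $(j-1,j)$ inside $P_1$;
\item the last sub-rectangle of $P_1$ together with the first of $P_2$;
\item the turning pair $(2j-1,2j)$ inside $P_2$.
\end{itemize}
Each of these satisfies the invariant again. This is where the orientation bookkeeping actually enters, and it cannot be carried out on a stacked-column model that merges only across vertical edges.
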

Let 
\begin{equation*}
    \{1,2,3\}^{<\N^+}\coloneqq \bigcup_{k\in\N^+}\{(\sigma_1,\ldots,\sigma_k)\colon \sigma_i\in\{1,2,3\} \text{ for each } i=1,\ldots,k\},
\end{equation*}
and 
\begin{equation*}
    \{1,2,3\}^{\N^+}\coloneqq \{(\sigma_1,\sigma_2,\sigma_3,\ldots)\colon \sigma_i\in\{1,2,3\} \text{ for each } i\in\N^+\},
\end{equation*}
be the spaces of finite and infinite sequences of elements of $\{1,2,3\}$, respectively. We index the $3^k$ intervals in $G_k\cap\{y=t\}$ given by Lemma \ref{lem: Gk intersect y=y_0} using elements of $\{1,2,3\}^{\N^+}$ as follows. We denote by $J_1$, $J_2$ and $J_3$ the intervals at the first level $G_1\cap\{y=0\}$. Then, if $J_{\sigma_1,\ldots,\sigma_k}$ denotes a level-$k$ interval, then at level $k+1$ it contains $3$ subintervals, which we enumerate from left to right as $J_{\sigma_1,\ldots,\sigma_k,1},J_{\sigma_1,\ldots,\sigma_k,2},J_{\sigma_1,\ldots,\sigma_k,3}$.

We aim to define a probability measure on $G\cap\{y=t\}$ by distributing mass equally among the $3^k$ intervals of each level $k$.

\emph{Step 1.} We construct a Borel measure $\nu$ on the space $X\coloneqq \{1,2,3\}^{\N^+}$ endowed with the product topology induced by the discrete topology on $\{1,2,3\}$.
Consider the base of clopen sets for $X$:
\begin{equation*}
    \mathcal{I}\coloneqq \{I_{\sigma_1,\ldots,\sigma_k}\colon k\in\N^+, \sigma_i=1,2,3\}\cup X,
\end{equation*}
where
\begin{equation*}
\begin{split}
I_{\sigma_1,\ldots,\sigma_k}\coloneqq \{\rho\in X\colon \rho_i=\sigma_i\text{ for all } i=1,\ldots k\}
    \end{split}
\end{equation*}
We define the function $\nu\colon \mathcal{I}\to\R$ such that
\begin{equation*}    \nu(X)=1\quad\text{and}\quad\nu(I_{\sigma_1,\ldots,\sigma_k})=3^{-k}.
\end{equation*}
It is easily observed that $\nu$ is a pre-measure on the semiring of sets  $\mathcal{I}$. 
Thus, by the Carath\'odory--Hahn theorem 
(see for instance \cite{RoydenFitzpatrick2010}), 
$\nu$ can be extended to a Borel regular outer measure on $X$.

\emph{Step 2.} We transport 
$\nu$ to a measure $\mu$ on $G\cap\{y=t\}$. We consider the continuous map
\begin{equation*}
\begin{split}
    \Psi\colon X&\to G\cap\{y=t\}\\
    \sigma &\mapsto x_\sigma\coloneqq \bigcap_{k=1}^\infty J_{\sigma_1,\ldots,\sigma_k}.
\end{split}    
\end{equation*}
Therefore the measure 
\begin{equation*}
    \mu\coloneqq \Psi_{\#}(\nu)
\end{equation*}
is Borel regular and finite on $\R^n$, thus it is a Radon measure. We observe that $\mu$ is supported on $G\cap\{y=t\}$ and $\mu(G\cap\{y=t\})=\nu(X)=1$. For every $(\sigma_1,\ldots,\sigma_k)\in\{1,2,3\}^k$ the following inclusions holds
\begin{equation*}
    \Psi^{-1}(\text{int}(J_{\sigma_1,\ldots,\sigma_k}))\subseteq I_{\sigma_1,\ldots,\sigma_k}\subseteq \Psi^{-1}(J_{\sigma_1,\ldots,\sigma_k}).
\end{equation*}
Moreover, $\mu(\{x_\sigma\})=0$ for every $x_\sigma\in G\cap \{y=t\}$, which follows from \cite[Lemma~9.2]{Falconer2003} by taking the sets $V_i$ of the lemma to be the level-$k$ intervals, for each $k\in\N$. As a consequence,
\begin{equation*}
    \mu(\text{int}(J_{\sigma_1,\ldots,\sigma_k}))=\nu( I_{\sigma_1,\ldots,\sigma_k})=\mu(J_{\sigma_1,\ldots,\sigma_k})=3^{-k}.
\end{equation*}
In the proof of Theorem \ref{thm: rajala holder+} we will show that
\begin{equation}\label{eq: frostman for Jk}    \mu(J_{\sigma_1,\ldots,\sigma_k})\leq C\diam(J_{\sigma_1,\ldots,\sigma_k})^s
\end{equation}
for every $k\in \N^+$ and $(\sigma_1,\ldots,\sigma_k)\in\{1,2,3\}^k$.
We observe that \eqref{eq: frostman for Jk} implies that $\mu$ satisfies condition (ii) of Lemma~\ref{frostman lemma}. Indeed, by \cite[Lemma~9.2]{Falconer2003}, an arbitrary interval $J\subset\{y=t\}$ such that $b_{k+1}<\diam(J)\leq b_k$ intersects a bounded number $M_0$ of $k$-level intervals $J_{\sigma_1,\ldots,\sigma_k}$. Recalling that $b_{k}\leq3(m+1)b_{k+1}$ and using \eqref{eq: frostman for Jk} we obtain
\begin{equation*}
\begin{split}
\mu(J)&\leq M_0\mu(J_{\sigma_1,\ldots,\sigma_k})\leq CM_0 (b_k)^s\\ 
&\leq CM_03^s(m+1)^s (b_{k+1})^s\leq  CM_03^s(m+1)^s \diam(J)^s.
\end{split}
\end{equation*}

We now prove Theorem \ref{thm: rajala holder+}. The computations below rely on arguments similar to the proof of \cite[Proposition~2.4]{Rajala2026pr}, except for the choice of the function $\varphi(k)=\lfloor \rho k\rfloor$, with $\rho$ as in \eqref{eq: rho_alpha}.

\begin{proof}[Proof of Theorem~\ref{thm: rajala holder+}]
Fix $\alpha,\beta\in(0,1)$ and let $m\in\N^+$ such that
    \begin{equation*}
        \frac{\log m}{\log (3m)}\leq \alpha < \frac{\log (m+1)}{\log (3(m+1))}.
    \end{equation*}
    We define the function
    \begin{equation*}
        \varphi(k)=\lfloor\rho k + (1-\rho)k^{\beta^*}\rfloor,
    \end{equation*}
    where $\beta^*\in (\beta,1)$ and
    \begin{equation}\label{eq: rho_alpha} 
    \rho\coloneqq \frac{\alpha\log3-(1-\alpha)\log m}{(1-\alpha)(\log(m+1)-\log m)}\in (0,1).
    \end{equation}
    We observe that $\varphi$ is nondecreasing and satisfies $\varphi(k)\leq\varphi(k-1)+1$ for $k\in\N^+$. We argue as in subsection~\ref{subs: construction of g}, and denote the resulting function by $\psi$. First, we prove (ii). We fix $x,y\in[0,1]$ and denote by $k\in\N$ the unique integer such that
    \begin{equation*}
        b_{k+1}<|x-y|\leq b_k.
    \end{equation*}
    Then, for every $M>0$ we have
    \begin{equation*}
        e^{M|\log|x-y||^\beta}\leq e^{M\big(\!\log\left((3(m+1)
    )^{\varphi(k+1)}(3m)^{k+
        1-\varphi(k+1)}\right)\big)^\beta}\leq C e^{CMk^\beta}, 
    \end{equation*}
    where here and in the following $C>0$ denotes a constant depending only on $\alpha$ and $\beta$, whose value may change from line to line.
    On the other hand, since $|x-y|<b_k$, $\psi(x)$ and $\psi(y)$ belong to the same level-$k$ rectangle or to two adjacent level-$k$ rectangles. Therefore we have
    \begin{equation*}
    \begin{split}
        \frac{|\psi(x)-\psi(y)|}{|x-y|^\alpha}&\leq \frac{2h_k}{(b_{k+1})^\alpha}= 2\frac{\left(3(m+1))^{\varphi(k+1)}(3m)^{k+1-\varphi(k+1)}\right)^\alpha}{(m+1)^{\varphi(k)}m^{k-\varphi(k)}}\\
    &=2 e^{[\alpha(k+1-\varphi(k+1))-k+\varphi(k)]\log m+\alpha(k+1)\log 3+[\alpha\varphi(k+1)-\varphi(k)]\log (m+1)}\\
    &\leq C e^{(\alpha-1)(k-\varphi(k))\log m+\alpha k\log 3+(\alpha-1)\varphi(k)\log (m+1)}.
        \end{split}
    \end{equation*}
    Substituting the expression of $\varphi$ and exploiting the cancellations arising from the specific choice of $\rho$, we obtain
    \begin{equation*}
\begin{split}
     \frac{|\psi(x)-\psi(y)|}{|x-y|^\alpha}e^{M|\log|x-y||^\beta}\leq
        Ce^{-(1-\alpha)(1-\rho)(\log(m+1)-\log m)k^{\beta^*}+CMk^\beta}.
    \end{split}
\end{equation*}
    The right-hand side of the previous equation vanishes as $k\to\infty$, therefore it is equi-bounded in $k$. This proves (ii).

    To estimate the dimension of the level sets of $\psi$, we argue as in subsection~\ref{subs: frostman}. For every $\eps>0$, it suffices to verify \eqref{eq: frostman for Jk} with $s=1-\alpha-\eps$:
    \begin{equation*}
\begin{split}
\frac{\mu(J_{\sigma_1,\ldots,\sigma_k})}{\diam(J_{\sigma_1,\ldots,\sigma_k})^{1-\alpha-\eps}}&\leq 2 \frac{\left((3(m+1))^{\varphi(k)}(3m)^{k-\varphi(k)}\right)^{1-\alpha-\eps}}{3^{k}}\\
    &=2 e^{-k(\alpha+\eps)\log 3+\varphi(k)(1-\alpha-\eps)(\log(m+1)-\log m)+(1-\alpha-\eps)k\log m}\\
    &\leq 2e^{-k\alpha\log 3+\rho k(1-\alpha)(\log(m+1)-\log m)+(1-\alpha)k\log m}\\
    &\hspace{0.4cm}\cdot e^{-\eps k[\log3+ \log m+\rho(\log(m+1)-\log m)]+k^{\beta^*}(1-\rho)(1-\alpha-\eps)(\log(m+1)-\log m)}.
\end{split}
\end{equation*}
In the last inequality, the exponent of the first exponential vanishes identically by the definition of $\rho$, while the second exponential goes to zero as $k\to\infty$ since $\beta^*<1$. Thus \eqref{eq: frostman for Jk} holds and therefore property (i) follows.
\end{proof}

\subsection{Further results on level sets of H\"older functions}

To conclude the section, we provide a few more observations on level sets of H\"older continuous functions. 
The next remark, for $n=1$, shows that the functions constructed in \cite[Proposition~2.6]{Rajala2026pr} and in Theorem~\ref{thm: rajala holder+} have a.e.\ level set of maximal Hausdorff dimension equal to $1-\alpha$. Moreover, Lemma~\ref{lem: holder+ level set} shows that a.e.\ level set of the functions in Theorem~\ref{thm: rajala holder+} cannot have positive $(1-\alpha)$-dimensional Hausdorff measure.

\begin{remark}\label{rem: dim holder level set}
We recall that if $f\colon \Omega\to\R$ is an $\alpha$-H\"older continuous function, with $\Omega\subset\R^n$ of finite Lebesgue measure and $0<\alpha\leq 1$, then $\HH^{n-\alpha}(f^{-1}(t))<+\infty$ for a.e.\ $t\in\R$, see also \cite[Corollary 11.2]{Falconer2003} or \cite[Remark 3.8]{KirSer04}. This can be obtained by a proper use of
Eilenberg's inequality (Theorem~\ref{lemma:Eilenberg}), 
arguing as in the proof of the next lemma.
\end{remark}

\begin{lemma}\label{lem: holder+ level set}
If $0<\alpha\le1$, $\Omega\subset\R^n$ is open, and $\psi\colon \Omega\to\R$ satisfies
       \begin{equation}\label{eq: psi is h-holder}
            \lim_{r\to0}\sup_{\substack{0<|x-y|\leq r \\ x,y\in\Omega'}}\frac{|\psi(x)-\psi(y)|}{|x-y|^\alpha}=0,
        \end{equation}   
for every open subset $\Omega'\Subset\Omega$, then $\HH^{n-\alpha}(\psi^{-1}(t))=0$ 
for a.e.\ $t\in\R$.
\end{lemma}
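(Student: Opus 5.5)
We prove the statement with the Eilenberg inequality (Theorem~\ref{lemma:Eilenberg}), applied to $\psi$ as a map into $\R$. The little H\"older condition \eqref{eq: psi is h-holder} replaces the usual Lipschitz hypothesis. Since $\Omega$ is a countable union of open sets $\Omega'\Subset\Omega$ (say dyadic cubes with closure in $\Omega$), and a countable union of $\mathcal L^1$-null sets of levels is null, it suffices to prove $\HH^{n-\alpha}(\psi^{-1}(t)\cap\Omega')=0$ for a.e.\ $t$. Here $\Omega'$ is a fixed open cube with $\overline{\Omega'}\subset\Omega$. By the argument of Remark~\ref{rem: dim holder level set} we already know these measures are finite for a.e.\ $t$, so only the vanishing is new.

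The key step is a covering estimate at a fixed small scale. Put $\omega(r)=\sup\{|\psi(x)-\psi(y)|\,|x-y|^{-\alpha}: x,y\in\Omega',\,0<|x-y|\le r\}$, so $\omega(r)\to0$ by hypothesis. Fix $\delta>0$ and subdivide $\Omega'$ into about $C\delta^{-n}$ closed cubes $Q_i$ of side $\delta$, each of Euclidean diameter $\sqrt n\,\delta$. For each cube, $\psi(Q_i\cap\Omega')$ lies in an interval $I_i$ of length at most $\omega(\sqrt n\delta)(\sqrt n\delta)^\alpha$. For a level $t$, the set $\psi^{-1}(t)\cap\Omega'$ is covered by those $Q_i$ with $t\in I_i$. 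Hence
\[
\HH^{n-\alpha}_{\sqrt n\delta}\big(\psi^{-1}(t)\cap\Omega'\big)\le c_n\,\delta^{n-\alpha}\sum_i \chi_{I_i}(t).
\]
We integrate in $t$ using Fatou-type monotonicity, or simply integrate the bound for the approximating premeasure:
\[
\int_\R \HH^{n-\alpha}_{\sqrt n\delta}\big(\psi^{-1}(t)\cap\Omega'\big)\,dt\le c_n\,\delta^{n-\alpha}\sum_i|I_i|\le C\,\delta^{n-\alpha}\,\delta^{-n}\,\omega(\sqrt n\delta)\,\delta^\alpha=C\,\omega(\sqrt n\delta).
\]
The integral is an upper integral if measurability is an issue, exactly as in Theorem~\ref{lemma:Eilenberg}.

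To conclude, let $\delta\to0$. The functions $t\mapsto\HH^{n-\alpha}_{\sqrt n\delta}(\psi^{-1}(t)\cap\Omega')$ increase to $\HH^{n-\alpha}(\psi^{-1}(t)\cap\Omega')$. By monotone convergence for upper integrals, which holds for increasing sequences of nonnegative functions, we get
\[
\int^*_\R\HH^{n-\alpha}\big(\psi^{-1}(t)\cap\Omega'\big)\,dt\le \liminf_{\delta\to0}C\,\omega(\sqrt n\delta)=0.
\]
So the level-set measure vanishes for a.e.\ $t$.

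The main obstacle is the measurability and limit interchange in the last step. One can sidestep it by fixing $\eta>0$ and choosing $\delta_k\to0$. By the bound above, $\mathcal L^1(\{t:\HH^{n-\alpha}_{\sqrt n\delta_k}>\eta\})\le C\omega(\sqrt n\delta_k)/\eta$ in outer measure. Then $\{t:\HH^{n-\alpha}(\psi^{-1}(t)\cap\Omega')>\eta\}$ is contained in all but finitely many of the sets $\{t:\HH^{n-\alpha}_{\sqrt n\delta_k}>\eta\}$, whose outer measures tend to $0$. Therefore this set is null, and letting $\eta\downarrow0$ along a sequence completes the proof. The bookkeeping for this step is just the bound $\sum_i|I_i|\le C\delta^{\alpha-n}\omega(\sqrt n\delta)$, which is routine.
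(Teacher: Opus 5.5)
Your proof is correct, but it never actually invokes Theorem~\ref{lemma:Eilenberg}. The grid cover of $\Omega'$ by about $\delta^{-n}$ cubes, together with the Borel majorant $c_n\delta^{n-\alpha}\sum_i\chi_{I_i}(t)$ of $\HH^{n-\alpha}_{\sqrt n\delta}(\psi^{-1}(t)\cap\Omega')$, reproves by hand the special case of the coarea inequality you need.

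The paper argues differently. It passes to the snowflake distance $\rho(x,y)=|x-y|^\alpha$ on $\Omega'$. In that metric the little H\"older hypothesis says exactly that $\psi$ is $\eps$-Lipschitz on each piece $E_j$ of a finite partition of $\Omega'$ into small sets. The Eilenberg inequality with exponents $n/\alpha$ and $1$ then gives $\int_\R\widetilde\HH^{n/\alpha-1}(\psi^{-1}(t)\cap\Omega')\,dt\le C\eps\,\widetilde\HH^{n/\alpha}(\Omega')$. The relation $\widetilde\HH^{\beta}=c\,\HH^{\alpha\beta}$ converts this into $\int_\R\HH^{n-\alpha}(\psi^{-1}(t)\cap\Omega')\,dt\le C\eps\,\HH^n(\Omega')$.

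The two routes buy different things:
\begin{itemize}
\item The paper's route is shorter once the black box is available. It only uses finiteness of $\HH^n(\Omega')$, not a box-counting bound like your $\delta^{-n}$ count of cubes, so it transfers directly to other metric domains, for instance with a homogeneous distance.
\item Your route is elementary and self-contained. It handles measurability through explicit simple-function majorants rather than the measurability clause of the Eilenberg theorem. It also yields the quantitative estimate $\int^*_\R\HH^{n-\alpha}_{\sqrt n\delta}(\psi^{-1}(t)\cap\Omega')\,dt\le C\,\omega(\sqrt n\delta)$.
\end{itemize}

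Two cosmetic points:
\begin{itemize}
\item Open dyadic cubes with closure in $\Omega$ do not cover $\Omega$, since they miss their boundaries. Use open cubes with rational centers and side lengths instead.
\item In your last paragraph, the sets $A_k=\{t:\HH^{n-\alpha}_{\sqrt n\delta_k}(\psi^{-1}(t)\cap\Omega')>\eta\}$ increase with $k$, so ``contained in all but finitely many'' is imprecise. The clean statement is that each $A_k$ is null, because its outer measure is at most $C\omega(\sqrt n\delta_j)/\eta$ for every $j\ge k$, and the bad set is their union. In any case, your monotone convergence step for upper integrals already finishes the proof.
\end{itemize}
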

    \begin{proof}
We choose a nonempty open set $\Omega'\Subset\Omega$ and fix $\eps>0$.
Then we can find a finite cover 
$\{B_i:\,1\le i\le N\}$ of $\overline{\Omega'}$, where $B_i$ are open Euclidean balls of suitably small radius, such that the Lipschitz constant
of $\psi|_{B_i\cap\Omega'}$ with respect to the metric $\rho(x,y)=|x-y|^\alpha$
is less than $\eps$.
For each $\beta>0$, we denote by $\widetilde\HH^\beta$ the Hausdorff measure with respect to $\rho$ and observe that $\widetilde \HH^\beta=\frac{\omega_\beta}{\omega_{\beta\alpha}}\HH^{\beta\alpha}$.
We can easily find a finite measurable partition $\{E_j:\,1\le j\le N_1\}$
of $\Omega'$, starting from the measurable subsets $B_i\cap\Omega'$, such that all the Lipschitz constants of $\psi|_{E_i}$ are less than $\eps$.
By Theorem~\ref{lemma:Eilenberg} with unitary gauges, it follows that
    \begin{equation*}
     \begin{split}
     &\int_\R \widetilde\HH^{\frac{n}{\alpha}-1}(\psi^{-1}(t)\cap\Omega')\,dt)
     =\sum_{i=1}^{N_1}\int_\R \widetilde\HH^{\frac{n}{\alpha}-1}(\psi^{-1}(t)\cap E_i)dt\\
     &\leq 
     \frac{\omega_{\frac{n}{\alpha}-1} \, \omega_1}{\omega_\frac{n}{\alpha}}
     \sum_{i=1}^N\eps\, \widetilde\HH^{\frac{n}{\alpha}}(E_i)\leq
     \frac{\omega_{\frac{n}{\alpha}-1} \, \omega_1}{\omega_\frac{n}{\alpha}}
     \eps\,\widetilde\HH^{\frac{n}{\alpha}}(\Omega')=\frac{\omega_{\frac{n}{\alpha}-1} \, \omega_1}{\omega_n}\eps\,\HH^n(\Omega').
        \end{split}
        \end{equation*}
The arbitrary choice of $\eps>0$ concludes the proof.
\end{proof}

\bibliographystyle{abbrv}
\bibliography{References}
\end{document}